\documentclass[12pt]{article}
\usepackage[margin=1in]{geometry}
\usepackage{amsmath,amssymb,amsthm,array,colortbl,rotating,graphicx}
\usepackage{color}
\usepackage{natbib}
\usepackage[colorlinks]{hyperref}
\usepackage{multicol,multirow,enumitem,booktabs,mathtools}
\usepackage{microtype}
\usepackage{bm}
\usepackage{tikz}
\usetikzlibrary{arrows,fit}

\def\highlightchangessince{62}

\definecolor{changecolor}{RGB}{0,170,255}
\def\bct#1#2\ect#3{\ifnum#1>\highlightchangessince\textcolor{changecolor}{\ignorespaces#2\unskip}\else\ignorespaces#2\unskip\fi}
\newcounter{changeversion}\newcounter{outerchangeversion}
\def\bc#1{\ifnum\thechangeversion>0\setcounter{outerchangeversion}{\thechangeversion}\setcounter{changeversion}{#1}\else\setcounter{changeversion}{#1}\fi\ifnum#1>\highlightchangessince\color{changecolor}\fi\ignorespaces}
\def\ec#1{\unskip\ifnum\theouterchangeversion>0\setcounter{changeversion}{\theouterchangeversion}\setcounter{outerchangeversion}{0}\else\setcounter{changeversion}{0}\fi\ifnum\thechangeversion>\highlightchangessince\else\normalcolor\fi}

\let\originalleft\left
\let\originalright\right
\renewcommand{\left}{\mathopen{}\mathclose\bgroup\originalleft}
\renewcommand{\right}{\aftergroup\egroup\originalright}

\def\bas#1\eas{\begin{align*}#1\end{align*}}
\def\basn#1\easn{\begin{align}#1\end{align}}

\newtheorem{thm}{Theorem}
\newtheorem{cor}[thm]{Corollary}
\newtheorem{lem}[thm]{Lemma}
\theoremstyle{definition}
\newtheorem{defn}[thm]{Definition}
\newtheorem{ex}[thm]{Example}
\newtheorem{assum}[thm]{Assumption}
\newtheorem{rem}[thm]{Remark}

\numberwithin{thm}{section}
\numberwithin{equation}{section}

\newlength\minuswidth
\settowidth{\minuswidth}{${}-{}$}
\newcommand{\wideplus}{\makebox[\minuswidth]{${}+{}$}}

\makeatletter\renewcommand{\paragraph}{\@startsection{paragraph}{4}%
  {\z@}{1.5ex \@plus .2ex}{-1em}%
  {\normalfont\normalsize\bfseries}}\makeatother

\newcommand{\SADS}{\SwapAboveDisplaySkip}

\widowpenalty10000
\clubpenalty10000




\newcommand{\T}{\mathrm{\scriptscriptstyle T}}
\renewcommand{\epsilon}{\varepsilon}
\newcommand{\given}{\mathbin\vert}
\newcommand{\Given}{\:\middle\vert\:}



\DeclareMathOperator{\Var}{Var}

\hyphenation{Sereewattanawoot}
\hyphenation{Appendix}

\begin{document}

\title{%
From Random Walks to Random Leaps: Generalizing Classic Markov Chains for Big Data Applications
}

\date{\today}
\author{Bala Rajaratnam, Narut Sereewattanawoot,\\Doug Sparks, and Meng-Hsuan Wu}

\maketitle

\begin{abstract}\noindent
Simple random walks are a basic staple of the foundation of probability
theory and form
the
building block of many useful and complex stochastic
processes. In this paper we
study
a natural
generalization of the random walk to a process in which the allowed step
sizes take values in the set $\{\pm1,\pm2,\ldots,\pm k\}$, a process we
call a \emph{random leap}. The need to analyze such models arises naturally
in modern-day
data science and so-called ``big data''
applications. We provide closed-form expressions for
quantities associated with first passage times and absorption events of
random leaps.
These expressions are formulated in terms of the roots of the characteristic polynomial of a certain recurrence relation associated with the transition probabilities.
Our analysis shows that the expressions for absorption
probabilities for the classical simple random walk are a special case of a
universal
result that is very elegant.
We also consider an important variant of a random
leap: the reflecting random leap. We demonstrate that the reflecting
random leap exhibits more interesting behavior in regard to the
existence of a stationary distribution and properties thereof. Questions
relating to recurrence/transience are also addressed, as well as an
application of the random leap.

\medskip\noindent
\emph{Keywords:} Markov chain; random walk; random leap; recurrence relation; characteristic polynomial; determinant


\end{abstract}

\section{Introduction}
\label{sec:intro}

The random walk is among the most ubiquitous and thoroughly studied of all stochastic processes.  Its analytically tractable structure and extraordinary applicability in fields as diverse as
genetics, the environmental and geo-sciences,
economics,
computer science, and psychology have made it an appealing target of research.
Moreover, its accessibility and the existence of closed-form solutions
for important quantities of interest
have made it a customary example in courses and texts on stochastic processes.
It is therefore natural to ask how the random walk can be extended, and to ask how far such extensions can be pursued while still retaining tractability.  One immediately logical extension is to allow steps of
sizes in the set $\{\pm1,\pm2,\ldots,\pm k\}$ for some integer~$k$, which we call the 
\emph{random leap}.

Our motivation for considering random leaps is
fourfold.
First, there exists the perception that the random walk itself is the only model of its type for which a variety of quantities and results are available in closed form.
We show that the random walk is actually a special case of a larger class of stochastic processes in which tractable solutions exist for a collection of problems.  Second, in modern
high-dimensional statistics, computer science, machine learning,
data science,
and allied fields,
an important goal is to
parsimoniously explain observed data with a probabilistic model that uses only a subset of all available
features/covariates.
As the number of possible
candidate models that can ``fit'' the data
may be extremely large (due to the sheer size of modern ``big data'' applications), a model is often selected through a stochastic exploration of model
space.
If such a process is permitted to add or remove only one
variable/feature
at a time, then it may be conceptualized as a random walk on model space, where models that differ only through the inclusion or exclusion of a single variable are considered adjacent.  However, the model space may be searched much more efficiently if the process can add or remove several variables at once, i.e., if it is permitted to move directly to non-adjacent models \citep[see, e.g.,][and the references therein]{hans2007}.  Such a process is a random leap on model space.
Analysis of the one-dimensional random leap on the integers is thus a natural first step toward understanding the behavior of these more flexible procedures for exploring very large model spaces.
%
%
Third, our results and their method of proof exhibit 
important
connections to other branches of mathematics, particularly algebra, through their use of concepts such as recurrence relations, characteristic polynomials, and determinants.
Fourth and finally, there
is also some degree of elegance to be found in the demonstration that familiar results
for the classical random walk
are in fact special cases of
a more
universal
phenomenon.

The
outline of this paper is as follows.
In Section~\ref{sec:srw}, we first compare a standard result for random walks to corresponding results for the two-step case of the random leap, which we call the \emph{random jump}.  This section both provides intuition and demonstrates the beginnings of a pattern. 
Section~\ref{sec:leap} begins by defining the random leap in general and discusses its connections to other models.  We then derive closed-form expressions for both absorption probabilities and expected absorption times for a random leap with absorbing barriers.
Section~\ref{sec:rtsd} considers the related issues of recurrence/transience and stationary distributions for random leaps on~$\mathbb{Z}$.  Section~\ref{sec:reflect} examines an important variant
termed
the \emph{reflecting random leap}.  This section establishes results on the existence and form of a stationary distribution for models with both a finite and an infinite state space.  Section~\ref{sec:example} gives a concrete application of the random jump theory to the casino game of roulette, and Section~\ref{sec:concl} states some concluding remarks.
Table~\ref{tab:summary} below summarizes our main theoretical results and their locations in the paper.

\begin{table}[htbp]
\centering
\begin{tabular}{lccc}\toprule
&\multicolumn{3}{c}{Random Leap Type}\\\cmidrule(l){2-4}
&&\multicolumn{2}{c}{Reflecting}\\\cmidrule(l){3-4}
\multicolumn{1}{c}{Result Type}&Simple&One-Sided&Two-Sided\\\midrule
Absorption Probabilities&Theorem~\ref{thm:u}&n/a&n/a\\
Expected Absorption Times&Theorem~\ref{thm:v}&n/a&n/a\\
Stationary Distribution&Theorem~\ref{thm:rtsd}\phantom0&Theorem~\ref{thm:pi-limit}&Theorem~\ref{thm:pi}\\\bottomrule
\end{tabular}
\caption{Location of main theoretical results and their location within the paper.  Note that some results for some types of random leap are trivial or not applicable~(n/a).  Also note that although Theorem~\ref{thm:rtsd} proves the nonexistence of a stationary distribution for the simple random leap from first principles, the result is a special case of a more general result of \citet{chung1951}.}
\label{tab:summary}
\end{table}

\section{Random Walks and Random Jumps}
\label{sec:srw}

Our ultimate goal is to elucidate the behavior of a full extension of the simple random walk to an arbitrary maximum step size~$k$ in each direction.  However, some insight may first be gained by considering the simple random walk
itself and
its extension to the simple case where $k=2$.

\subsection{Established Results for Random Walks}
\label{subsec:walk}

We begin by summarizing some standard results on random walks to provide context against which to compare our subsequent findings for random leaps.  We state these results without proof since they may be found in typical introductory texts on probability and stochastic processes \citep[e.g.,][]{feller1968,karlin1998} and since they will moreover follow as special cases of more general results to be presented later.

First, for any discrete-time Markov chain $\{X_n:n\ge0\}$ with state space $\{0,\ldots,N\}$ for which $0$ and $N$ are absorbing barriers, let $u_i=P(X_T=N\given X_0=i)$ be the upper absorption probability from state~$i$,
where $T=\min\{n\ge0:X_n=0\text{ or }X_n=N\}$.  
Now specifically take $\{X_n:n\ge0\}$ to be the absorbing random walk on $\{0,\ldots,N\}$ that takes rightward steps with probability $p$ and leftward steps with probability $1-p$.

Note that the construction described above can be alternatively interpreted in terms of first passage times for a simple (non-absorbing) random walk on~$\mathbb{Z}$.  Let $\{X_n^\star:n\ge0\}$ be such a simple random walk.  Now define the sets $S_0=\{m\in\mathbb{Z}:m\le0\}$ and
$S_{
N
}=\{m\in\mathbb{Z}:m\ge N\}$,
and let $T^\star_{S_0}=\min\{n\ge0:X_n^\star\in S_0\}$ and $T^\star_{S_N}=\min\{n\ge0:X_n^\star\in S_N\}$ denote the first passage times for these sets.  Then $u_i=P(T^\star_{S_N}<T^\star_{S_0})$.

Regardless of how the quantities $u_0,u_1,\ldots,u_N$ are interpreted, their form for the random walk is provided by the following lemma.

\begin{lem}[e.g., \citeauthor{feller1968}, \citeyear{feller1968}; \citeauthor{karlin1998}, \citeyear{karlin1998}]\label{lem:u-1}
For the absorbing simple random walk
with $0<p<1$,
the upper absorption probabilities
are
\[
u_i=\frac{\sum_{j=1}^iz_1^j}{\sum_{j=1}^Nz_1^j},
\]
where $z_1=(1-p)/p$.
\end{lem}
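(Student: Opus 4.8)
The plan is to use first-step analysis to turn the problem into a linear recurrence in the index~$i$, solve that recurrence through its characteristic polynomial, and then check that the two cases (distinct roots versus a repeated root) both collapse into the single displayed formula. First I would condition on the first step of the chain: for $1\le i\le N-1$, the Markov property yields
\[
u_i = p\,u_{i+1} + (1-p)\,u_{i-1},
\]
subject to the boundary conditions $u_0=0$ and $u_N=1$, which hold because $0$ and $N$ are absorbing. This is a second-order homogeneous linear recurrence with constant coefficients, so its solution space is governed by the roots of the associated characteristic polynomial.

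Next I would solve the recurrence. The characteristic equation $p z^2 - z + (1-p) = 0$ factors as $(z-1)\bigl(p z - (1-p)\bigr)=0$, with roots $z=1$ and $z_1 = (1-p)/p$. When $p\neq\tfrac12$ these roots are distinct, the general solution is $u_i = A + B z_1^i$, and imposing $u_0=0$, $u_N=1$ forces $A=-B$ and $B(z_1^N-1)=1$, so $u_i = (z_1^i-1)/(z_1^N-1)$. When $p=\tfrac12$ we have $z_1=1$, a repeated root, the general solution is $u_i = A + Bi$, and the boundary conditions give $u_i = i/N$. Finally, I would merge the two cases: since $\sum_{j=1}^m z_1^j = z_1(z_1^m-1)/(z_1-1)$ whenever $z_1\neq1$, the ratio $\bigl(\sum_{j=1}^i z_1^j\bigr)\big/\bigl(\sum_{j=1}^N z_1^j\bigr)$ simplifies to $(z_1^i-1)/(z_1^N-1)$, matching the $p\neq\tfrac12$ case, while for $z_1=1$ the numerator and denominator become $i$ and $N$, matching the $p=\tfrac12$ case. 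Hence the single formula in the statement is valid for all $0<p<1$.

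The step I expect to be the only genuine subtlety is the well-posedness of first-step analysis, i.e.\ arguing that the recurrence plus boundary conditions \emph{determine} $u_i$ rather than merely being satisfied by it. This needs $P(T<\infty)=1$, which for the finite chain is immediate since from any non-absorbing state there is a path of length at most~$N$ reaching a barrier with positive probability, so absorption is certain and the bounded solution of the boundary value problem is unique. Beyond that, the repeated-root case $p=\tfrac12$ has to be treated separately before being folded back into the unified expression, but this is routine bookkeeping rather than a real obstacle. (This lemma will in any case reappear as the $k=1$ instance of Theorem~\ref{thm:u}, whose proof handles the general characteristic-polynomial structure.)
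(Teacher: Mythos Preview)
Your argument is correct. The paper does not actually prove this lemma directly: it cites standard references and later recovers it as the $k=1$ case of Theorem~\ref{thm:u}. The underlying method there differs slightly from yours. You work with the second-order recurrence for $u_i$ itself, whose characteristic equation has roots $1$ and $z_1$, and you must split into the distinct-root case $p\ne\tfrac12$ and the repeated-root case $p=\tfrac12$ before unifying. The paper instead passes to the differences $d_i=u_i-u_{i-1}$, which satisfy the first-order recurrence $d_i=z_1\,d_{i-1}$ with the single root~$z_1$; the sum $\sum_{j=1}^i z_1^j$ then arises as the telescoping $u_i=\sum_{j\le i}d_j$, and no case split is needed. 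Your route is the more familiar textbook derivation, while the differences approach is what generalizes cleanly to the $k$-step random leap, where the recurrence for $d_i$ has order $k_p+k_q-1$ rather than $k_p+k_q$.
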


It is clear that the
upper absorption probabilities~$u_i$
form a sequence that increases from $0$ to $1$ as $i$ increases from $0$ to~$N$, but the form of the quantity $z_1$ is also of interest.  It can be shown that the differences $d_i=u_i-u_{i-1}$ are related by the recurrence relation $d_i=[(1-p)/p]d_{i-1}$ for $i\ge2$.
The characteristic polynomial for this recurrence relation is $\chi^\star(z)=z-(1-p)/p$, which has root $(1-p)/p=z_1$.  Thus, $u_i$ may be alternatively formulated as
\basn\SADS\label{u-det-srw}
u_i=\frac{\det(\bm{A}_{\mathrlap{\,i}\hphantom{N}}\bm{Z})}{\det(\bm{A}_N\bm{Z})},
\easn
where $\bm{A}_i=(\bm{1}_i^\T\;\;\bm{0}_{N-i}^\T)$ is a $1\times N$
row vector
and $\bm{Z}=(z_1\;\;z_1^2\;\;\cdots\;\;z_1^N)^\T$ is an $N\times1$
column vector,
noting that the determinant coincides with the identity function for scalar-valued arguments.  The motivation for
the
alternative expression
in~(\ref{u-det-srw})
will become clear in the next subsection.

\subsection{Random Jumps (Two Steps)}
\label{subsec:jump}

It is natural to ask how the results for the random walk might extend to a Markov chain that can take steps of size $1$ or $2$ in each direction.  We begin by defining terminology
for
such stochastic processes.

\begin{defn}
Let
$p_1,p_2,q_1,q_2$
be nonnegative real numbers such that
$p_1+p_2+q_1+q_2=1$.
Now consider an integer-valued Markov chain $\{X_n:n\ge0\}$ with transition probabilities for every $n\ge0$ given by
\bas\SADS
P\left(X_{n+1}=j\Given X_n=i\right)=\begin{cases}
p_{j-i}&\text{ if }i\wideplus1\le j\le i\wideplus2,\\
q_{i-j}&\text{ if }i-2\le j\le i-1,\\
0&\text{ if }|i-j|>2\text{ or }i=j.
\end{cases}
\eas
We call this Markov chain a \emph{simple random jump}.
\end{defn}

Note that we use the term ``jump'' since the chain is permitted to to ``jump'' over an adjacent state to reach a state two units away in a single time step.
We now proceed to define an absorbing random jump.

\begin{defn}
Let $\{X_n^\star:n\ge0\}$ be a simple random jump, and let $N\ge4$ be an integer.  Now let $T^\star=\min\{n\ge0:X_n^\star\le0\text{ or }X_n^\star\ge N\}$, and define a Markov chain $\{X_n:n\ge0\}$ by
\begin{align*}
X_n=\begin{cases}
X_n^\star &\text{ if }n<T^\star,\\
0 &\text{ if }n\ge T\text{ and }X^\star_{T^\star}\le0,\\
N &\text{ if }n\ge T\text{ and }X^\star_{T^\star}\le N.
\end{cases}
\end{align*}
We call the
above Markov chain~$X_n$
an \emph{absorbing random jump}.
\end{defn}

Note that the absorbing random jump is absorbed when it would otherwise ``jump past'' the barrier (i.e., when it would otherwise move from state~$1$ to state~$-1$, ``jumping past'' the barrier at~$0$, or when it would otherwise move from state $N-1$ to state $N+1$, ``jumping past'' the barrier at~$N$).

The analysis of the upper absorption probabilities for the random jump parallels that for the random walk.
Suppose $p_2$ and $q_2$ are both nonzero.  Then it
can again be shown that the differences $d_i=u_i-u_{i-1}$ satisfy a recurrence relation for $i\ge3$.  However, this recurrence relation is now of order~$3$ and has characteristic polynomial $\chi^\star(z)=[p_2z^3+(p_1+p_2)z^2-(q_1+q_2)z-q_2]/p_2$.  Let $z_1$, $z_2$, and $z_3$ denote the roots of this polynomial.
\begin{rem}
It can be shown that when $p_2$ and $q_2$ are both nonzero, these roots are distinct and nonzero, though we do not prove this fact here.
\end{rem}
Then the following result can be shown.
\begin{thm}\label{thm:u-2}
For the absorbing random jump
with $p_2$ and $q_2$ both nonzero,
the upper absorption probabilities are
\[
u_i=\frac{
(z_2^{N+1}-z_3^{N+1})\sum_{j=1}^i z_1^j+(z_3^{N+1}-z_1^{N+1})\sum_{j=1}^i z_2^j+(z_1^{N+1}-z_2^{N+1})\sum_{j=1}^i z_3^j
}
{
(z_2^{N+1}-z_3^{N+1})\sum_{j=1}^N z_1^j+(z_3^{N+1}-z_1^{N+1})\sum_{j=1}^N z_2^j+(z_1^{N+1}-z_2^{N+1})\sum_{j=1}^N z_3^j
}.
\]
\end{thm}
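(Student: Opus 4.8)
The plan is to derive a difference equation for the $u_i$ by first-step analysis, reduce it to a homogeneous linear recurrence of order three for the consecutive differences $d_i=u_i-u_{i-1}$, pin down the three constants in the general solution of that recurrence from three scalar side conditions, and solve the resulting $3\times 3$ system by Cramer's rule.

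First I would record the difference equation. Conditioning on the first step from an interior state $i$ with $1\le i\le N-1$, and using the absorption rule — from state~$1$ a leftward leap of size~$2$ would reach $-1\le 0$ and is absorbed at~$0$, and from state~$N-1$ a rightward leap of size~$2$ would reach $N+1\ge N$ and is absorbed at~$N$ — one obtains
\[
u_i=p_1u_{i+1}+p_2u_{i+2}+q_1u_{i-1}+q_2u_{i-2},\qquad 1\le i\le N-1,
\]
where it is convenient to adopt the conventions $u_{-1}:=u_0:=0$ and $u_{N+1}:=u_N:=1$. Standard Markov chain theory shows that $\{u_i\}$ is the unique bounded solution: from any non-absorbed state the underlying random jump makes $N$ consecutive rightward moves, forcing absorption by time~$N$, with probability at least $(p_1+p_2)^N>0$ (recall $p_2\ne 0$), so absorption is almost sure with finite expectation, and the usual optional-stopping/bounded-martingale argument then gives uniqueness.

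Next I would pass to the differences $d_i=u_i-u_{i-1}$. Writing $u_i=(p_1+p_2+q_1+q_2)u_i$ and re-expressing the neighbour differences in terms of the $d$'s converts the difference equation into the homogeneous recurrence
\[
p_2d_j+(p_1+p_2)d_{j-1}-(q_1+q_2)d_{j-2}-q_2d_{j-3}=0,\qquad 3\le j\le N+1,
\]
whose characteristic polynomial is exactly $\chi^\star$; the conventions above also give $d_0=u_0-u_{-1}=0$ and $d_{N+1}=u_{N+1}-u_N=0$. By the Remark above, $\chi^\star$ has three distinct nonzero roots $z_1,z_2,z_3$, so $(z_1^j),(z_2^j),(z_3^j)$ form a basis of the three-dimensional solution space and $d_j=c_1z_1^j+c_2z_2^j+c_3z_3^j$ for unique constants $c_1,c_2,c_3$. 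Since $u_0=0$ we have $u_i=\sum_{j=1}^id_j$, so $\sum_{j=1}^Nd_j=u_N=1$; writing the general solution this way, the three conditions $d_0=0$, $d_{N+1}=0$, $\sum_{j=1}^Nd_j=1$ become the linear system
\[
\begin{pmatrix}
1&1&1\\
z_1^{N+1}&z_2^{N+1}&z_3^{N+1}\\
\sum_{j=1}^Nz_1^j&\sum_{j=1}^Nz_2^j&\sum_{j=1}^Nz_3^j
\end{pmatrix}
\begin{pmatrix}c_1\\c_2\\c_3\end{pmatrix}=\begin{pmatrix}0\\0\\1\end{pmatrix}.
\]
Because $\{u_i\}$, hence $(d_0,d_1,d_2)$, hence $(c_1,c_2,c_3)$, is uniquely determined, the coefficient matrix is nonsingular; write $\Delta$ for its determinant.

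Finally I would apply Cramer's rule. With right-hand side $(0,0,1)^{\T}$, each $c_m$ is the cofactor of the bottom-row entry in column~$m$ divided by $\Delta$, and evaluating the $2\times 2$ minors gives $c_1=(z_3^{N+1}-z_2^{N+1})/\Delta$, $c_2=(z_1^{N+1}-z_3^{N+1})/\Delta$, $c_3=(z_2^{N+1}-z_1^{N+1})/\Delta$. Substituting these into $u_i=\sum_{m}c_m\sum_{j=1}^iz_m^j$ and using the condition $u_N=1$ to write $\Delta$ itself in the same symmetric form, the factors of $\Delta$ cancel; multiplying numerator and denominator by $-1$ yields precisely the stated expression. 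I expect the two genuinely delicate points to be (i) getting the boundary conventions at states~$1$ and~$N-1$ exactly right, so that the single order-three recurrence holds uniformly and the extra conditions $d_0=d_{N+1}=0$ arise correctly — this is where the ``jump past the barrier'' absorption rule is essential — and (ii) invoking uniqueness of the first-passage problem to guarantee $\Delta\ne 0$, so that Cramer's rule applies; the minor evaluations and the final cancellation are routine. As an alternative one could skip the derivation and simply verify that the stated expression satisfies $u_0=0$, $u_N=1$, and the first-step relations, using that each partial sum $\sum_{j=1}^iz_m^j$ satisfies the recurrence away from the left barrier and leaves a residue proportional to $q_2$ at $i=1$ that vanishes once summed against the coefficients $z_2^{N+1}-z_3^{N+1}$, $z_3^{N+1}-z_1^{N+1}$, $z_1^{N+1}-z_2^{N+1}$, whose sum is~$0$.
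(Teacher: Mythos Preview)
Your proof is correct and follows essentially the same approach as the paper. The paper does not prove Theorem~\ref{thm:u-2} directly but obtains it as a corollary of the general Theorem~\ref{thm:u}; however, your first-step analysis, passage to the order-three recurrence for $d_i=u_i-u_{i-1}$ with characteristic polynomial $\chi^\star$, boundary conditions $d_0=0$, $d_{N+1}=0$, $\sum_{j=1}^N d_j=1$, and resolution via Cramer's rule are exactly the $k_p=k_q=2$ specialization of that general argument, and your nonsingularity justification (uniqueness of the absorption probabilities forces a unique $(c_1,c_2,c_3)$) is the same in spirit as the paper's.
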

It
will be seen in Theorem~\ref{thm:u} that the above result is
compactly
stated as
\basn\label{u-det-srj}
u_i&=\begin{vmatrix}1&1&1\\ \sum_{j=1}^N z_1^j&\sum_{j=1}^N z_2^j&\sum_{j=1}^N z_3^j\\ z_1^{N+1}&z_2^{N+1}&z_3^{N+1}\end{vmatrix}^{-1}\begin{vmatrix}1&1&1\\ \sum_{j=1}^i z_1^j&\sum_{j=1}^i z_2^j&\sum_{j=1}^i z_3^j\\ z_1^{N+1}&z_2^{N+1}&z_3^{N+1}\end{vmatrix}
=\frac{\det(\bm{A}_{\mathrlap{\,i}\hphantom{N}}\bm{Z})}{\det(\bm{A}_N\bm{Z})},
\easn
where $|\,\cdot\,|$ denotes the determinant, $\bm{Z}$ is the $(N+2)\times3$ matrix with $(i,j)$th element $Z_{i,j}=z_j^{i-1}$, and
\begin{align*}\SADS
\bm{A}_i=\begin{pmatrix}1&\bm{0}^\T_i&\bm{0}_{N-i}^\T&0\\0&\bm{1}_i^\T&\bm{0}_{N-i}^\T&0\\0&\bm{0}_i^\T&\bm{0}_{N-i}^\T&1\end{pmatrix},
\end{align*}
where $\bm{1}_i$ denotes a length-$i$ vector of
ones.
Thus, the determinant-based result in~(\ref{u-det-srj}) for the simple random jump above may be seen as a generalization of the analogous determinant-based result in~(\ref{u-det-srw}) for the simple random walk.  The differences are that (i)~the matrix~$\bm Z$ has two additional columns corresponding to the two additional roots of the characteristic polynomial and (ii)~the matrices~$\bm A_i$ have two additional rows, each of which is simply a unit row vector.
We will directly generalize such concepts to larger values of the maximum step size~$k$ in the next section.

\section{Random Leaps: First
Passage
Times and \\Absorption Events}
\label{sec:leap}

We now extend the results for random jumps to Markov chains that can take up to $k$~steps in either direction, where $k$ can be any integer.
We call such Markov chains \emph{random leaps}.
After first defining some preliminaries in Subsection~\ref{subsec:leap-pre}, we discuss the connections between absorbing random leaps and other types of models in Subsection~\ref{subsec:leap-graph}.  We then derive upper absorption probabilities in Subsection~\ref{subsec:leap-u} and expected absorption times in Subsection~\ref{subsec:leap-v}.

The basic approach employed throughout this section begins by showing that the sequence of differences of each quantity of interest obey linear recurrence relations.
The roots of the characteristic polynomials of these recurrence relations form a key component of our eventual closed-form solutions, and the resulting expressions suggest connections to algebraic varieties.
Another key feature of our results is that they involve only matrices that are much smaller than the full transition matrix of the Markov chain.  In particular, the matrices that appear in all of our results are of dimension $2k\times2k$ or smaller.

In particular, one of our key results in this section (Theorem~\ref{thm:u}) shows that the determinant-based interpretation of Theorem~\ref{thm:u} in~(\ref{u-det-srj}) directly generalizes to any
arbitrary
maximum step size~$k>0$.
For larger~$k$, the recurrence relation associated with the differences~$d_i$ will be of higher order, and hence there will be more roots and accordingly more columns in the matrix~$\bm{Z}$.  However, these cases will also require more boundary conditions, which will introduce a corresponding additional number of rows at the top and bottom of the $\bm{A}_i$ matrices.  These notions will be made clear by the statement and proof of Theorem~\ref{thm:u}.

\subsection{Preliminaries}
\label{subsec:leap-pre}

We begin by formally defining the simple random leap on the integers.

\begin{defn}\label{defn:leap}
Let $k\ge1$, and let $p_1,\ldots,p_k,q_1,\ldots,q_k$ be nonnegative real numbers such that $\sum_{j=1}^k(p_j+q_j)=1$.
A \emph{simple random leap} is
an integer-valued Markov chain $\{X_n:n\ge0\}$ with transition probabilities for every $n\ge0$ given by
\begin{align*}
P\left(X_{n+1}=j\Given X_n=i\right)=\begin{cases}
p_{j-i}&\text{ if }i\wideplus1\le j\le i\wideplus k,\\
q_{i-j}&\text{ if }i-k\le j\le i-1,\\
0&\text{ if }|i-j|>k \text{ or }i=j.
\end{cases}
\end{align*}
\end{defn}

Note that we use the term ``leap'' since the chain is permitted to ``leap'' over as many as $k-1$ adjacent states to reach a state as many as $k$ units away in a single time step.

\begin{rem}
\citet{cong1982}
considered the special case of such a process where a random walk is simply permitted to have unequal integer-valued step sizes in the positive and negative directions
(i.e., the chain moves $k_1$ units to the right with probability~$p$ and moves $k_2$~units to the left with probability $q=1-p$, where $k_1$ and~$k_2$ are positive integers that need not be equal).
However, it is clear that the  random leap as defined above subsumes both the simple random walk and the model of \citet{cong1982} as very special cases.
\end{rem}

\begin{rem}
We take $P(X_{n+1}=i\given X_n=i)=0$ in Definition~\ref{defn:leap} purely for notational convenience, as the theory may be generalized to allow $P(X_{n+1}=i\given X_n=i)=r\ge0$ with $\sum_{j=1}^k(p_j+q_j)+r=1$.  However, this slight generalization may be straightforwardly analyzed in terms of the original random leap of Definition~\ref{defn:leap} using the standard theory of so-called ``lazy'' Markov chains.
\end{rem}

We will also impose the following assumption throughout the remainder of the paper.

\begin{assum}\label{assum:not-monotone}
$\sum_{j=1}^k p_j>0$ and $\sum_{j=1}^k q_j>0$.
\end{assum}

Assumption~\ref{assum:not-monotone} merely avoids the possibility of a chain that tends monotonically to~$\pm\infty$ with probability~$1$.  With this assumption in place, we now define
the maximum step sizes in the two directions as
$k_p=\max\{k:p_k>0\}$ and $k_q=\max\{k:q_k>0\}$.

The behavior of the simple random leap in regard to recurrence/transience and stationary distributions is discussed in Section~\ref{sec:rtsd}.
We first study absorption probabilities.  To this end, we consider the \emph{absorbing random leap}.
Specifically, we consider a random leap with absorbing barriers at each end of some finite state space $\{0,\ldots,N\}$, which we define as follows.

\begin{defn}\label{defn:absorb}
Let $\{X_n^\star:n\ge0\}$ be a simple random leap
as in Definition~\ref{defn:leap},
and let $N\ge k_p+k_q$ be an integer.  Now let $T^\star=\min\{n\ge0:X_n^\star\le0\text{ or }X_n^\star\ge N\}$, and define a Markov chain $\{X_n:n\ge0\}$ by
\begin{align*}\SADS
X_n=\begin{cases}
X_n^\star &\text{ if }n<T^\star,\\
0 &\text{ if }n\ge T\text{ and }X^\star_{T^\star}\le0,\\
N &\text{ if }n\ge T\text{ and }X^\star_{T^\star}\le N.
\end{cases}
\end{align*}
We call the resulting Markov chain an \emph{absorbing random leap}.
Like the absorbing random jump, the absorbing random leap is also absorbed when it would otherwise ``leap past'' the barrier.
\end{defn}

In lieu of Definition~\ref{defn:absorb}, which in turn depends on Definition~\ref{defn:leap}, we could instead simply define the absorbing random leap as the Markov chain taking values in $\{0,\ldots,N\}$ with transition probabilities for every $n\ge0$ given by
\begin{align*}
P\left(X_{n+1}=j\Given X_n=i\right)=\begin{cases}
p_{j-i} & \text{ if }0<i<N\text{ and }i<j\le \min\{i+k_p,N-1\},\\
\sum_{\ell=N-i}^{k_p} p_\ell &\text{ if }N-k_p\le i<N\text{ and }j=N,\\
q_{i-j} & \text{ if }0<i<N\text{ and }\max\{i-k_q,1\}\le j<i,\\
\sum_{\ell=
i
}^{k_q} q_\ell &\text{ if }0<i\le k_q\text{ and }j=0,\\
1 & \text{ if }i=j=0\text{ or }i=j=N,\\
0 & \text{ otherwise}.
\end{cases}
\end{align*}
For example, when $k_p=k_q=2$, the transition matrix has the form
\newlength\transmatcolwidth
\settowidth{\transmatcolwidth}{$\:p\mathord+p\:$}
\begin{align*}
\bordermatrix{
				&0		&1	&2	&3	&4	&5	&\cdots	&N\mathord{-}3	&N\mathord{-}2	&N\mathord{-}1	&N	\cr
\hfill0\hfill	&1		&0	&0	&0	&0	&0	&\cdots	&0	&0	&0	&0	\cr
\hfill1\hfill
&\makebox[\transmatcolwidth][c]{$q_1\mathord+q_2$}
						&0	&p_1&p_2&0	&0	&\cdots	&0	&0	&0	&0	\cr
\hfill2\hfill	&q_2	&q_1&0	&p_1&p_2&0	&\cdots	&0	&0	&0	&0	\cr
\hfill3\hfill	&0		&q_2&q_1&0	&p_1&p_2&\cdots	&0	&0	&0	&0	\cr
\hfill\vdots\hfill&\vdots&\vdots&\vdots&\vdots&\vdots&\vdots&\ddots&\vdots&\vdots&\vdots&\vdots\cr
\hfill N\mathord{-}3\hfill&0		&0	&0	&0	&0	&0	&\cdots	&0	&p_1	&p_2	&0	\cr
\hfill N\mathord{-}2\hfill&0		&0	&0	&0	&0	&0	&\cdots	&q_1	&0	&p_1	&p_2	\cr
\hfill N\mathord{-}1\hfill&0		&0	&0	&0	&0	&0	&\cdots	&q_2	&q_1	&0	&\makebox[\transmatcolwidth][c]{$p_1\mathord{+}p_2$}\cr
\hfill N\hfill
&\makebox[\transmatcolwidth][c]{$0$}
&\makebox[\transmatcolwidth][c]{$0$}
&\makebox[\transmatcolwidth][c]{$0$}
&\makebox[\transmatcolwidth][c]{$0$}
&\makebox[\transmatcolwidth][c]{$0$}
&\makebox[\transmatcolwidth][c]{$0$}
&\cdots
&\makebox[\transmatcolwidth][c]{$0$}
&\makebox[\transmatcolwidth][c]{$0$}
&\makebox[\transmatcolwidth][c]{$0$}
&1
}.
\end{align*}
Although
this definition of the absorbing random leap
is perhaps more elementary, it somewhat obscures the connection between the absorbing random leap and the non-absorbing simple random leap on~$\mathbb{Z}$.

We now consider the long-term behavior of the absorbing simple random leap.
Some of its properties are rather trivial.  For instance, the
absorbing states $0$ and $N$ are recurrent while all other states are transient, and a distribution~$\bm{\pi}$ is stationary if and only if it satisfies $\pi_0+\pi_N=1$.
Instead of these properties, we are primarily interested in
events associated with the absorption of the chain into one of the absorbing
barriers.
Note that these absorption events may alternatively be interpreted in terms of first passage times for a non-absorbing simple random leap.  The analogous interpretation for simple random walks was discussed in Subsection~\ref{subsec:walk}.
Probabilities and expected values associated with these events are the focus of Subsections~\ref{subsec:leap-u}~and~\ref{subsec:leap-v}.

\subsection{Connections to Other Models}
\label{subsec:leap-graph}

The random leap
has some similarities to
to another generalization of the random walk: the random walk on a graph.  Let $G$ be a graph with vertex set~$S$.  A random walk on~$G$ is a Markov chain taking values in~$S$ defined so that if $X_n=x$, then $X_{n+1}\given X_n=x$ has the discrete uniform distribution on the set of vertices adjacent to~$x$.  (Clearly every vertex must be adjacent to at least one other vertex for such a distribution to be well-defined.)  Now suppose $S=\mathbb{Z}$, and suppose $G$ is the graph
where every pair of distinct vertices~$i$ and~$j$ are adjacent if and only if $|i-j|\le k$.
The $k=2$ case of this graph is illustrated in Figure~\ref{fig:jump-graph}.
Then this special case of the random walk on a graph may also be regarded as a special case of the simple random leap on~$\mathbb{Z}$ where $p_j=q_j=1/2k$ for each $j\in\{1,\ldots,k\}$.
It should be recognized, however, that neither the random leap nor the random walk on a graph can be wholly viewed as a special case of the other.
Specifically, the random leap also admits unequal transition probabilities, while the random walk on a graph admits other types of graphs beyond simply the ``$2k$ nearest neighbors'' graph defined above.

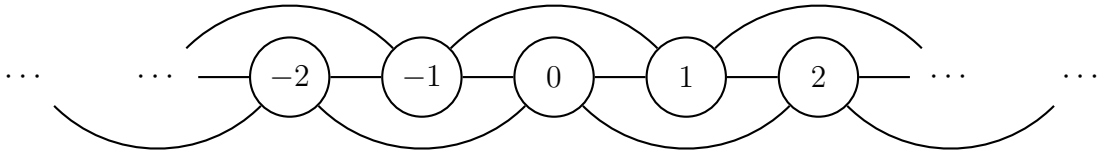
\begin{figure}[htbp]
\color{black}
\begin{center}
\begin{tikzpicture}[node distance=50pt,thick]
\node[circle,draw,minimum size=30pt] (n2) {$-2$};
\node[circle,draw,minimum size=30pt] (n1) [right of=n2] {$-1$};
\node[circle,draw,minimum size=30pt] (0) [right of=n1] {$0$};
\node[circle,draw,minimum size=30pt] (1) [right of=0] {$1$};
\node[circle,draw,minimum size=30pt] (2) [right of=1] {$2$};
\node[circle,minimum size=30pt] (3) [right of=2] {$\cdots$};
\node[circle,minimum size=30pt] (4) [right of=3] {$\cdots$};
\node[circle,minimum size=30pt] (n3) [left of=n2] {$\cdots$};
\node[circle,minimum size=30pt] (n4) [left of=n3] {$\cdots$};
\path
(2) edge node {} (3)
(1) edge node {} (2)
(0) edge node {} (1)
(n1) edge node {} (0)
(n2) edge node {} (n1)
(n3) edge node {} (n2)
(4) edge [bend left, out=45, in=135] node {} (2)
(2) edge [bend left, out=45, in=135] node {} (0)
(0) edge [bend left, out=45, in=135] node {} (n2)
(n2) edge [bend left, out=45, in=135] node {} (n4)
(3) edge [bend right, out=315, in=225] node {} (1)
(1) edge [bend right, out=315, in=225] node {} (n1)
(n1) edge [bend right, out=315, in=225] node {} (n3)
;
\end{tikzpicture}
\end{center}
\vspace*{-2\topsep}
\caption{Graph with vertex set $S=\mathbb{Z}$ where distinct vertices $i$ and~$j$ are adjacent if and only if $|i-j|\le2$.}
\label{fig:jump-graph}
\end{figure}

\subsection{Absorption Probabilities}
\label{subsec:leap-u}

For the absorbing random leap, we first consider 
$u_i=P(X_T=N\given X_0=i)$, the upper absorption probability from state~$i$,
where $T=\min\{n\ge0:X_n=0\text{ or }X_n=N\}$.
Recall that $u_i$ may alternatively be interpreted in terms of first passage times for a non-absorbing simple random leap.
Trivially, we may immediately note that $u_0=0$ and $u_N=1$.

The conventional approach to finding absorption probabilities from transient starting 
states 
is
to partition the transition matrix as
\[
\bm{P}=\begin{pmatrix}\bm{Q}&\bm{R}\\\bm{0}&\bm{P}_\star\end{pmatrix},
\]
where $\bm{Q}$ and $\bm{P}_\star$ are square and correspond to the transient and absorbing states, respectively
\citep{resnick1992}.
Then the $(i,j)$th element of the matrix $(\bm{I}-\bm{Q})^{-1}\bm{R}$ gives the probability of absorption by the $j$th absorbing state when starting in the $i$th transient state.
This form may appear useful, but it has a number of shortcomings, which we explain below.
First, the $(\bm I-\bm Q)^{-1}\bm R$ method does not provide any tractable closed-form expression for a single absorption probability~$u_i$ (except of course as the $i$th element of a much larger vector).
Thus, $(\bm I-\bm Q)^{-1}\bm R$ is merely a symbolic expression that does not provide a closed form for its individual elements, whereas the method we develop in this subsection accomplishes precisely this task.
Second, the number of rows and columns of the matrix $\bm I-\bm Q$ is $N-1$, the number of states enclosed by the two barriers.  This number is typically large (especially in ``big data'' applications), so inversion of the matrix $\bm I-\bm Q$ may be quite unwieldy since it is of order $O(N^3)$ computationally.  In contrast, the method we develop in this subsection requires only the computation of determinants of matrices with $2k-1$ rows and columns, which is of order $O(k^3)$ computationally.  Since $k$ is typically much smaller than~$N$, our method is thus considerably more efficient.
Third, the $(\bm I-\bm Q)^{-1}\bm R$ method does not provide much insight into the derivation of related quantities (e.g., expected absorption times or stationary distributions) for this chain and similar chains (e.g., random leaps with reflecting barriers).  In contrast, the method we develop in this subsection introduces a framework by which several of these related tasks can also be accomplished.

Before stating our results,
we first
provide a definition.
\begin{defn}
The \emph{characteristic polynomial} of
the
random leap is
\[
\chi(z)=\sum_{j=0}^{\mathclap{2k-1}} c_j\,z^{j},
\]
where the coefficients
$c_{j}$
are given by
\[
c_{j}=\begin{cases}
-\sum_{\ell=k-j}^k q_\ell & \text{ if }0\le j\le k-1,\\
\phantom{-}\sum_{\ell=j-k+1}^k p_\ell & \text{ if }k\le j\le 2k-1.
\end{cases}
\]
\end{defn}

\begin{rem}
\label{rem:chi}
The characteristic polynomial may instead be written as $\chi(z)=\sum_{j=k-k_q}^{k+k_p-1}c_j\,z^j$ to exclude any leading and trailing terms with zero coefficients.
\end{rem}

We now consider the roots of~$\chi(z)$.  From the form in Remark~\ref{rem:chi},
it is clear that $\chi(z)$ is a polynomial of order $k+k_p-1$ and hence has $k+k_p-1$ roots (counted according to multiplicity).
Now observe that
\bas\SADS
\chi(z)=z^{k-k_q}\smashoperator{\sum_{j=k-k_q}^{k+k_p-1}}c_jz^{j-k+k_q}.
\eas
Then since $c_{k-k_q}\ne0$, it is clear that zero is a root of $\chi(z)$ if and only if $k_q<k$.  Furthermore, if zero is a root of $\chi(z)$, then its multiplicity is $k-k_q$.
Thus, the number of nonzero roots of~$\chi(z)$ (again counted according to multiplicity) is $(k+k_p-1)-(k-k_q)=k_p+k_q-1$ (i.e.,~the total number of roots minus the number of times that zero occurs as a root).
Now let $z_1,\ldots,z_s$ denote the distinct nonzero roots of~$\chi(z)$, and let $ r_1,\ldots, r_s$ denote their respective multiplicities.  Also let $r=\sum_{j=1}^s r_j=k_p+k_q-1$.
Then for each $j\in\{1,\ldots,s\}$, let~$\bm{Z}^{(j)}$ be the $(N+ r-1)\times r_j$ matrix with elements
\[
Z^{(j)}_{i,\ell}=(i+1-k_q)^{\ell-1}z_j^{i+1-k_q},
\]
with the convention that $0^0=1$.
Next, let $\bm{Z}$ be the $(N+ r-1)\times r$ matrix
\begin{align*}
\bm{Z}=\begin{pmatrix}\bm{Z}^{(1)}&\bm{Z}^{(2)}&\cdots&\bm{Z}^{(s)}\end{pmatrix},
\end{align*}
which has full column rank.

\begin{rem}
Some of the roots $z_1,\ldots,z_s$ may be complex, but this poses no problem since
appropriate boundary conditions
will ensure that our eventual results yield real numbers.
Since the possibly complex nature of these roots will seldom be relevant,
we will continue to use $i$ to denote a generic index, and never $\sqrt{-1}$.
\end{rem}

We now define matrices that will be of importance
for stating our results concisely.
Note that we will henceforth write $\bm{0}$ to denote a zero matrix or vector if its size is contextually clear.  We will also write $\bm{I}_m$ to denote the $m\times m$ identity matrix for any nonnegative integer~$m$, where we interpret $\bm{I}_0$ as an empty matrix.
\begin{defn}\label{defn:accordion}
For each $i\in\{0,\ldots,N\}$, the $i$th \emph{accordion matrix} $\bm{A}_i$ is the $ r\times(N+ r-1)$ matrix
\bas\SADS
\bm{A}_i=\begin{pmatrix}
\bm{I}_{k_q-1}&\bm{0}^{\phantom{T}}&\bm{0}_{\phantom{N-i}}&\bm{0}\\
\bm{0}
&\bm{1}_i^\T&\bm{0}_{N-i}^\T&
\bm{0}\\
\bm{0}&\bm{0}^{\phantom{T}}&\bm{0}_{\phantom{N-i}}&\bm{I}_{k_p-1}\end{pmatrix}.
\eas
\end{defn}

The motivation for the name ``accordion'' is made clear by considering the form of $\bm{A}_i\bm{M}$ for any matrix $\bm{M}$ with $N+ r-1$ rows: $\bm{A}_i\bm{M}$ retains the top $k_q-1$ rows and bottom $k_p-1$ rows of~$\bm{M}$ while summing the first $i$ of the $N$ middle rows of $\bm{M}$ into a single row.
With this notation, we now state and prove the
following
result.

\begin{thm}\label{thm:u}
Let $\{X_n:n\ge0\}$ be an absorbing random leap.  For each $i\in\{0,\ldots,N\}$, let $u_i=P(X_n=N\text{ for some }n\ge0\given X_0=i)$ be the upper absorption probability from starting state~$i$.
Then
\[
u_i=\frac{\det(\bm{A}_{\mathrlap{\,i}\hphantom{N}}\bm{Z})}{\det(\bm{A}_N\bm{Z})}
\]
for each $i\in\{0,\ldots,N\}$.
\end{thm}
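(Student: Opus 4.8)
The plan is to pass from $u_i$ to the first differences $d_i=u_i-u_{i-1}$ and reduce the statement to a short multilinearity-of-determinant argument. First I would extend $u_i$ to all $i\in\mathbb{Z}$ by the convention $u_i=0$ for $i\le0$ and $u_i=1$ for $i\ge N$, and observe by first-step analysis on the absorbing random leap (with ``leaping past'' a barrier accounted for automatically by this extension) that
\[
u_i=\sum_{\ell=1}^{k_p}p_\ell\,u_{i+\ell}+\sum_{\ell=1}^{k_q}q_\ell\,u_{i-\ell},\qquad 1\le i\le N-1.
\]
Using $\sum_{\ell=1}^{k_p}p_\ell+\sum_{\ell=1}^{k_q}q_\ell=1$ to rewrite this as $\sum_\ell p_\ell(u_{i+\ell}-u_i)+\sum_\ell q_\ell(u_{i-\ell}-u_i)=0$, expanding each bracket as a telescoping sum of $d_j$'s and interchanging the order of summation yields a homogeneous linear recurrence relating $d_{i-k_q+1},\dots,d_{i+k_p}$ valid for $1\le i\le N-1$; a routine coefficient comparison identifies its characteristic polynomial with $\chi(z)/z^{k-k_q}$, whose roots are precisely the nonzero roots $z_1,\dots,z_s$ of $\chi$ with multiplicities $r_1,\dots,r_s$, the zero roots dropping out. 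The extension also forces $d_j=0$ for $j\le0$ and for $j\ge N+1$.

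Next I would set up the matrix identity. Indexing the $N+r-1$ rows of $\bm Z$ by $m=2-k_q,\dots,N+k_p-1$, the $m$th row of $\bm Z$ collects the entries $m^{\ell-1}z_j^m$; since the columns of $\bm Z$ are exactly the basic solutions of the above order-$r$ recurrence sampled on this window of consecutive integers, and $\bm Z$ has full column rank $r$, they form a basis of the $r$-dimensional space of all such windowed sequences satisfying the recurrence (the window has length $N+r-1$ and the recurrence imposes the $N-1$ conditions). As $(d_m)_{m=2-k_q}^{N+k_p-1}$ belongs to that space, there is a unique vector $\bm b$ with $\bm Z\bm b=(d_m)_m$. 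Reading off the action of the accordion matrix from its block structure, $\bm A_i\bm Z\bm b$ has top $k_q-1$ entries $d_{2-k_q},\dots,d_0$, middle (the $k_q$th) entry $\sum_{m=1}^i d_m=u_i$, and bottom $k_p-1$ entries $d_{N+1},\dots,d_{N+k_p-1}$; by the boundary values of $d_j$ this gives $\bm A_i\bm Z\bm b=u_i\,\bm e_{k_q}$ for every $i$, where $\bm e_{k_q}$ denotes the $k_q$th standard basis vector of $\reals^{r}$, and in particular $\bm A_N\bm Z\bm b=\bm e_{k_q}$ since $u_N=1$.

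Finally I would convert this into the determinant formula. The matrices $\bm A_i\bm Z$ and $\bm A_N\bm Z$ agree in every row but the $k_q$th, and $\bm b\ne\bm0$ because $\bm A_N\bm Z\bm b=\bm e_{k_q}\ne\bm0$; since $(\bm A_i\bm Z)\bm b=u_i\bm e_{k_q}=u_i(\bm A_N\bm Z)\bm b$, the matrix obtained from $\bm A_i\bm Z$ by subtracting $u_i$ times the $k_q$th row of $\bm A_N\bm Z$ from its own $k_q$th row annihilates $\bm b$, hence is singular; expanding its determinant by multilinearity in that single row gives $\det(\bm A_i\bm Z)-u_i\det(\bm A_N\bm Z)=0$. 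It remains to see $\det(\bm A_N\bm Z)\ne0$: this follows from the uniqueness of absorption probabilities (equivalently invertibility of $\bm I-\bm Q$)---running the reduction in reverse, any $\bm b'$ with $\bm A_N\bm Z\bm b'=\bm0$ yields, via partial sums, a solution of the homogeneous absorbing-leap equations with zero boundary data, which must vanish, forcing $\bm Z\bm b'=\bm0$ and hence $\bm b'=\bm0$. Dividing then gives $u_i=\det(\bm A_i\bm Z)/\det(\bm A_N\bm Z)$ for all $i\in\{0,\dots,N\}$.

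The step I expect to be the main obstacle is the bookkeeping in the first paragraph: carefully deriving the difference recurrence and matching its coefficients against the definition of $\chi$ through the index shifts involving $k$, $k_p$, $k_q$ (and verifying that exactly the nonzero roots with their multiplicities survive), together with the clean justification that the ``accordion'' row pattern of $\bm A_i$---$k_q-1$ top rows, one summed middle row, $k_p-1$ bottom rows---is precisely matched to the $k_q-1$ left boundary conditions, the $k_p-1$ right boundary conditions, and the single normalization $\sum_{m=1}^N d_m=1$. Once those alignments are in place, the determinant identity is immediate.
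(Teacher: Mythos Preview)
Your proposal is correct and shares the paper's setup verbatim: extend $u_i$ past the barriers, pass to the differences $d_i$, derive the order-$r$ homogeneous recurrence with characteristic polynomial $\chi^\star(z)=z^{k_q-k}\chi(z)/p_{k_p}$, and encode the $r$ boundary conditions ($d_j=0$ for $j\in\{2-k_q,\dots,0\}\cup\{N+1,\dots,N+k_p-1\}$ plus $\sum_{m=1}^N d_m=1$) as $\bm A_N\bm Z\bm b=\bm e_{k_q}$.

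Where you diverge is in the final extraction of the formula. The paper solves $\bm A_N\bm Z\bm b=\bm e_{k_q}$ for $\bm b$ via Cramer's rule, converts each $d_j$ into a determinant through a Laplace expansion, and then sums those determinants over $j=1,\dots,i$, recognizing the result as $\det(\bm A_i\bm Z)$. Your route is shorter: you notice that $\bm A_i\bm Z\bm b=u_i\bm e_{k_q}$ holds for \emph{every} $i$ (not just $i=N$), and since $\bm A_i\bm Z$ and $\bm A_N\bm Z$ differ only in row~$k_q$, the matrix $\bm A_i\bm Z-u_i\bm e_{k_q}\bm e_{k_q}^\T\bm A_N\bm Z$ kills the nonzero vector $\bm b$ and is therefore singular, whence multilinearity in row $k_q$ gives $\det(\bm A_i\bm Z)=u_i\det(\bm A_N\bm Z)$ directly. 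This bypasses Cramer's rule and the Laplace bookkeeping entirely; the paper's route, on the other hand, produces the intermediate closed forms for the individual $d_j$ as a by-product, which it reuses in the proof of Theorem~\ref{thm:v}. Your nonsingularity argument for $\bm A_N\bm Z$ is the same as the paper's, phrased in terms of the uniqueness of absorption probabilities.
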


\begin{proof}
Begin by taking $u_i=0$ for any $i<0$ and $u_i=1$ for any $i>N$,
noting that these values can be fully justified by viewing the absorbing random leap as simply a ``stopped'' version of the simple random leap on~$\mathbb{Z}$.
Then define $d_i=u_i-u_{i-1}$ for every~$i$, noting that $d_i\ne0$ only if $i\in\{1,\ldots,N\}$.
Then for each $i\in\{k_p+1,\ldots,N+k_p-1\}$, a first-step analysis yields
\begin{align*}\SADS
u_{i-k_p}&
=\sum_{j=0}^N P\left(X_T=N\Given X_0=i-k_p,\;X_1=j\right)\;P\left(X_1=j\Given X_0=i-k_p\right)\\
&
=\sum_{j=0}^N P\left(X_T=N\Given X_0=j\right)\;P\left(X_1=j\Given X_0=i-k_p\right)
=\sum_{\ell=1}^{k_p} p_\ell\,u_{i-k_p+\ell}+\sum_{\ell=1}^{k_q} q_\ell\,u_{i-k_p-\ell},
\end{align*}
noting that the last equality holds for $i\in\{k_p+1,\ldots, r-1\}$ because we have taken $u_i=0=u_0$ when $i<0$, and that it holds for $i\in\{N+1,\ldots,N+k_p-1\}$ because we have taken $u_i=1=u_N$ when $i>N$.  Then
\begin{align*}\SADS
0&=\sum_{\ell=1}^{k_p} p_\ell\, u_{i-k_p+\ell}+\sum_{\ell=1}^{k_q} q_\ell\, u_{i-k_p-\ell}-\left(\sum_{\ell=1}^{k_p} p_\ell+\sum_{\ell=1}^{k_q} q_\ell\right)u_{i-k_p}\\
&=\sum_{\ell=1}^{k_p} p_\ell \left(u_{i-k_p+\ell}-u_{i-k_p}\right)+\sum_{\ell=1}^{k_q} q_\ell \left(u_{i-k_p-\ell}-u_{i-k_p}\right)\\
&=\sum_{\ell=1}^{k_p} p_\ell \left(\sum_{j=1}^\ell d_{i-k_p+j}\right)+\sum_{\ell=1}^{k_q} q_\ell \left(-\sum_{j=1}^\ell d_{i-k_p-j+1}\right)\\
&=\sum_{j=1}^{k_p}\left(\sum_{\ell=j}^{k_p} p_\ell\right)d_{i-k_p+j}-\sum_{j=1}^{k_q}\left(\sum_{\ell=j}^{k_q} q_\ell\right)d_{i-k_p-j+1},
\end{align*}
which implies that
\basn
d_i&=\sum_{j=1}^{\mathclap{k_p-1}}\left(-\frac{1}{p_{k_p}}\sum_{\ell=j}^{k_p} p_\ell\right)d_{i-k_p+j}+\sum_{j=1}^{k_q}\left(\frac{1}{p_{k_p}}\sum_{\ell=j}^{k_q} q_\ell\right)d_{i-k_p-j+1}\notag\\
&=-\frac{1}{p_{k_p}}\sum_{j=1}^{ r}c_{k-k_q+j-1}\,d_{i-j}\label{d-recur-hom}
\easn
for each $i\in\{k_p+1,\ldots,N+k_p-1\}$.  Thus, the $d_i$ are governed by a linear recurrence relation of order~$ r$ with characteristic polynomial
\begin{align}
\chi^\star(z)=\frac{1}{p_{k_p}}\sum_{j=0}^{ r}c_{k-k_q+j}\,z^{j}
=\frac{z^{k_q-k}}{p_{k_p}}\sum_{j=k\mathrlap{-k_q}}^{k+\mathrlap{k_p-1}}c_{j}\,z^{j}
=\frac{z^{k_q-k}}{p_{k_p}}\;\chi(z),
\label{chi-star}
\end{align}
where the last equality is justified by Remark~\ref{rem:chi}.
Thus, the $ r$ roots of $\chi^\star(z)$ coincide with the $ r$ nonzero roots of $\chi(z)$.
The general form of the solution to
the recurrence relation in~(\ref{d-recur-hom})
is
\[
d_i=\sum_{j=1}^s\sum_{\ell=1}^{ r_j}b^\star_{j,\ell}\,i^{\ell-1}z_j^i,
\]
with the convention that $0^0=1$,
and where the $b^\star_{j\ell}$ are coefficients that must be found through boundary conditions, of which $ r$ are needed.  If we define vectors
$\bm{b}=(b_1,\ldots,b_{ r})=(b^\star_{1,1},\ldots,b^\star_{s,r_s})$
and
$\bm{d}=(d_{2-k_q},d_{3-k_q},\ldots,d_{N+k_p-1})$,
then we may write the general solution as simply $\bm{d}=\bm{Z}\bm{b}$.  Now let the $ r$ required boundary conditions be
\bas\SADS
d_i=0\quad\text{ for each }i\in\{2-k_q,\ldots,0\}\cup\{N+1,\ldots,N+k_p-1\},
\qquad
\sum_{i=1}^N d_i=u_N-u_0=1,
\eas
which may be represented in matrix form as $\bm{A}_N\bm{d}=\bm{e}_{k_q}$, where $\bm{e}_{k_q}$ is the $k_q$th unit vector of length~$ r$.  Then $\bm{A}_N\bm{Z}\bm{b}=\bm{e}_{k_q}$.
We now establish that the matrix $\bm A_N\bm Z$ is nonsingular.
Suppose instead that $\bm A_N\bm Z$ is singular.  Then the equation $\bm A_N\bm Z\bm b=\bm e_{k_q}$ has either no solutions for~$\bm b$ or infinitely many solutions for~$\bm b$.  If there are no solutions for~$\bm b$, then there are no solutions for $\bm d=\bm Z\bm b$.  If instead there are infinitely many solutions for~$\bm b$, then there are also infinitely many solutions for~$\bm d$ since $\bm Z$ has full column rank.  Clearly both cases lead to a contradiction since there must be a unique solution for~$\bm d$.
Thus, $\bm A_N\bm Z$ is nonsingular.
We may therefore
apply Cramer's rule and the Laplace expansion of the determinant to write the elements $b_\ell$ of $\bm{b}$ as
\bas\SADS
b_\ell=\frac{\det(
\bm{\Lambda}_\ell
)}{\det(\bm{A}_N\bm{Z})}=(-1)^{k_q+\ell}\;\frac{\det(\tilde{\bm{A}}_{k_q}\tilde{\bm{Z}}_\ell)}{\det(\bm{A}_{\mathrlap{N}\hphantom{k_q}}\bm{Z}_{\hphantom\ell})},
\eas
where $\bm{\Lambda}_\ell$ is the matrix formed by replacing the $\ell$th column of $\bm{A}_N\bm{Z}$ with $\bm{e}_{k_q}$,
and where $\tilde{\bm{A}}_\ell$ and $\tilde{\bm{Z}}_\ell$ denote the matrices formed by deleting the $\ell$th row of $\bm{A}_N$ and the $\ell$th column of $\bm{Z}$, respectively.
Then the $j$th element $(\bm{d})_j=d_{1-k_q+j}$ of $\bm{d}$ is
\begin{align}\label{numerator-laplace}
d_{1-k_q+j}
=\frac{1}{\det(\bm{A}_N\bm{Z})}\sum_{\ell=1}^{ r}Z_{j,\ell}\;(-1)^{k_q+\ell}
\det(\tilde{\bm{A}}_{k_q}\tilde{\bm{Z}}_{\ell}),
\end{align}
where $Z_{j,\ell}$ is the $(j,\ell)$th element of $\bm{Z}$.
Now, for each $j\in\{1,\ldots, r\}$, define $\breve{\bm{A}}_{j}$ to be the $ r\times(N+ r-1)$ matrix formed by replacing the $k_q$th row of $\bm{A}_N$ with~$\tilde{\bm{e}}_j^\T$, where $\tilde{\bm{e}}_j$ is the $j$th unit vector of length $N+r-1$.
Then the sum in~(\ref{numerator-laplace}) is precisely the Laplace expansion of $\det(\breve{\bm{A}}_j\bm{Z})$ along its $k_q$th row, and thus
$d_{1-k_q+j}=\det(\breve{\bm{A}}_j\bm{Z})/\det(\bm{A}_N\bm{Z})$ for each $j\in\{1,\ldots,N+ r-1\}$.
Finally, for each $i\in\{0,\ldots,N\}$, we have
\begin{align*}
u_i=\sum_{j=1}^i d_j
=\smashoperator{\sum_{j=k}^{i+k_q-1}}d_{1-k_q+j}
=\frac{1}{\det(\bm{A}_N\bm{Z})}\smashoperator{\sum_{j=k_q}^{i+k_q-1}}\det(\breve{\bm{A}}_j\bm{Z})
&
=
\frac{\det(\bm{A}_{\mathrlap{\,i}\hphantom{N}}\bm{Z})}{\det(\bm{A}_N\bm{Z})},
\end{align*}
noting that the matrices $\breve{\bm{A}}_{k_q},\ldots,\breve{\bm{A}}_{i+k_q-1}$ differ only in the $k_q$th row and that the sum of these rows is the $k_q$th row of $\bm{A}_i$.
\end{proof}


\begin{cor}[to Theorem~\ref{thm:u}]
The upper absorption probabilities for the simple random walk and simple random jump
are given by the expressions
in Lemma~\ref{lem:u-1} and Theorem~\ref{thm:u-2}.
\end{cor}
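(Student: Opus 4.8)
The plan is to obtain the Corollary as a direct specialization of Theorem~\ref{thm:u} to the cases $k=1$ and $k=2$, simplifying the matrices $\bm{Z}$ and $\bm{A}_i$ in each case and then evaluating the (small) determinant $\det(\bm{A}_i\bm{Z})$ explicitly.

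First, for the simple random walk, take $k=1$. Since $0<p<1$ forces $p_1>0$ and $q_1>0$, we have $k_p=k_q=1$ and $ r=k_p+k_q-1=1$. The characteristic polynomial is $\chi(z)=c_1z+c_0=p_1z-q_1=pz-(1-p)$, whose unique nonzero root is $z_1=(1-p)/p$. Because $k_q=1$, the matrix $\bm{Z}$ is the $N\times1$ column with entries $Z_{i,1}=z_1^i$, and because $k_p-1=k_q-1=0$ both identity blocks of the accordion matrix vanish, so $\bm{A}_i$ reduces to the $1\times N$ row $(\bm{1}_i^\T\ \ \bm{0}_{N-i}^\T)$. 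Hence $\bm{A}_i\bm{Z}$ is the scalar $\sum_{j=1}^i z_1^j$, and Theorem~\ref{thm:u} gives $u_i=\frac{\sum_{j=1}^i z_1^j}{\sum_{j=1}^N z_1^j}$, which is exactly Lemma~\ref{lem:u-1} (and recovers the determinant form~(\ref{u-det-srw}) as well).

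Next, for the simple random jump, take $k=2$ with $p_2$ and $q_2$ both nonzero, so $k_p=k_q=2$ and $ r=3$. The characteristic polynomial is $\chi(z)=p_2z^3+(p_1+p_2)z^2-(q_1+q_2)z-q_2$, i.e.\ $p_2\,\chi^\star(z)$ in the notation preceding Theorem~\ref{thm:u-2}; by the Remark there its three nonzero roots $z_1,z_2,z_3$ are distinct, so $s=3$ with every multiplicity $ r_j=1$ and no $i^{\ell-1}$ factors enter. With $k_q=2$ the matrix $\bm{Z}$ is the $(N+2)\times3$ matrix with $(i,j)$th entry $z_j^{i-1}$, and with $k_p-1=k_q-1=1$ the accordion matrix $\bm{A}_i$ is the $3\times(N+2)$ matrix displayed in~(\ref{u-det-srj}). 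Multiplying out, the first row of $\bm{A}_i\bm{Z}$ is $(1,1,1)$, the middle row is $(\sum_{j=1}^i z_1^j,\ \sum_{j=1}^i z_2^j,\ \sum_{j=1}^i z_3^j)$, and the last row is $(z_1^{N+1},z_2^{N+1},z_3^{N+1})$, which is precisely the $3\times3$ matrix in~(\ref{u-det-srj}). Expanding its determinant by cofactors along the middle row yields $(z_2^{N+1}-z_3^{N+1})\sum_{j=1}^i z_1^j+(z_3^{N+1}-z_1^{N+1})\sum_{j=1}^i z_2^j+(z_1^{N+1}-z_2^{N+1})\sum_{j=1}^i z_3^j$, and dividing the $i$-version by the $N$-version reproduces the formula of Theorem~\ref{thm:u-2}.

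There is no substantive obstacle here; the work is purely verificational. The only points requiring a little care are: (i) checking that the hypotheses $0<p<1$ and ``$p_2,q_2$ both nonzero'' indeed give $k_p=k_q=k$, so that the generic shapes of $\bm{Z}$ and $\bm{A}_i$ apply; (ii) confirming that the nonzero roots of $\chi$ coincide with the roots of the characteristic polynomials $\chi^\star$ written in Section~\ref{sec:srw}, which is immediate from~(\ref{chi-star}); and (iii) carrying out the $3\times3$ cofactor expansion and matching signs with the published formula. I would present the two cases in turn, writing out the $k=2$ computation as above and treating $k=1$ as the degenerate instance in which both identity blocks of the accordion matrix are empty.
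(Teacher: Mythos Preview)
Your proposal is correct and is exactly the approach the paper takes: its proof reads in full ``The results follow immediately from the forms of $\bm{Z}$ and $\bm{A}_i$,'' and you have simply spelled out those forms and the resulting determinant computations in the $k=1$ and $k=2$ cases.
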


\begin{proof}
The results follow immediately from the forms of $\bm{Z}$ and $\bm{A}_i$.
\end{proof}

\subsection{Expected Absorption Times}
\label{subsec:leap-v}

We now shift our attention to the expected absorption time from starting state~$i$, which we denote by
$v_i=E(T\given X_0=i)$, where once again $T=\min\{n\ge0:X_n=0\text{ or }X_n=N\}$.
Note that $v_i$ may alternatively be interpreted as the expected first passage time of the set $S_0\cup S_1=\mathbb{Z}\setminus\{1,\ldots,N-1\}$ for a non-absorbing simple random leap with initial state~$i$.  Trivially, we may immediately note that $v_0=v_N=0$.
The
form of the remaining values is provided by the following result, which also includes $v_0$ and $v_N$ as special cases.
We
first introduce some additional notation.  Let
\begin{align*}
\mu&=\sum_{j=1}^k j(p_j-q_j)=E\left(X_{n+1}-X_n\Given k_q\le X_n\le N-k_p\right),\\
\sigma^2&=\sum_{j=1}^k j^2(p_j+q_j)-\mu^2=\Var\left(X_{n+1}-X_n\Given k_q\le X_n\le N-k_p\right),
\end{align*}
and define a vector
$\bm{\delta}=(\delta_{2-k_q}\;\;\delta_{3-k}\;\;\cdots\;\;\delta_{N+k_p-1})^\T$ 
according to
\[
\delta_i=\begin{cases}-1/\mu & \text{ if }\mu\ne0,\\
-2i/\sigma^2 & \text{ if }\mu=0,\end{cases}
\]
for each $i\in\{2-k_q,\ldots,N+k_p-1\}$.
We now provide another definition.
\begin{defn}
For each $i\in\{0,\ldots,N\}$, the $i$th \emph{extended accordion matrix} $\bm{A}^{\star}_i$ is the $( r+1)\times(N+ r-1)$ matrix
\[
\bm{A}^{\star}_i
=\begin{pmatrix}
\bm{I}_{k_q-1}&\bm{0}^{\phantom{T}}&\bm{0}_{\phantom{N-i}}&\bm{0}\\
\bm{0}
&\bm{1}_i^\T&\bm{1}_{N-i}^\T&
\bm{0}\\
\bm{0}&\bm{0}^{\phantom{T}}&\bm{0}_{\phantom{N-i}}&\bm{I}_{k_p-1}\\
\bm{0}&\bm{1}_i^\T&\bm{0}_{N-i}^\T&\bm{0}\end{pmatrix}
=\begin{pmatrix}\bm{A}_N\\ \bm{e}_{k_q}^\T\bm{A}_i\end{pmatrix},
\]
where $\bm{e}_{k_q}$ is the $k_q$th unit vector of length~$r$.
\end{defn}
Finally, let $\bm{Z}^{\star}$ be the $(N+ r-1)\times( r+1)$ matrix given by $\bm{Z}^{\star}=(\bm{Z}\;\;\bm{\delta})$.  Then we have the following result.

\begin{thm}\label{thm:v}
Let $\{X_n:n\ge0\}$ be an absorbing random leap.  For each $i\in\{0,\ldots,N\}$, let $v_i$ be the expected absorption time from starting state~$i$.
Then
\[
v_i=\frac{\det(\bm{A}^{\,\star}_{\mathrlap{\,i}\hphantom{N}}\bm{Z}^{\star})}{\det(\bm{A}_N\bm{Z}^{\hphantom\star})}\
\]
for each $i\in\{0,\ldots,N\}$.
\end{thm}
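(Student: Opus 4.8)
The plan is to mimic the proof of Theorem~\ref{thm:u} as closely as possible, adjusting for the fact that the recurrence governing the differences $d_i=v_i-v_{i-1}$ is now \emph{inhomogeneous}. First I would set $v_i=0$ for $i\le0$ and $i\ge N$ (justified by viewing the absorbing random leap as a stopped simple random leap on~$\mathbb{Z}$), define $d_i=v_i-v_{i-1}$, and run a first-step analysis on $v_{i-k_p}=1+\sum_{\ell=1}^{k_p}p_\ell v_{i-k_p+\ell}+\sum_{\ell=1}^{k_q}q_\ell v_{i-k_p-\ell}$. The extra ``$+1$'' (the cost of the first step) propagates through the same algebraic manipulation used in Theorem~\ref{thm:u} and produces a recurrence of the form $d_i=-\frac1{p_{k_p}}\sum_{j=1}^{r}c_{k-k_q+j-1}d_{i-j}+\text{(const)}$, i.e.\ the same homogeneous part as~(\ref{d-recur-hom}) but with a nonzero constant forcing term, valid for $i\in\{k_p+1,\ldots,N+k_p-1\}$.

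Next I would solve the inhomogeneous recurrence by adding a particular solution to the general homogeneous solution $\sum_{j=1}^s\sum_{\ell=1}^{r_j}b^\star_{j,\ell}i^{\ell-1}z_j^i$. The particular solution is exactly where the vector $\bm\delta$ enters: when $\mu\ne0$ one checks that the constant sequence $d_i=-1/\mu$ solves the recurrence (since $\chi(1)=\sum_j c_j=\sum_j p_j-\sum_j q_j=\mu$ after accounting for the $p_{k_p}$ normalization, so $1$ is not a root and a constant particular solution exists), while when $\mu=0$ the value $1$ \emph{is} a root of $\chi$, forcing the linear particular solution $d_i=-2i/\sigma^2$; a short computation with $\chi'(1)$ or a direct substitution pins down the constant $-2/\sigma^2$. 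Thus the general solution is $\bm d=\bm Z\bm b+\bm\delta=\bm Z^\star\binom{\bm b}{1}$, where $\bm d=(d_{2-k_q},\ldots,d_{N+k_p-1})$ as before. The boundary conditions are now all homogeneous: $d_i=0$ for $i\in\{2-k_q,\ldots,0\}\cup\{N+1,\ldots,N+k_p-1\}$ together with $\sum_{i=1}^N d_i=v_N-v_0=0$, which in matrix form reads $\bm A_N\bm d=\bm 0$, i.e.\ $\bm A_N\bm Z\bm b=-\bm A_N\bm\delta$. Since $\bm A_N\bm Z$ is nonsingular (already shown in the proof of Theorem~\ref{thm:u}), $\bm b$ is uniquely determined, and I would then unwind $v_i=\sum_{j=1}^i d_j$ exactly as in Theorem~\ref{thm:u}.

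To finish, I would assemble the determinant formula. Writing $\binom{\bm b}{1}$ as the solution of the augmented system and applying Cramer's rule together with the Laplace expansion, the element $d_{1-k_q+j}=(\bm Z^\star\binom{\bm b}{1})_j$ becomes a ratio of determinants with $\det(\bm A_N\bm Z)$ in the denominator, and summing over $j$ collapses (via the same ``accordion'' telescoping used in Theorem~\ref{thm:u}, with the extra bottom row of $\bm A^\star_i$ recording the partial sum $\sum_{j=1}^i d_j$) to $v_i=\det(\bm A^\star_i\bm Z^\star)/\det(\bm A_N\bm Z)$. The cleanest way to organize this is probably to observe that the full system ``$\bm d=\bm Z^\star\bm c$ subject to $\bm A_N\bm d=\bm 0$ and $c_{r+1}=1$'' can be written as a single linear system whose coefficient matrix is $\begin{pmatrix}\bm A_N\bm Z^\star\\ \bm e_{r+1}^\T\end{pmatrix}$ with right-hand side $\bm e_{r+1}$, so that $\bm c$ is obtained by Cramer's rule; then $v_i=\bm e_{k_q}^\T\bm A_i\bm d=\bm e_{k_q}^\T\bm A_i\bm Z^\star\bm c$, and expanding the resulting quotient of determinants by multilinearity in the last column identifies it with $\det(\bm A^\star_i\bm Z^\star)/\det(\bm A_N\bm Z)$, using that $\bm A^\star_i=\binom{\bm A_N}{\bm e_{k_q}^\T\bm A_i}$ and that the last column of $\bm A_N\bm Z^\star$ is $\bm A_N\bm\delta$.

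The main obstacle I anticipate is the particular-solution bookkeeping in the $\mu=0$ case: verifying that $-2i/\sigma^2$ (and not merely ``some linear function'') is forced requires carefully extracting both $\chi(1)=0$ and the value of $\chi'(1)$ (or equivalently $\sum_j j c_j$) in terms of $\mu$ and $\sigma^2$, and tracking the $1/p_{k_p}$ normalization from~(\ref{chi-star}); a sign error or an off-by-one in the index shift defining $\bm\delta$ (note the paper writes $\delta_{3-k}$, seemingly a typo for $\delta_{3-k_q}$) would break the final identity. A secondary, more routine obstacle is confirming that the telescoping/Laplace-expansion step of Theorem~\ref{thm:u} goes through verbatim once the denominator system is augmented with the constraint $c_{r+1}=1$; I expect this to be purely mechanical given the machinery already developed.
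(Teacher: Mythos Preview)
Your proposal is correct and follows essentially the same route as the paper: first-step analysis yields the inhomogeneous recurrence~(\ref{d-recur-inhom}), $\bm\delta$ is verified as a particular solution (the paper does this by direct substitution rather than via $\chi(1)$ and $\chi'(1)$, but the content is identical; note your intermediate expression $\sum_j c_j=\sum_j p_j-\sum_j q_j$ should read $\sum_j jp_j-\sum_j jq_j$, though your conclusion $\chi(1)=\mu$ is right), the boundary conditions $\bm A_N\bm d=\bm0$ determine~$\bm b$, and Cramer's rule plus a Laplace expansion produce the claimed ratio of determinants. Your augmented-system packaging of the last step is a clean equivalent of the paper's ``clever absorption'' at~(\ref{clever}).
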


\begin{proof}
Begin by taking $v_i=0$ for any $i\notin\{0,\ldots,N\}$,
noting that these values can be fully justified by viewing the absorbing random leap as simply a ``stopped'' version of the simple random leap on~$\mathbb{Z}$.
Then define $d_i=v_i-v_{i-1}$ for every~$i$, noting that $d_i\ne0$ only if $i\in\{1,\ldots,N\}$.  Then for each $i\in\{k_p+1,\ldots,N+k_p-1\}$, a first-step analysis yields
\begin{align*}
v_{i-k_p}&=\sum_{j=0}^N E\left(X_T=N\Given X_0=i-k_p,\;X_1=j)\;P(X_1=j\mid X_0=i-k_p\right)\\
&=\sum_{j=0}^N \left[1+E\left(X_T=N\Given X_0=j\right)\right]\;P\left(X_1=j\Given X_0=i-k_p\right)\\
&=1+\sum_{\ell=1}^{k_p} p_\ell\, v_{i-k_p+\ell}+\sum_{\ell=1}^{k_q} q_\ell\, v_{i-k_p-\ell},
\end{align*}
noting that the last equality holds for $i\in\{k_p+1,\ldots, r-1\}$ because we have taken $v_i=0=v_0$ when $i<0$, and that it holds for $i\in\{N+1,\ldots,N+k_p-1\}$ because we have taken $v_i=0=v_N$ when $i>N$.  Then
\begin{align*}\SADS
0&=1+\sum_{\ell=1}^{k_p} p_\ell\, v_{i-k_p+\ell}+\sum_{\ell=1}^{k_q} q_\ell\, v_{i-k_p-\ell}-\left(\sum_{\ell=1}^{k_p} p_\ell+\sum_{\ell=1}^{k_q} q_\ell\right)v_{i-k_p}\\
&=1+\sum_{\ell=1}^{k_p} p_\ell \left(v_{i-k_p+\ell}-v_{i-k_p}\right)+\sum_{\ell=1}^{k_q} q_\ell \left(v_{i-k_p-\ell}-v_{i-k_p}\right)\\
&=1+\sum_{\ell=1}^{k_p} p_\ell \left(\sum_{j=1}^\ell d_{i-k_p+j}\right)+\sum_{\ell=1}^{k_q} q_\ell \left(-\sum_{j=1}^\ell d_{i-k_p+1-j}\right)\\
&=1+\sum_{j=1}^{k_p}\left(\sum_{\ell=j}^{k_p} p_\ell\right)d_{i-k_p+j}-\sum_{j=1}^{k_q}\left(\sum_{\ell=j}^{k_q} q_\ell\right)d_{i-k_p-\ell+1},
\end{align*}
which implies that
\basn\SADS
d_i&=-\frac{1}{p_{k_p}}+\smashoperator{\sum_{j=1}^{k_p-1}}\left(-\frac{1}{p_{k_p}}\sum_{\ell=j}^{k_p} p_\ell\right)d_{i-k_p+j}+\sum_{j=1}^{k_q}\left(\frac{1}{p_{k_p}}\sum_{\ell=j}^{k_q} q_\ell\right)d_{i-k_p-\ell+1}\notag\\
&=-\frac{1}{p_{k_p}}-\frac{1}{p_{k_p}}
\sum_{j=1}^{ r}c_{k-k_q+j-1}\,d_{i-j}\label{d-recur-inhom}
\easn
for each $i\in\{k_p+1,\ldots,N+k_p-1\}$.  Thus, the $d_i$ are governed by an inhomogeneous linear recurrence relation of order $ r$ with characteristic polynomial $\chi^\star(z)$ as given in the proof of Theorem~\ref{thm:u}.  The general form of the solution to
the recurrence relation in~(\ref{d-recur-inhom})
may be written as the sum of a particular solution and the solution to the corresponding homogeneous linear recurrence relation.
We now verify that the particular solution is~$\bm{\delta}$.
First, if $\mu\ne0$, then $\delta_i=-1/\mu$ for each~$i$, and
\begin{align*}
-\frac{1}{p_{k_p}}-\frac{1}{p_{k_p}}\sum_{j=1}^{ r}\left(-\frac{
c_{k-k_q+j-1}
}{\mu}\right)
&=
-\frac{1}{p_{k_p}}-\frac{c_{k+k_p-1}}{p_{k_p}\mu}+\frac{1}{p_{k_p}\mu}
\sum_{j=0}^{ r}c_{k-k_q+j}\\
&=
-\frac{1}{p_{k_p}}-\frac{1}{\mu}+\frac{1}{p_{k_p}\mu}\left(\sum_{j=1\;\vphantom{k}}^{k_p}\sum_{\ell=\mathrlap{k_p+1-j}\;\;\;}^{k_p} p_\ell\;\;\;-\;\;\;\sum_{\;\;\mathllap{j=k_p}+1}^{ r+1}\sum_{\ell=j\mathrlap{-k_p}\;}^{k_q} q_\ell\right)\\
&=
-\frac{1}{p_{k_p}}-\frac{1}{\mu}+\frac{1}{p_{k_p}\mu}\sum_{j=1}^{k}j(p_j-q_j)=-\frac{1}{\mu}.
\end{align*}
If instead $\mu=0$, then $\delta_i=-2i/\sigma^2$
for each~$i$,
and
\begin{align*}
&-\frac{1}{p_{k_p}}-\frac{1}{p_{k_p}}\sum_{j=1}^{ r}\left[-\frac{2(i-j)\,
c_{k-k_q+j-1}
}{\sigma^2}\right]\\
&\qquad=-\frac{1}{p_{k_p}}-\frac{2i\,c_{k+k_p-1}}{p_{k_p}\sigma^2}+\frac{2}{p_{k_p}\sigma^2}
\sum_{j=0}^{ r}(i-j-1)\,c_{k-k_q+j-1}\\
&\qquad=
-\frac{1}{p_{k_p}}-\frac{2i}{\sigma^2}+\frac{2(i-1)\mu}{p_{k_p}\sigma^2}-\frac{2}{p_{k_p}\sigma^2}
\sum_{j=0}^{ r}j\,c_{k-k_q+j-1}\\
&\qquad=-\frac{1}{p_{k_p}}-\frac{2i}{\sigma^2}-\frac{1}{p_{k_p}\sigma^2}\sum_{\ell=1}^{k_p}\ell(2k_p-\ell+1)p_\ell
+\frac{1}{p_{k_p}\sigma^2}\sum_{\ell=1}^{k_q}\ell(2k_p+\ell+1)q_\ell\\
&\qquad=-\frac{1}{p_{k_p}}-\frac{2i}{\sigma^2}-\frac{(2k_p+1)\mu}{p_{k_p}\sigma^2}
+\frac{\sigma^2+\mu^2}{p_{k_p}\sigma^2}\\
&\qquad=-\frac{1}{p_{k_p}}-\frac{2i}{\sigma^2}+\frac{1}{p_{k_p}}=-\frac{2i}{\sigma^2}.
\end{align*}
Thus,
$\bm{\delta}$ is indeed the particular solution in both cases.
Then we may write
\[
d_i=\delta_i+\sum_{j=1}^s\sum_{\ell=1}^{r_j}b^\star_{j,\ell}\,i^{\ell-1}z_j^i,
\]
with the convention that $0^0=1$, and where the $b^\star_{j\ell}$ are coefficients that must be found through boundary conditions, of which $ r$ are needed.  If we again define vectors
$\bm{b}=(b_1,\ldots,b_{ r})=(b^\star_{1,1},\ldots,b^\star_{s,r_s})$
and  $\bm{d}=(d_{2-k_q},d_{3-k_q},\ldots,d_{N+k_p-1})$,
then we may write the general solution as simply $\bm{d}=\bm{\delta}+\bm{Z}\bm{b}$.  Now let the $ r$ required boundary conditions be
\[
d_i=0\quad\text{ for each }i\in\{2-k_q,\ldots,0\}\cup\{N+1,\ldots,N+k_p-1\},
\qquad
\sum_{i=1}^N d_i=v_N-v_0=0,
\]
which may be represented in matrix form as $\bm{A}_N\bm{d}=\bm{0}$.  
Then $\bm{A}_N\bm{\delta}+\bm{A}_N\bm{Z}\bm{b}=\bm{0}$, and hence $\bm{A}_N\bm{Z}\bm{b}=-\bm{A}_N\bm{\delta}$.
We may apply Cramer's rule to write the elements~$b_\ell$ of~$\bm{b}$ as
\begin{align*}
b_\ell=\frac{\det(\bm{A}_N\breve{\bm{Z}}_\ell)}{\det(\bm{A}_N\bm{Z}_{\hphantom\ell})}
=\frac{1}{\det(\bm{A}_N\bm{Z})}\begin{vmatrix}\bm{A}_N\bm{Z}&\bm{A}_N\bm{\delta}\\ \bm{e}_\ell^\T&0\end{vmatrix},
\end{align*}
where $|\cdot|$ denotes the determinant, $\bm{e}_\ell$ is the $\ell$th unit vector of length~$ r$, and $\breve{\bm{Z}}_\ell$ is the matrix formed by replacing the $\ell$th column of~$\bm{Z}$ with~$-\bm{\delta}$.
Then let $Z_{j,\ell}$ denote the $(j,\ell)$th element of~$\bm{Z}$ and write
the $j$th element $(\bm{d})_j=d_{1-k_q+j}$ of~$\bm{d}$ as
\begin{align}
d_{1-k_q+j}&=\delta_{1-k_q+j}+\frac{1}{\det(\bm{A}_N\bm{Z})}\sum_{\ell=1}^{ r}Z_{j,\ell}\begin{vmatrix}\bm{A}_N\bm{Z}&\bm{A}_N\bm{\delta}\\ \bm{e}_\ell^\T&0\end{vmatrix}\label{clever}\\
&=\tilde{\bm{e}}_j^\T\bm{\delta}+\frac{1}{\det(\bm{A}_N\bm{Z})}
\begin{vmatrix}
\bm{A}_N\bm{Z}&\bm{A}_N\bm{\delta}\\
\hphantom{\bm A_N}\mathllap{\tilde{\bm{e}}_j^\T}\bm{Z}&0
\end{vmatrix}
=\frac{1}{\det(\bm{A}_N\bm{Z})}
\begin{vmatrix}
\bm{A}_N\bm{Z}&\bm{A}_N\bm{\delta}\\
\hphantom{\bm A_N}\mathllap{\tilde{\bm{e}}_j^\T}\bm{Z}&\hphantom{\bm A_N}\mathllap{\tilde{\bm{e}}_j^\T}\bm{\delta}
\end{vmatrix}
\notag
\end{align}
where $\tilde{\bm{e}}_j$
is the $j$th unit vector of length $N+ r-1$.
Finally, for each $i\in\{0,\ldots,N\}$, we have
\begin{align*}
v_i=\sum_{j=1}^i d_j=\smashoperator{\sum_{j=k_q}^{i+k_q-1}d_{j-k_q+1}}
&=\frac{1}{\det(\bm{A}_N\bm{Z})}\smashoperator{\sum_{j=k_q}^{i+k_q-1}}\;\;
\begin{vmatrix}
\bm{A}_N\bm{Z}&\bm{A}_N\bm{\delta}\\
\hphantom{\bm A_N}\mathllap{\tilde{\bm{e}}_j^\T}\bm{Z}&\hphantom{\bm A_N}\mathllap{\tilde{\bm{e}}_j^\T}\bm{\delta}
\end{vmatrix}
\\
&=\frac{1}{\det(\bm{A}_N\bm{Z})}
\begin{vmatrix}
\hphantom{\bm{e}_{k_q}^\T}\bm{A}_N\bm{Z}&\hphantom{\bm{e}_{k_q}^\T}\bm{A}_N\bm{\delta}\\
\bm{e}_{k_q}^\T\bm{A}_{\mathrlap{\,i}\hphantom{N}}\bm{Z}&\bm{e}_{k_q}^\T\bm{A}_{\mathrlap{\,i}\hphantom{N}}\bm{\delta}
\end{vmatrix}
=\frac{\det(\bm{A}_{\mathrlap{\,i}\hphantom{N}}^{\,\star}\bm{Z}^{\star})}{\det(\bm{A}_N\bm{Z}^{\hphantom\star})}
\end{align*}
since $\sum_{j=k_q}^{i+k_q-1}\tilde{\bm{e}}_j^\T$ is precisely the $k_q$th row of $\bm{A}_i$.
\end{proof}

\begin{rem}
One of the remarkable features of the calculations from~(\ref{clever}) to the end of the proof is the ability to express the expected absorption times~$v_i$ very similarly to the upper absorption probabilities~$u_i$ obtained in Theorem~\ref{thm:u}.
\end{rem}

As a corollary to Theorem~\ref{thm:v}, we now derive the well-known result for the expected absorption time for the simple random walk \citep[e.g.,][]{feller1968}.

\begin{cor}[to Theorem~\ref{thm:v}]
For the simple random walk
with $p>0$,
the expected absorption times are
\begin{align*}\SADS
v_i=\begin{cases}i(N-i) & \text{ if }p=1/2,\\
\displaystyle\frac{1}{1-2p}\left[i-\frac{N(1-z_1^i)}{1-z_1^N}\right] & \text{ if }p\ne1/2,
\end{cases}
\end{align*}
where $z_1=(1-p)/p$.
\end{cor}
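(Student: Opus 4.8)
The plan is to specialize Theorem~\ref{thm:v} to the simple random walk, where $k=k_p=k_q=1$ and hence $r=k_p+k_q-1=1$. First I would collect the relevant ingredients in this case. The characteristic polynomial is $\chi(z)=pz-(1-p)$, whose unique nonzero root is $z_1=(1-p)/p$ with multiplicity $r_1=1$ (and $z_1=1$ precisely when $p=1/2$); the matrix $\bm{Z}$ collapses to the column vector $(z_1\;\;z_1^2\;\;\cdots\;\;z_1^N)^\T$; the drift and variance are $\mu=p-(1-p)=2p-1$ and $\sigma^2=p+(1-p)-\mu^2=4p(1-p)$; and the particular-solution vector $\bm{\delta}=(\delta_1\;\;\cdots\;\;\delta_N)^\T$ has constant entries $\delta_j=1/(1-2p)$ when $p\ne1/2$ and entries $\delta_j=-2j$ when $p=1/2$ (using $\sigma^2=1$ there).

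Next I would write down the accordion matrices explicitly. Since $k_q-1=k_p-1=0$, the identity blocks $\bm{I}_{k_q-1}$ and $\bm{I}_{k_p-1}$ are empty, so $\bm{A}_N$ is just the all-ones row vector $\bm{1}_N^\T$ and the extended accordion matrix is the $2\times N$ matrix
\[
\bm{A}^\star_i=\begin{pmatrix}\bm{1}_N^\T\\ \bm{1}_i^\T\;\;\bm{0}_{N-i}^\T\end{pmatrix}.
\]
Hence $\det(\bm{A}_N\bm{Z})=\sum_{j=1}^N z_1^j$ is a scalar, and $\bm{A}^\star_i\bm{Z}^\star$ is the $2\times 2$ matrix with first row $\left(\sum_{j=1}^N z_1^j\;\;\sum_{j=1}^N\delta_j\right)$ and second row $\left(\sum_{j=1}^i z_1^j\;\;\sum_{j=1}^i\delta_j\right)$. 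Expanding this $2\times 2$ determinant and dividing by $\det(\bm{A}_N\bm{Z})$, Theorem~\ref{thm:v} yields
\[
v_i=\sum_{j=1}^i\delta_j-\frac{\sum_{j=1}^i z_1^j}{\sum_{j=1}^N z_1^j}\sum_{j=1}^N\delta_j.
\]

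Finally I would substitute the two forms of $\bm{\delta}$. For $p=1/2$ one has $z_1=1$, so $\sum_{j=1}^m z_1^j=m$ and $\sum_{j=1}^m\delta_j=-m(m+1)$; substitution gives $v_i=-i(i+1)+i(N+1)=i(N-i)$. For $p\ne1/2$ the entries $\delta_j$ are constant, so $\sum_{j=1}^m\delta_j=m/(1-2p)$, and the geometric-series identity $\frac{\sum_{j=1}^i z_1^j}{\sum_{j=1}^N z_1^j}=\frac{1-z_1^i}{1-z_1^N}$ (valid since $z_1\ne1$) gives $v_i=\frac{1}{1-2p}\left[i-\frac{N(1-z_1^i)}{1-z_1^N}\right]$, as claimed. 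No substantive obstacle arises here: the only points requiring mild care are the collapse of the offset subscripts and of the empty identity blocks $\bm{I}_{k_q-1},\bm{I}_{k_p-1}$ when $k_p=k_q=1$, and the routine geometric-series manipulation converting the determinant ratio into the closed form quoted from \citet{feller1968}.
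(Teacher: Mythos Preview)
Your proposal is correct and follows essentially the same approach as the paper's own proof: both specialize Theorem~\ref{thm:v} to $k_p=k_q=1$, identify the resulting $2\times2$ determinant for $\bm{A}_i^\star\bm{Z}^\star$ and the scalar $\det(\bm{A}_N\bm{Z})=\sum_{j=1}^N z_1^j$, and then substitute the two forms of $\bm{\delta}$ to recover the classical formulas. Your explicit remark on the collapse of the empty identity blocks and the intermediate expression $v_i=\sum_{j=1}^i\delta_j-\frac{\sum_{j=1}^i z_1^j}{\sum_{j=1}^N z_1^j}\sum_{j=1}^N\delta_j$ are minor expository additions, not a different route.
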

\begin{proof}
Note that in the $k_p=k_q=1$ case, we again have $\det(\bm{A}_N\bm{Z})=\sum_{j=1}^Nz_1^j$, while
\begin{align*}
\det(\bm{A}_i^{\star}\bm{Z}^{\star})=\begin{vmatrix}
\sum_{j=1}^Nz_1^j & \sum_{j=1}^N\delta_j\\
\sum_{j=1}^iz_1^j & \sum_{j=1}^i\delta_j
\end{vmatrix}.
\end{align*}
If $p=1/2$, then $\mu=0$ and $\sigma^2=1$, so $\delta_i=-2i$.  Then since $(1-p)/p=1$, we have
\begin{align*}
\det(\bm{A}_i^{\star}\bm{Z}^{\star})=\begin{vmatrix}
N & -N(N+1)\\
i & -i(i+1)
\end{vmatrix}=N^2i-Ni^2=Ni(N-i),
\end{align*}
and since $\det(\bm{A}_N\bm{Z})=N$, we have $v_i=i(N-i)$, as required.  If instead $p\ne1/2$, then we have $\delta_i=1/(1-2p)$, and thus
\begin{align*}
\det(\bm{A}_i^{\star}\bm{Z}^{\star})=\begin{vmatrix}
\sum_{j=1}^Nz_1^j & \sum_{j=1}^N\delta_j\\
\sum_{j=1}^{i\vphantom{k^{k^k}}}z_1^j & \sum_{j=1}^{i\vphantom{k^{k^k}}}\delta_j
\end{vmatrix}=\frac{i\sum_{j=1}^Nz_1^j-N\sum_{j=1}^iz_1^j}{1-2p}.
\end{align*}
Then we have
\begin{align*}\SADS
v_i=\frac{i\sum_{j=1}^Nz_1^j-N\sum_{j=1}^iz_1^j}{(1-2p)\sum_{j=1}^Nz_1^j}=\frac{1}{1-2p}\left(i-\frac{N\sum_{j=1}^iz_1^j}{\sum_{j=1}^Nz_1^j}\right),
\end{align*}
and the required result follows immediately from the fact that
\[
\frac{\sum_{j=1}^iz_1^j}{\sum_{j=1}^Nz_1^j}=\frac{1-z_1^i}{1-z_1^N}
\]
for
$p\ne1/2$.
\end{proof}

\section{Recurrence/Transience and Stationary Distributions}
\label{sec:rtsd}

For the simple random leap with absorbing barriers, it may be trivially noted that the absorbing states are positive recurrent while the other states are transient.
So we return to the
non-absorbing random walks and leaps on the infinite state space~$\mathbb{Z}$.
This state space
provides a more interesting setting for the consideration of recurrence or transience, as well as stationary distributions or the lack thereof.  To begin, recall
the following standard result on simple random walks.

\begin{lem}[e.g., \citeauthor{feller1968}, \citeyear{feller1968}; \citeauthor{karlin1998}, \citeyear{karlin1998}]
\label{lem:walk-rtsd}
The simple random walk is transient if $p\ne1/2$ (equivalently $\mu\ne0$) and null recurrent if $p=1/2$ (equivalently $\mu=0$).  In particular, the simple random walk has no stationary distribution.
\end{lem}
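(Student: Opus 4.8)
The plan is to prove the two claims for the simple random walk---the recurrence/transience dichotomy and the nonexistence of a stationary distribution---by elementary first-principles arguments that do not rely on the machinery developed for random leaps (which is overkill here). I will treat the case $p \ne 1/2$ and $p = 1/2$ separately.

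First I would dispatch the recurrence/transience claim via the return probability. For a simple random walk on $\mathbb{Z}$ started at $0$, let $f$ be the probability that the walk ever returns to $0$. A standard first-step decomposition together with the gambler's ruin / absorption probabilities already recorded in Lemma~\ref{lem:u-1} gives, in the case $p > 1/2$, that the probability of ever reaching state $-1$ from state $0$ is $(1-p)/p < 1$, while the probability of ever reaching state $1$ from state $0$ is $1$; symmetrically for $p < 1/2$. Conditioning on the first step, one obtains $f = p\cdot P(\text{reach }0\text{ from }1) + (1-p)\cdot P(\text{reach }0\text{ from }{-1})$, and by translation invariance $P(\text{reach }0\text{ from }1) = P(\text{reach }0\text{ from }{-1}\text{ going right}) $ etc.; plugging in the gambler's-ruin values yields $f = 2\min\{p,1-p\} < 1$ when $p \ne 1/2$, so $0$ is transient, and since the chain is irreducible on $\mathbb{Z}$, the whole chain is transient. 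When $p = 1/2$, the same computation gives $f = 1$, so the chain is recurrent; to see it is null recurrent rather than positive recurrent, I would either invoke the nonexistence of a stationary distribution (proved next), which for an irreducible recurrent chain forces null recurrence, or directly estimate the expected return time via $P_0(X_{2n} = 0) = \binom{2n}{n}2^{-2n} \sim (\pi n)^{-1/2}$ and $\sum_n P_0(X_n = 0) = \infty$ while the expected return time $= 1/\lim (\text{stationary mass})$ diverges.

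Next, the nonexistence of a stationary distribution. Suppose for contradiction that $\bm\pi = (\pi_i)_{i\in\mathbb{Z}}$ is a probability distribution satisfying the stationarity equations $\pi_j = p\,\pi_{j-1} + (1-p)\,\pi_{j+1}$ for all $j \in \mathbb{Z}$. This is a second-order linear recurrence in $j$ with characteristic polynomial $(1-p)z^2 - z + p = 0$, whose roots are $z = 1$ and $z = p/(1-p)$. Hence every solution has the form $\pi_j = \alpha + \beta\,(p/(1-p))^j$. If $p \ne 1/2$ the two roots are distinct, and either way a nonzero constant term $\alpha$ or a geometric term with ratio $\ne 1$ is incompatible with summability over all of $\mathbb{Z}$: the constant term must vanish (else $\sum_j |\pi_j| = \infty$), and the geometric term $\beta (p/(1-p))^j$ is summable over $j \in \mathbb{Z}$ only if $\beta = 0$ (it blows up in one of the two directions). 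If $p = 1/2$ the characteristic polynomial has a double root at $1$, so $\pi_j = \alpha + \beta j$, which again cannot be a nonzero summable sequence. In all cases $\bm\pi \equiv 0$, contradicting $\sum_j \pi_j = 1$. Therefore no stationary distribution exists.

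The step I expect to require the most care is making the summability argument over the doubly-infinite index set $\mathbb{Z}$ fully rigorous---one must be careful that the general solution of the recurrence genuinely captures all summable $\bm\pi$ (it does, since the recurrence holds at every $j$ and a solution is determined by any two consecutive values) and that ruling out each basis solution covers both the $p = 1/2$ and $p \ne 1/2$ regimes uniformly. The recurrence/transience part is comparatively routine given Lemma~\ref{lem:u-1}, and indeed these classical facts could simply be cited from \citet{feller1968} or \citet{karlin1998}; I would present the short self-contained argument above both for completeness and because it foreshadows the role the characteristic polynomial will play for general random leaps.
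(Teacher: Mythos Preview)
Your proposal is correct, but note that the paper does not actually prove this lemma: it is stated with citation to \citet{feller1968} and \citet{karlin1998} and is recovered only as the $k=1$ special case of Theorem~\ref{thm:rtsd}. So the relevant comparison is between your argument and the proof of Theorem~\ref{thm:rtsd}, and the two are genuinely different. For transience when $\mu\ne0$, the paper invokes the strong law of large numbers to get $X_n\to\pm\infty$ almost surely, whereas you compute the return probability $f=2\min\{p,1-p\}$ via the gambler's-ruin limit from Lemma~\ref{lem:u-1}; your route is more explicit for $k=1$ but does not generalize to leaps, while the SLLN argument works uniformly in~$k$. For recurrence when $\mu=0$, the paper uses a Chebyshev-based counting argument to show $\sum_n P(X_n=0)=\infty$, while you again use gambler's ruin to get $f=1$ directly; both are fine, though your path to null recurrence (via nonexistence of a stationary distribution) matches the paper's logic. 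For the nonexistence of a stationary distribution, the paper argues by translation invariance---any stationary $\bm\pi$ would have to equal all its shifts, forcing a uniform distribution on~$\mathbb{Z}$---whereas you solve the stationarity recurrence $(1-p)z^2-z+p=0$ explicitly and rule out summable solutions. Your approach nicely foreshadows the characteristic-polynomial theme of the paper, but the translation-invariance argument is what extends cleanly to arbitrary~$k$.
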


We now prove that the
results of Lemma~\ref{lem:walk-rtsd} carry over to the general case of random leaps as well.

\begin{thm}\label{thm:rtsd}
The simple random leap is transient if $\mu\ne0$ and null recurrent if $\mu=0$.  In particular, the simple random leap has no stationary distribution.
\end{thm}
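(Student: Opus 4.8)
The plan is to recognize the simple random leap as nothing more than a random walk on $\mathbb{Z}$ with independent, identically distributed increments, so that the classical theory behind Lemma~\ref{lem:walk-rtsd} applies. Since the transition probabilities of Definition~\ref{defn:leap} depend on $i$ and $j$ only through $j-i$, one may write $X_n=X_0+Y_1+\cdots+Y_n$, where the $Y_m$ are i.i.d.\ with $P(Y_m=\ell)=p_\ell$ and $P(Y_m=-\ell)=q_\ell$ for $\ell\in\{1,\ldots,k\}$. These increments are bounded by $k$, hence integrable with finite variance, and $E(Y_m)=\sum_{j=1}^{k}j(p_j-q_j)=\mu$, $\Var(Y_m)=\sigma^2$. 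I would then treat the recurrence/transience dichotomy by cases on $\mu$, and dispatch the stationary-distribution claim by a separate, direct argument valid for either sign of~$\mu$.

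For the case $\mu\ne0$, the strong law of large numbers gives $X_n/n\to\mu$ almost surely, so $|X_n|\to\infty$ almost surely; hence every state is visited only finitely often with probability one, which is transience (no irreducibility is needed). For the case $\mu=0$, the chain must be shown recurrent: a mean-zero random walk on $\mathbb{Z}$ with finite variance is recurrent, so $\sum_{n\ge0}P^n(i,i)=\infty$ for every state~$i$. If a more self-contained treatment is wanted, one can estimate the Green's function through the characteristic function $\varphi(t)=E(e^{itY_1})$ via $\sum_{n\ge0}r^nP^n(i,i)=\frac{1}{2\pi}\int_{-\pi}^{\pi}(1-r\varphi(t))^{-1}\,dt$ together with the expansion $\varphi(t)=1-\tfrac12\sigma^2t^2+o(t^2)$ near the origin, which makes the integrand non-integrable and forces divergence as $r\uparrow1$. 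That the chain is \emph{null} recurrent rather than positive recurrent then follows from the stationary-distribution claim below, since a positive recurrent communicating class would carry a stationary distribution, extendable by zero to all of~$\mathbb{Z}$.

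It remains to rule out a stationary distribution, which I would prove from first principles by a maximum-principle argument insensitive to the sign of~$\mu$. Suppose $\bm\pi$ is a stationary distribution. Since $\sum_i\pi_i=1<\infty$ we have $\pi_i\to0$ as $|i|\to\infty$, so $M:=\sup_i\pi_i$ is positive and attained, say at $i_0$. Stationarity and the identity $\sum_jP(j,i_0)=\sum_{m\in\mathbb{Z}}P(Y_1=m)=1$ give $M=\pi_{i_0}=\sum_j\pi_jP(j,i_0)\le M$, so $\pi_j=M$ for every $j$ with $P(j,i_0)>0$. By Assumption~\ref{assum:not-monotone} we have $p_{k_p}>0$, so $P(i_0-k_p,i_0)=p_{k_p}>0$ and hence $\pi_{i_0-k_p}=M$; iterating, $\pi_{i_0-nk_p}=M$ for every $n\ge0$. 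This yields infinitely many entries equal to $M>0$, contradicting $\sum_i\pi_i=1$. Hence no stationary distribution exists, which in particular forbids positive recurrence and completes the argument.

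I expect the recurrence direction ($\mu=0$) to be the main obstacle: the transience argument and the maximum-principle argument are essentially immediate, whereas recurrence requires either citing the classical one-dimensional recurrence criterion (or Chung--Fuchs) as a black box or carrying out the Fourier/Green's-function estimate with some care. In particular, the support of $Y_1$ may have greatest common divisor larger than~$1$, so $P^n(i,i)$ may vanish for a periodic set of~$n$ and the chain may be reducible; neither affects the conclusion, since the singularity of the integrand at $t=0$ still forces the Green's function to diverge, but these are the kinds of details a careful write-up must acknowledge.
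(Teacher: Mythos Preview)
Your proposal is correct, and the transience argument via the strong law matches the paper's exactly. The other two parts differ. For recurrence when $\mu=0$, you invoke Chung--Fuchs (or the Fourier/Green's-function route), whereas the paper gives an elementary proof from first principles: assuming state~$0$ transient with $a=\sum_n P(X_n=0)<\infty$, it shows $\sum_n P(X_n=i)\le a$ for every~$i$ by a first-passage decomposition, then applies Chebyshev's inequality to get $P(|X_n|\le\sqrt{2M\sigma^2})\ge\tfrac12$ for $n\le M$ and extracts a contradiction from a suitably large choice of~$M$. The paper's argument avoids characteristic functions altogether, and indeed remarks afterward that the result is a special case of \citet{chung1951}, which is precisely your route. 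For the nonexistence of a stationary distribution, the paper argues by translation invariance---any shift $\pi^{(j)}_i=\pi_{i+j}$ is again stationary, so uniqueness (after reducing to the irreducible case via the gcd of the support) forces $\bm\pi$ constant---whereas your maximum-principle argument exploits double stochasticity of the transition matrix directly and sidesteps both the gcd case split and any appeal to uniqueness of stationary measures. Both stationary-distribution arguments are short; yours is arguably the cleaner of the two.
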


\begin{proof}
We first consider the transience or recurrence the simple random leap.  To do so, it
suffices to show whether state~$0$ is recurrent or transient, so we condition on $X_0=0$ throughout the proof.  First suppose $\mu\ne0$.  Then by the strong law of large numbers, $n^{-1}X_n\to\mu$ a.s., and hence $X_n\to\pm\infty$ a.s.\ according to the sign of $\mu$.  Now observe that state~$0$ is recurrent if and only if there exists a.s.\ a subsequence~$X_{m_n}$ of~$X_n$ such that $X_{m_n}=0$ for every~$n\ge0$.  Since $X_n\to\pm\infty$ a.s., such a subsequence a.s.\ does not exist.  Therefore state~$0$ is transient.

Instead suppose $\mu=0$.  In this case, we suppose that state~$0$ is transient and show that this leads to a contradiction.  Define $a=\sum_{n=0}^\infty P(X_n=0)$, and note that $a<\infty$ since state~$0$ is transient.  Also, for any $i\in\mathbb{Z}$, define $T_i=\min\{n\ge0:X_n=i\}$, and observe that
\begin{align*}
\sum_{n=0}^\infty P(X_n=i)
&=
\sum_{n=0}^\infty\sum_{t=0}^\infty P\left(X_n=i\Given T_i=t\right)\;P(T_i=t)\\
&=
\sum_{n=0}^\infty\sum_{t=0}^n P(X_{n-t}=0)\;P(T_i=t)\\
&=
\sum_{t=0}^\infty\sum_{n=t}^\infty P(X_{n-t}=0)\;P(T_i=t)
=
\left[\sum_{t=0}^\infty P(T_i=t)\right]\left[\sum_{n=0}^\infty P(X_{n}=0)\right]
\le a.
\end{align*}
Hence, $\sum_{n=0}^\infty P(X_n=i)\le a$ for all~$i\in\mathbb{Z}$.
Now define $M=\lceil\max\{100\, a^2\sigma^2,\;1/\sigma^2\}\rceil$ where $\lceil\,\cdot\,\rceil$ denotes the ceiling function, and let $w=\lfloor{\sqrt{2M\sigma^2}}\rfloor$, where $\lfloor\,\cdot\,\rfloor$ denotes the floor function.  Note that $w\ge1$.
Next, observe that $E(X_n)=0$ and $\text{Var}(X_n)=n\sigma^2\le M\sigma^2$ for every $n\in\{1,\ldots,M\}$.  Then by Chebyshev's inequality,
\bas
P\left(|X_n|\le\sqrt{2M\sigma^2}\right)\ge\frac{1}{2}
\eas
for every $n\in\{0,\ldots,M\}$.
Then
\begin{align*}
\frac{M}{2}
\le\sum_{n=1}^M P\left(|X_n|\le\sqrt{2M\sigma^2}\right)
&\le\sum_{n=0}^\infty P\left(|X_n|\le\sqrt{2M\sigma^2}\right)\\
&=\sum_{n=0}^\infty\sum_{i=\mathrlap{-w}\;}^{w}P\left(X_n=i\right)\\
&=\sum_{\;\mathllap{i=}-w}^{w}\sum_{n=0}^\infty P\left(X_n=i\right)
\le(2w+1)a\le3wa\le3a\sqrt{2M\sigma^2},
\end{align*}
from which it follows that $\sqrt{M}\le6a\sqrt{2\sigma^2}$, and thus
$M\le 72\,a^2\sigma^2$, which contradicts the definition of $M$.  Therefore state~$0$ is recurrent.

Finally, we show that the simple random leap has no stationary distribution, which also establishes null recurrence when $\mu=0$.  Suppose to the contrary that $\bm{\pi}$ is a stationary distribution, and let $g=\gcd\{j:p_j+q_j>0\}$, where $\gcd$ denotes the greatest common divisor of a set of integers.  If $g>1$ then we may partition the state space~$\mathbb{Z}$ into $g$ communicating classes and apply the argument below to each class individually.  Thus, we may assume without loss of generality that $g=1$, in which case the chain is irreducible.
For every $j\in\mathbb{Z}$, define the distribution~$\bm{\pi}^{(j)}$ according to $\pi^{(j)}_i=\pi_{i+j}$ for all $i\in\mathbb{Z}$, and note that $\bm{\pi}^{(j)}$ is also a stationary distribution by the chain's spatial homogeneity.
Since the chain is irreducible, there exists at most one stationary distribution, which implies that $\pi_{i+j}=\pi_i$ for all $i,j\in\mathbb{Z}$.  However, this is a contradiction since there is no uniform
probability
distribution on the infinite set~$\mathbb{Z}$.  Thus, no stationary distribution exists.
\end{proof}

\begin{rem}
It should be noted that
the transience/recurrence result of
Theorem~\ref{thm:rtsd} is a special case of a more general result.  In particular, \citet{chung1951} proved that the Markov chain formed by the sequence of sums of i.i.d.\ random vectors with finite covariance is recurrent if and only if the mean is zero and the dimension is two or less.
Their
result can also be regarded as an extension of the renowned result \citep{polya1921} that the simple random walk
on~$\mathbb{Z}^d$
is recurrent if and only if $d<3$.
\end{rem}

\section{Reflecting Random Leaps and Stationary \\Distributions}
\label{sec:reflect}

Section~\ref{sec:rtsd} demonstrated
that the simple random walk and leap have no stationary distribution.  For the absorbing random walks and leaps of Section~\ref{sec:leap}, the stationary distribution exists but is somewhat trivial.
However, there exist other variants of the
classical
random walk for which a nontrivial stationary distribution exists.
These variants feature a state space with either one or two endpoints that are reflecting, in the sense that the chain can move from either barrier back to the interior of the state space.  It is
thus natural to consider the extension of this idea to random leaps in order to investigate the existence and properties of similar stationary distributions
in the random leap setting.  Such reflecting random leaps may also be an interesting topic of study in their own right.
We study reflecting random leaps in the two distinct settings in which the state space is (i)~finite or (ii)~infinite.

\subsection{Finite State Space}
\label{subsec:reflect-finite}

The behavior of the reflecting random leap is conceptually straightforward:  It is a simple random leap modified so that at each step, any probability of moving to a state not in $\{0,\ldots,N\}$ is instead reassigned to the nearest barrier state, i.e., $0$ or $N$, as defined formally below.

\begin{defn}\label{defn:reflect-finite}
Let $\{X_n^\star:n\ge0\}$ be a simple random leap, and let $N\ge k_p+k_q$ be an integer.  Now define a Markov chain $\{X_n:n\ge0\}$ with state space $\{0,\ldots,N\}$ and transition probabilities
\bas\SADS
P\left(X_{n+1}=j\Given X_n=i\right)=\begin{cases}
P\left(X_{n+1}^\star\le 0\Given X_n^\star=i\right) & \text{ if }j=0,\\
P\left(X_{n+1}^\star=j\Given X_n^\star=i\right) & \text{ if }j\in\{1,\ldots,N-1\},\\
P\left(X_{n+1}^\star\ge N\Given X_n^\star=i\right) & \text{ if }j=N.\\
\end{cases}
\eas
We call the resulting Markov chain a \emph{reflecting random leap}.  (We may also attach the modifier \emph{two-sided} to clearly distinguish it from the one-sided reflecting random leap defined in Definition~\ref{defn:reflect-infinite}
below).
\end{defn}

Note that the reflecting random leap as defined in Definition~\ref{defn:reflect-finite} does not ``bounce back'' when it would otherwise breach the barrier (as its name might suggest).  Instead, it is absorbed by the barrier and may reflect back to the interior of the state space on the next step. In this sense, Definition~\ref{defn:reflect-finite} above is more analogous to the classical reflecting random walk.
We also make one additional assumption
(related to irreducibility)
when working with reflecting simple random leaps, which we will impose for the remainder of Section~\ref{sec:reflect}.

\begin{assum}\label{assum:gcd}
The greatest common divisor of the set $\{j:p_j+q_j>0\}$ is $1$.
\end{assum}

Since the reflecting random leap has a finite state space $\{0,\ldots,N\}$, it is clear that a stationary distribution exists.  Assumption~\ref{assum:gcd}
above
ensures that the chain is irreducible, which in turn guarantees that the stationary distribution is unique.
We now provide
two definitions.

\begin{defn}
The \emph{reverse characteristic polynomial} of a random leap is
\[
\psi(z)=\smashoperator{\sum_{j=0}^{2k-1}} \gamma_{j}\,z^{j},
\]
where the coefficients
$\gamma_{j}$ are given by
\[
\gamma_{j}=\begin{cases}
-\sum_{\ell=k-j}^k p_\ell & \text{ if }0\le j\le k-1,\\
\phantom{-}\sum_{\ell=j-k+1}^k q_\ell & \text{ if }k\le j\le 2k-1.
\end{cases}
\]
\label{defn:char-rev}
\end{defn}

The close connection between a random leap's characteristic and reverse characteristic polynomials can be seen by noting
that $\gamma_{j}=-c_{2k-j-1}$ for each $j\in\{0,\ldots,2k-1\}$, which implies that $\psi(z)=z^{2k-1}\,\chi(z^{-1})$ for all $z\ne0$.  Then the nonzero roots of $\psi(z)$ are precisely $z_1^{-1},\ldots,z_{s\vphantom1}^{-1}$, the multiplicative inverses of the nonzero roots of $\chi(z)$.
Let $y_1,\ldots,y_s$ denote the values $z_1^{-1},\ldots,z_{s\vphantom1}^{-1}$ sorted in ascending order of absolute value (with ties broken arbitrarily).  For each $j\in\{1,\ldots,s\}$, let~$\bm{Y}^{(j)}$ be the $(N+ r-1)\times  r_j$ matrix with elements
\begin{align}
Y^{(j)}_{i,\ell}=(i+1-k_p)^{\ell-1}y_j^{i+1-k_p},
\label{Y-elements}
\end{align}
with the convention that $0^0=1$.
Next, let $\bm{Y}$ be the $(N+ r-1)\times r$ matrix
\begin{align}
\bm{Y}=\begin{pmatrix}\bm{Y}^{(1)}&\bm{Y}^{(2)}&\cdots&\bm{Y}^{(s)}\end{pmatrix}.
\label{Y-matrix}
\end{align}
We now provide
the second of the aforementioned two definitions.

\begin{defn}
For each $i\in\{0,\ldots,N\}$ the $i$th \emph{modified accordion matrix} $\smash{\bm{A}_i^{\dagger}}$ is the \mbox{$ r\times(N+ r-1)$} matrix
\bas\SADS
\bm{A}_i^{\dagger}=\begin{cases}\begin{pmatrix}\bm{I}_{k_p-1}&\bm{0}&\bm{0}&\bm{0}\\ \bm{0}&\bm{1}_i^\T&\bm{0}_{N-i+1}^\T&\bm{0}\\ \bm{0}&\bm{0}_i^\T&\bm{1}_{N-i+1}^\T&\bm{0}\\ \bm{0}&\bm{0}&\bm{0}&\bm{I}_{k_q-2}\end{pmatrix}&\text{ if }k_q\ge2,\\ \vspace{-1em}\\
\begin{pmatrix}\bm{I}_{k_p-1}&\bm{0}&\bm{0}\\ \bm{0}&\bm{1}_i^\T&\bm{0}_{N-i}^\T&\end{pmatrix}&\text{ if }k_q=1.\end{cases}
\eas
\end{defn}
\begin{rem}
Note that the modified accordion matrix~$\smash{\bm{A}_i^{\dagger}}$ may be obtained from the accordion matrix~$\bm{A}_i$ by first reversing the roles of $k_p$ and $k_q$ in
Definition~\ref{defn:accordion}
and then adding $\begin{pmatrix}\bm{0}_{k_p+i-1}^\T&\bm{1}_{N-i}^\T&\bm{0}_{k_q-1}^\T\end{pmatrix}$ to the $(k_p+1)$st row if this row exists.
\end{rem}

Also, let $\smash{\bm{A}^\ddagger}$ be the matrix formed from $\smash{\bm{A}_i^\dagger}$ by first adding the $k_p$th row to the $(k_p+1)$st row (if this row exists), then replacing the $k_p$th row with $\bm{\eta}^\T$, noting that the resulting matrix no longer depends on~$i$.  

Now let $\bm{\eta}$ be the vector of length~$N+ r-1$ with elements
\begin{align}
\eta_j=\begin{cases}\sum_{\ell=j-k_p+1}^{k_q} (\ell-j+k_p)q_\ell&\text{ if }k_p\le j\le r,\\
0&\text{ otherwise}.\end{cases}
\label{eta}
\end{align}
Next, for each $i\in\{0,\ldots,N\}$, let $\bm{W}_i$ be the $( r+1)\times( r+1)$ matrix
\[
\bm{W}_i=\begin{pmatrix}\bm{A}_i^\dagger\bm{Y}&\bm{e}_{k_p-1}-\bm{e}_{k_p} \\ \hphantom{\bm{A}_i^\dagger}\mathllap{\bm{\eta}^\T}\bm{Y}&\sum_{\ell=1}^{k_p}p_\ell-\sum_{\ell=1}^{k_q}\ell q_\ell\end{pmatrix},
\]
where $\bm{e}_{k_p-1}$ and $\bm{e}_{k_p}$ denote the $(k_p-1)$st and $k_p$th unit vectors of length~$ r$, taking the former to be a zero vector when $k_p=1$.
Finally, we state one additional lemma.

\begin{lem}\label{lem:stationary}
Let $\bm{P}$ be the transition matrix for a recurrent and irreducible Markov chain with countable state space~$S$.
If
$\bm{m}=\{m_i:i\in S\}\ne\bm0$ is a
complex-valued vector
such that $\bm{m}\bm{P}=\bm{m}$ and 
$\sum_{i\in S}m_i$ is well-defined
with $|\sum_{i\in S}m_i|<\infty$, then
$\sum_{i\in S}m_i\ne0$, and
$(\sum_{i\in S}m_i)^{-1}\,\bm{m}$ is the unique stationary distribution of the Markov chain.
\end{lem}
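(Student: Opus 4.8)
The plan is to read the stationary distribution off directly from the invariance relation $\bm m\bm P=\bm m$ by invoking the Ces{\`a}ro ergodic theorem for Markov chains. Since $\sum_{i\in S}m_i$ is a well-defined sum over the \emph{unordered} index set~$S$, it converges unconditionally and hence absolutely, so $\sum_{i\in S}|m_i|<\infty$; this holds whether $\bm m$ is real- or complex-valued, so no splitting into real and imaginary parts is needed. First I would iterate $\bm m\bm P=\bm m$ to obtain $\sum_{i\in S}m_iP^{(k)}_{ij}=m_j$ for every $k\ge0$ and every $j\in S$, where $P^{(k)}_{ij}$ denotes the $k$-step transition probability, and then average over $k=0,\ldots,n-1$:
\[
m_j=\sum_{i\in S}m_i\left(\frac1n\sum_{k=0}^{n-1}P^{(k)}_{ij}\right)\qquad\text{for every }n\ge1,\ j\in S.
\]
Because the chain is irreducible and recurrent, the Ces{\`a}ro averages $\frac1n\sum_{k=0}^{n-1}P^{(k)}_{ij}$ converge as $n\to\infty$ to a limit $L_j=1/E_j(T_j)\in[0,1]$ that does not depend on~$i$, where $T_j$ is the first return time to~$j$; this follows from the regenerative/renewal structure of excursions from~$j$ and holds regardless of periodicity. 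Each summand above is dominated in modulus by~$|m_i|$ and $\sum_{i\in S}|m_i|<\infty$, so dominated convergence yields $m_j=\big(\sum_{i\in S}m_i\big)L_j$ for every $j\in S$.

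The remaining steps are short. Writing $S_m=\sum_{i\in S}m_i$ and $\bm L=(L_j)_{j\in S}$: if $S_m=0$ then $m_j=0$ for all~$j$, contradicting $\bm m\ne\bm0$, so $S_m\ne0$, which is the first assertion. Hence $L_j=m_j/S_m$, and since $\bm m\ne\bm0$ there is some $j_0$ with $L_{j_0}>0$, i.e.\ $E_{j_0}(T_{j_0})<\infty$; as positive recurrence is a class property of an irreducible chain, the chain is positive recurrent and $\bm L$ is its unique stationary distribution (one checks directly that $L_j\ge0$, that $\sum_j L_j=S_m^{-1}\sum_j m_j=1$, and that $\bm L$ is invariant as a scalar multiple of~$\bm m$; uniqueness is the standard fact for irreducible chains). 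Therefore $S_m^{-1}\bm m=\bm L$ is the unique stationary distribution, as claimed.

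The main obstacle is pinning down the two analytic ingredients: the precise form of the Ces{\`a}ro ergodic theorem, namely $\frac1n\sum_{k=0}^{n-1}P^{(k)}_{ij}\to1/E_j(T_j)$ for every irreducible recurrent chain, including periodic ones (the un-averaged limit $P^{(k)}_{ij}\to1/E_j(T_j)$ can fail under periodicity, so the Ces{\`a}ro version is essential here), and the interchange of $\lim_{n\to\infty}$ with the sum over~$i$, which is exactly where absolute summability of~$\bm m$ is used. Both are standard and I would simply cite them rather than reprove them. One minor point worth flagging: the recurrence hypothesis is not strictly necessary, since for a transient chain the same Ces{\`a}ro averages tend to~$0$ and again force $\bm m=\bm0$; but recurrence is available in every application of this lemma, so I would keep it in the statement.
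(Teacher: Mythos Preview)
Your proof is correct and takes a genuinely different route from the paper's. The paper argues by contradiction on the sign pattern of $\Re(\bm m)$ (and separately $\Im(\bm m)$): assuming both $S_+=\{i:\Re(m_i)>0\}$ and $S_-=\{i:\Re(m_i)<0\}$ are nonempty, it uses irreducibility to find $n$ with $P^n_{s_-,s_+}>0$ and derives the contradictory inequality $\sum_{i\in S_+}\Re(m_i)<\sum_{i\in S_+}\Re(m_i)$, concluding that $\pm\Re(\bm m)$ is a nonnegative invariant measure and then invoking uniqueness of the stationary measure for recurrent irreducible chains. Your approach instead iterates $\bm m\bm P=\bm m$, cites the Ces\`{a}ro ergodic theorem $\frac1n\sum_{k=0}^{n-1}P^{(k)}_{ij}\to L_j=1/E_j(T_j)$, and passes to the limit under the sum via dominated convergence, obtaining $m_j=S_mL_j$ in one stroke with no real/imaginary splitting. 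Your argument is cleaner and yields positive recurrence as an immediate byproduct; the paper's argument is more self-contained, needing only the (easier) existence and uniqueness of invariant measures for recurrent chains rather than the full ergodic limit. Both rely on the same reading of ``$\sum_{i\in S}m_i$ well-defined'' as absolute summability, which is the standard meaning for sums over unordered index sets and is also how the paper uses the hypothesis.
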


\begin{proof}
The existence and uniqueness of a stationary measure~$\bm{\pi}^\star$ follow immediately from the recurrence and irreducibility of the Markov chain.
Now note that since the 
probabilities~$P_{i,j}$ in~$\bm{P}$ are real,
both $\Re(\bm{m})$ and $\Im(\bm{m})$ are stationary signed measures
on~$S$,
where $\Re(\cdot)$ and~$\Im(\cdot)$ denote, respectively, the real and imaginary parts (taken elementwise).
Note that at least one of these signed measures, say $\Re(\bm{m})$, is
not~$\bm0$ since $\bm m\ne\bm0$.
Then it suffices to show that either $\Re(\bm{m})$ or $-\Re(\bm{m})$ is a nonnegative measure
on~$S$.
(The proof for the imaginary part is in essence identical.)  We now prove this result by contradiction.  

Suppose neither $\Re(\bm{m})$ nor $-\Re(\bm{m})$ is a measure.
Then the sets
$S_+=\{i\in S:\Re(m_i)>0\}$ and $S_-=\{i\in S:\Re(m_i)<0\}$ are both nonempty.  Let $s_+\in S_+$ and $s_-\in S_-$.
Since the Markov chain is irreducible, there exists an integer
$n\ge1$
such that $P^n_{s_-,s_+}>0$,
where $P^n_{s_-,s_+}$ denotes the $(s_-,s_+)$th element of $\bm{P}^n$.
Then
\begin{align*}
\smashoperator{\sum_{i\in S_+}}\Re(m_i)
&=\sum_{\mathllap{i}\in S_+}\sum_{j\in \mathrlap{S_+}}\Re(m_j)\,P^n_{j,i}
+\sum_{\mathllap{i}\in S_+}\sum_{j\in \mathrlap{S_-}}\Re(m_j)\,P^n_{j,i}\\
&=\smashoperator{\sum_{j\in S_+}}\Re(m_j)\smashoperator{\sum_{i\in S_+}}P^n_{j,i}
+\smashoperator{\sum_{j\in S_-}}\Re(m_j)\smashoperator{\sum_{i\in S_+}}P^n_{j,i}\\
&\le\smashoperator{\sum_{j\in S_+}}\Re(m_j)+\Re(m_{s_-})\,
P^n_{s_-,s_+}
<\smashoperator{\sum_{j\in S_+}}\Re(m_j),
\end{align*}
a contradiction.
(Note that both terms in the expression on the right-hand side of the first line above are finite since $\sum_{i\in S}m_i$ is well-defined with $|\sum_{i\in S}m_i|<\infty$.
Note also that the interchange of the summation order in the second equality above is justified by Tonelli's theorem, since the summands of the first term are all nonnegative and the summands of the second term are all nonpositive.)
Thus, either $\Re(m_i)\ge0$ for all $i\in S$ or $\Re(m_i)\le0$ for all $i\in S$, and hence either $\Re(\bm{m})$ or $-\Re(\bm{m})$ is a measure.
\end{proof}

Then the following result provides the form of the stationary distribution.

\begin{thm}\label{thm:pi}
The stationary distribution $\bm{\pi}=(\pi_0\;\;\cdots\;\;\pi_N)^\T$ of the reflecting random leap is given by
\basn\SADS\label{pi-reflect}
\pi_i=\frac{\det(\bm{W}_i)}{\sum_{j=0}^N\det(\bm{W}_j)}
\easn
for each $i\in\{0,\ldots,N\}$.
\end{thm}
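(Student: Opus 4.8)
The plan is to follow the template of the proofs of Theorems~\ref{thm:u} and~\ref{thm:v}: convert the stationarity equations into a linear recurrence, solve it in closed form using the roots $y_1,\dots,y_s$ of the reverse characteristic polynomial $\psi$, pin down the free coefficients with the boundary equations via Cramer's rule and bordered determinants, and finally invoke Lemma~\ref{lem:stationary} to descend from a possibly complex-valued solution of $\bm m\bm P=\bm m$ to the genuine (real) stationary distribution.

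First I would rewrite the balance equations $\pi_j=\sum_i\pi_iP_{i,j}$ in the more convenient ``cut'' form: in stationarity the probability flux from $\{0,\dots,j\}$ into $\{j+1,\dots,N\}$ equals the flux in the opposite direction. A direct computation shows that a reflection only relocates \emph{where} in $\{0,\dots,j\}$ (respectively $\{j+1,\dots,N\}$) a jump lands, and hence does not change the total cross-cut flux; consequently, for every $j\in\{0,\dots,N-1\}$ the cut equation reads
\[
\sum_{m=0}^{k_p-1}\bigl(\textstyle\sum_{\ell=m+1}^{k_p}p_\ell\bigr)\pi_{j-m}=\sum_{m=1}^{k_q}\bigl(\textstyle\sum_{\ell=m}^{k_q}q_\ell\bigr)\pi_{j+m},
\]
with the convention $\pi_i=0$ for $i\notin\{0,\dots,N\}$. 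This is a homogeneous linear recurrence of order $r=k_p+k_q-1$ whose leading and trailing coefficients are $q_{k_q}\ne0$ and $p_{k_p}\ne0$; substituting $\pi_i=z^i$ and clearing a power of $z$ shows that its characteristic polynomial coincides, up to a nonzero constant and a factor $z^{k-k_p}$, with $\psi(z)$, so its roots are exactly $y_1,\dots,y_s$ with multiplicities $r_1,\dots,r_s$. Hence over the relevant window of indices the general solution is $\pi_i=\sum_{j=1}^s\sum_{\ell=1}^{r_j}b_{j,\ell}\,i^{\ell-1}y_j^i$, i.e.\ $\bm\pi=\bm Y\bm b$, once $\bm\pi$ is read as the length-$(N+r-1)$ vector built from $\pi_0,\dots,\pi_N$ together with the ``ghost'' values obtained by extrapolating the formula just past the barriers.

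Next I would impose the constraints not already absorbed into the representation $\bm\pi=\bm Y\bm b$: the zero-padding conditions on the ghost entries and the cut equations at indices so near the barriers that they reach outside the window, together with the normalization $\sum_i\pi_i=1$. Rewriting each of these in terms of $\bm b$ via $\bm\pi=\bm Y\bm b$, and using that the modified accordion matrices $\bm A_i^\dagger$ and the row $\bm\eta^\T$ (together with the $k_p$th-row operation that produces $\bm A^\ddagger$) are precisely what read off the individual components, the partial sums, and the near-barrier combinations of $\bm Y\bm b$ appearing in those equations, the system assembles into the matrix $\bm W_i$; the column $\bm e_{k_p-1}-\bm e_{k_p}$ and the corner entry $\sum_\ell p_\ell-\sum_\ell\ell q_\ell$ are exactly the contributions of the barrier balance equations. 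Solving for $\bm b$ by Cramer's rule written as bordered $(r+1)\times(r+1)$ determinants, exactly as in equation~(\ref{clever}) and the final display of the proof of Theorem~\ref{thm:v}, and collapsing the partial sums along the $i$-dependent row of $\bm A_i^\dagger$, yields $\pi_i$ proportional to $\det(\bm W_i)$ with the same proportionality constant for every $i$; summing over $i$ and using $\sum_i\pi_i=1$ identifies that constant and gives~(\ref{pi-reflect}). Along the way one must also check, as in the nonsingularity argument for $\bm A_N\bm Z$ in Theorem~\ref{thm:u}, that the determinant in the denominator of Cramer's rule is nonzero and that $\bm m=(\det\bm W_0,\dots,\det\bm W_N)$ is not the zero vector.

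Finally, since the $y_j$ may be complex, $\bm m$ is a priori only a complex solution of $\bm m\bm P=\bm m$; but the state space is finite and, by Assumption~\ref{assum:gcd}, the chain is irreducible (hence recurrent), so $\sum_i m_i$ is trivially well-defined and finite, and Lemma~\ref{lem:stationary} applies: $\sum_i m_i\ne0$ and $\bigl(\sum_i m_i\bigr)^{-1}\bm m$ is the unique stationary distribution, which is~(\ref{pi-reflect}). The main obstacle is the boundary bookkeeping: determining exactly which cut/balance equations remain after the interior recurrence, verifying that they assemble without error into the $(r+1)\times(r+1)$ matrix $\bm W_i$ (in particular the placement of $\bm\eta$ and the passage from $\bm A_i^\dagger$ to $\bm A^\ddagger$), and showing that the common normalizing constant emerging from Cramer's rule is exactly $\sum_{j=0}^N\det(\bm W_j)$ — this is the feature that makes the denominator in~(\ref{pi-reflect}) a sum of determinants, unlike the single-determinant denominators in Theorems~\ref{thm:u} and~\ref{thm:v}.
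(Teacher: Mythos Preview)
Your high-level template---recurrence, boundary conditions, Cramer/bordered determinants, then Lemma~\ref{lem:stationary}---is exactly the paper's. The substantive divergence is that you set up the order-$r$ recurrence for $\pi_i$ directly via the flux/cut equations and write $\bm\pi=\bm Y\bm b$, whereas the paper writes the balance equations and passes to the \emph{differences} $d_i=\pi_i-\pi_{i-1}$, obtaining $\bm d=\bm Y\bm b$ (equations~(\ref{pi-d-recurrence})--(\ref{pi-d-general})). Your cut relation and the paper's difference relation are in fact the same linear recurrence, so the characteristic-polynomial step is fine.

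The gap is in the sentence ``the system assembles into the matrix $\bm W_i$.'' The matrix $\bm W_i$ is built from $\bm A_i^\dagger\bm Y$, and $\bm A_i^\dagger$ is tailored to the $d$-representation: applied to $\bm d$, its identity blocks pick out the extrapolated values $d_{2-k_p},\ldots,d_0$ and $d_{N+2},\ldots,d_{N+k_q-1}$ (which the paper shows in~(\ref{pi-bd-upper})--(\ref{pi-bd-lower}) satisfy $d_i=0$, $d_0=\pi_0$, $d_{N+1}=-\pi_N$), while its partial-sum rows $\bm1_i^\T$ and $\bm1_{N-i+1}^\T$ telescope differences back into $\pi_i-\pi_0$ and related quantities. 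If instead you feed $\bm\pi=\bm Y\bm b$ into $\bm A_i^\dagger$, those same rows return ghost values of $\pi$ (not zero in general) and partial sums $\sum_{m\le i}\pi_m$, $\sum_{m>i}\pi_m$---none of which are the quantities appearing in your boundary cut equations (for instance the $j=0$ cut equation is $\pi_0\sum_\ell p_\ell=\sum_m(\sum_{\ell\ge m}q_\ell)\pi_m$, which is not a row of $\bm A_0^\dagger\bm\pi$). So your constraints do not assemble into $\bm W_i$ as claimed. To reach~(\ref{pi-reflect}) you must either switch to differences---your cut relation for $\pi$ then \emph{is} the paper's relation for $d$, and you reproduce the boundary work~(\ref{pi-bd-upper})--(\ref{pi-bd-zero}) together with the key manipulation~(\ref{pi-cramer})--(\ref{pi-manipulate})---or stay with $\pi$ and derive a different determinant that you then have to prove equals $\det(\bm W_i)$. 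Either way, what you flag as ``the main obstacle'' is not just bookkeeping: it is precisely where the specific structure of $\bm W_i$ (the placement of $\bm\eta$, the extra column $\bm e_{k_p-1}-\bm e_{k_p}$, the corner entry) originates, and it does not fall out of the $\bm\pi=\bm Y\bm b$ representation you have set up.
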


\begin{proof}
Note that the reflecting random leap is recurrent (due to its finite space) and irreducible (by Assumption~\ref{assum:gcd}) so Lemma~\ref{lem:stationary} holds.  Then it suffices to show that the vector $(\,\det(\bm{W}_0)\;\;\cdots\;\;\det(\bm{W}_N)\,)^\T$ is stationary.
The proof of this result consists of three main
parts. First,
we establish that any stationary vector satisfies a recurrence relation of order~$ r$ similar to the one used in the proof of Theorems~\ref{thm:u}~and~\ref{thm:v}.  Second, we find $ r$~boundary conditions that uniquely determine the values of the recurrence relation.  Third, we perform some matrix manipulations to express the solution in the desired form
in~(\ref{pi-reflect}).

We begin by establishing the aforementioned recurrence relation.
Define $d_i=\pi_i-\pi_{i-1}$ for each~$i\in\{1,\ldots,N\}$.
Then for each $i\in\{k_p,\ldots,N-k_q\}$,
\begin{align}
\pi_i=\sum_{j=0}^N \pi_j\,P\left(X_{n+1}=i\Given X_n=j\right)=\sum_{j=1}^{k_p} p_j\,\pi_{i-j}+\sum_{j=1}^{k_q} q_j\,\pi_{i+j}.
\label{pi-stat-middle}
\end{align}
Then we have
\begin{align*}\SADS
0&=\sum_{j=1}^{k_p} p_j(\pi_{i-j}-\pi_i)+\sum_{j=1}^{k_q} q_j(\pi_{i+j}-\pi_i)\notag\\
&=\sum_{j=1}^{k_q}\left(\sum_{\ell=j}^{k_q} q_\ell\right)d_{i+j}-\sum_{j=1}^{k_p}\left(\sum_{\ell=j}^{k_p} p_\ell\right)d_{i-j+1},
\end{align*}
which implies that
\begin{align}\SADS
d_i&=\smashoperator{\sum_{j=1}^{k_q-1}}\left(-\frac{1}{q_{k_q}}\sum_{\ell=j}^{k_q} q_\ell\right)d_{i-k_q+j}+\sum_{j=1}^{k_p}\left(\frac{1}{q_{k_q}}\sum_{\ell=j}^{
k_p
} p_\ell\right)d_{i-k_q-j+1}\notag\\
&=-\frac{1}{q_{k_q}}\sum_{j=1}^{ r}\gamma_{k+k_q-j-1}d_{i-j}
\label{pi-d-recurrence}
\end{align}
for each $i\in\{ r+1,\ldots,N\}$.  Thus, $d_{ r+1},\ldots,d_N$ are governed by 
a linear recurrence relation
of order $ r$ with characteristic polynomial
\begin{align*}
\psi^\star(z)=\frac{1}{q_{k_q}}\sum_{j=1}^{ r+1}\gamma_{k+k_q-j}z^{ r+1-j}
=\frac{z^{k_p-k}}{q_{k_q}}\sum_{j=k\mathrlap{-k_q+1}\,}^{k+\mathrlap{k_p}}\gamma_{2k-j}z^{2k-j}
=\frac{z^{k_p-k}}{q_{k_q}}\;\psi(z)
\end{align*}
since $c_{2k-j}=0$ if $j<k-k_q+1$ or $j>k+k_p$.  Thus, the $ r$ roots of $\psi^\star(z)$ coincide with the $ r$ nonzero roots of $\psi(z)$, which are $z_1^{-1},\ldots,z_s^{-1}$, or equivalently $y_1,\ldots,y_s$.

The general form of the solution to such a recurrence relation is
\begin{align}
d_i=\sum_{j=1}^s\sum_{\ell=1}^{ r_j}b^\star_{j,\ell}\,i^{\ell-1}
y_j^{i}
,
\label{pi-d-general}
\end{align}
with the convention that $0^0=1$,
and where the $b^\star_{j,\ell}$ are coefficients that must be found through boundary conditions, of which $ r$ are needed.  By treating $d_1,\ldots,d_ r$ as being determined by these boundary conditions, the above solution may be taken to hold for all $i\in\{1,\ldots,N\}$.

We now proceed to the second part of the proof, in which we derive these boundary conditions.
Suppose we permit the recurrence relation to be extended in each direction to also define for convenience the quantities $d_{2-k_p},\ldots,d_0$ and $d_{N+1},\ldots,d_{N+k_q-1}$.
Then for each $i\in\{N-k_q+1,\ldots,N-1\}$,
\begin{align}
\pi_i=\sum_{j=0}^N \pi_j\,P\left(X_{n+1}=i\Given X_n=j\right)=\sum_{j=1}^{k_p} p_j\,\pi_{i-j}+\smashoperator{\sum_{j=1}^{N-i}} q_j\,\pi_{i+j}.
\label{pi-stat-upper}
\end{align}
Thus, we have
\begin{align*}
0&=\sum_{j=1}^{k_p} p_j(\pi_{i-j}-\pi_i)+\smashoperator{\sum_{j=1}^{N-i}} q_j(\pi_{i+j}-\pi_i)-\smashoperator{\sum_{j=N-i+1}^{k_q}}q_j\pi_i\\
&=\sum_{j=1}^{N-i}\left(\sum_{\ell=j}^{N-i} q_\ell\right)d_{i+j}-\sum_{j=1}^{k_p}\left(\sum_{\ell=j}^{k_p} p_\ell\right)d_{i-j+1}-\left(\pi_0+\sum_{j=1}^i d_j\right)\sum_{\ell=N\mathrlap{-i+1}}^{k_q} q_\ell\\
&=\sum_{j=1}^{k_q}\left(\sum_{\ell=j}^{k_q} q_\ell\right)d_{i+j}-\sum_{j=1}^{k_p}\left(\sum_{\ell=j}^{k_p} p_\ell\right)d_{i-j+1}
-\sum_{\mathllap{j=N}-i+\mathrlap{1}}^{k_q}\left(\sum_{\ell=j}^{k_q} q_\ell\right)d_{i+j}-\left(\pi_0+\sum_{j=1}^{N} d_j\right)\sum_{\ell=N\mathrlap{-i+1}}^{k_q} q_\ell\\
&=-\sum_{\mathllap{j=N}-i+\mathrlap{1}}^{k_q}\left(\sum_{\ell=j}^{k_q} q_\ell\right)d_{i+j}-\left(\pi_0+\sum_{j=1}^{N} d_j\right)\sum_{\ell=N\mathrlap{-i+1}}^{k_q} q_\ell
\end{align*}
for each $i\in\{N-k_q+1,\ldots,N-1\}$,
where the last equality holds because $d_{i+k_q}$ satisfies the recurrence relation for each $i\in\{N-k_q+1,\ldots,N-1\}$.  This implies that
\[
\sum_{\mathllap{j=N}-i+\mathrlap{1}}^{k_q}\left(\sum_{\ell=j}^{k_q} q_\ell\right)d_{i+j}=-\left(\pi_0+\sum_{j=1}^{N} d_j\right)\sum_{\ell=N\mathrlap{-i+1}}^{k_q} q_\ell
\]
for each $i\in\{N-k_q+1,\ldots,N-1\}$, which may be represented in matrix form as
\[
\begin{pmatrix}
\sum_{j=k_q}^{k_q}q_j&\cdots&0\\
\vdots&\ddots&\vdots\\
\sum_{j=2}^{k_q}q_j&\cdots&\sum_{j=k_q}^{k_q}q_j
\end{pmatrix}
\begin{pmatrix}d_{N+1}\\ \vdots\\d_{N+k_q-1}\end{pmatrix}
=-\left(\pi_0+\sum_{j=1}^{N} d_j\right)
\begin{pmatrix}\sum_{j=k_q}^{k_q}q_j\\ \vdots\\ \sum_{j=2}^{k_q}q_j\end{pmatrix}.
\]
Gaussian elimination reduces this to
\begin{align}\label{pi-bd-upper}
d_{N+1}=-\left(\pi_0+\sum_{j=1}^{N} d_j\right), \qquad d_i=0 \text{ for each }i\in\{N+2,\ldots,N+k_q-1\}.
\end{align}
Similarly, for each $i\in\{1,\ldots,k_p-1\}$,
\begin{align}
\pi_i=\sum_{j=0}^N \pi_j\,P\left(X_{n+1}=i\Given X_n=j\right)=\sum_{j=1}^{i} p_j\,\pi_{i-j}+\sum_{j=1}^{k_q} q_j\,\pi_{i+j}.
\label{pi-stat-lower}
\end{align}
Thus, we have
\begin{align*}
0&=\sum_{j=1}^{i} p_j(\pi_{i-j}-\pi_i)+\sum_{j=1}^{k_q} q_j(\pi_{i+j}-\pi_i)-\sum_{j=i+1}^{k_p}p_j\pi_i\\
&=\sum_{j=1}^{k_q}\left(\sum_{\ell=j}^{k_q} q_\ell\right)d_{i+j}-\sum_{j=1}^{i}\left(\sum_{\ell=j}^{i} p_\ell\right)d_{i-j+1}-\left(\pi_0+\sum_{j=1}^i d_j\right)\sum_{\ell=i\mathrlap{+1}}^{k_p} p_\ell\\
&=\sum_{j=1}^{k_q}\left(\sum_{\ell=j}^{k_q} q_\ell\right)d_{i+j}-\sum_{j=1}^{k_p}\left(\sum_{\ell=j}^{k_p} p_\ell\right)d_{i-j+1}
+\sum_{j=i+1}^{k_p}\left(\sum_{\ell=j}^{k_p} p_\ell\right)d_{i-j+1}
-\pi_0\sum_{\ell=i\mathrlap{+1}}^{k_p} p_\ell\\
&=\sum_{\mathllap{j}=i+\mathrlap{1}}^{k_p}\left(\sum_{\ell=j}^{k_p} p_\ell\right)d_{i-j+1}
-\pi_0\sum_{\ell=i\mathrlap{+1}}^{k_p} p_\ell
\end{align*}
for each $i\in\{1,\ldots,k_p-1\}$,
where the last equality holds because $d_{i+k_q}$ satisfies the recurrence relation for each $i\in\{1,\ldots,k_p-1\}$.  This implies that
\[
\sum_{\mathllap{j}=i+\mathrlap{1}}^{k_p}\left(\sum_{\ell=j}^{k_p} p_\ell\right)d_{i-j+1}
=\pi_0\sum_{\ell=i\mathrlap{+1}}^{k_p} p_\ell
\]
for each $i\in\{1,\ldots,k_p-1\}$, which may be represented in matrix form as
\[
\begin{pmatrix}
\sum_{j=k_p}^{k_p}p_j&\cdots&\sum_{j=2}^{k_p}p_{j}\\
\vdots&\ddots&\vdots\\
0&\cdots&\sum_{j=k_p}^{k_p}p_j
\end{pmatrix}
\begin{pmatrix}d_{2-k_p}\\ \vdots\\d_0\end{pmatrix}
=\pi_0\begin{pmatrix}\sum_{j=2}^{k_p}p_j\\ \vdots\\ \sum_{j=k_p}^{k_p}p_j\end{pmatrix}.
\]
Gaussian elimination reduces this to
\begin{align}\label{pi-bd-lower}
d_i=0\text{ for each }i\in\{2-k_p,\ldots,-1\},\qquad d_0=\pi_0.
\end{align}
Now observe that
\begin{align}\SADS
\pi_0=\sum_{j=0}^N \pi_j\,P\left(X_{n+1}=i\Given X_n=j\right)=\pi_{0}\sum_{\ell=1}^{k_q}q_j+\sum_{j=1}^{k_q}\left(\sum_{\ell=j}^{k_q}q_\ell\right)\pi_{j},
\label{pi-stat-zero}
\end{align}
which implies that
\begin{align}\SADS
\sum_{j=1}^{k_q}\left[\sum_{\ell=j}^{k_q}(\ell-j+1)q_\ell\right]d_j=\pi_{0}\left(\sum_{\ell=1}^{k_p}p_\ell-\sum_{\ell=1}^{k_q}\ell\,q_\ell\right).
\label{pi-bd-zero}
\end{align}
(Note that the equation $\bm{\pi}=\bm{\pi}\bm{P}$ actually implies only $N$ linearly independent constraints, not $N+1$, so equating $\pi_N$ with the corresponding component of $\bm{\pi}\bm{P}$ would be superfluous.)
Then (\ref{pi-bd-upper}), (\ref{pi-bd-lower}), and (\ref{pi-bd-zero}) together provide the $ r$~boundary conditions needed to solve the recurrence relation.

Finally, we move to the last part of the proof, in which we manipulate the solution into the desired form.
Now define the length-$(N+ r-1)$ vector
$\bm{d}=(d_{2-k_p},\ldots,d_{N+k_q-1})$ and the length-$ r$ vector
$\bm{b}=(b_1,\ldots,b_ r)=(b^\star_{1,1},\ldots,b^\star_{s,r_s})$,
so that we may then write the general solution~(\ref{pi-d-general}) as simply $\bm{d}=\bm{Y}\bm{b}$.
Also, let $\bm{\phi}$ be the length-$ r$ vector with elements
\[
\phi_i=\begin{cases}
k_p-i & \text{ if } |i-k_p|=1,\\
\sum_{\ell=1}^{k_p}p_\ell-\sum_{\ell=1}^{k_q}\ell\,q_\ell & \text{ if }i=k_p,\\
0 & \text{ otherwise}.
\end{cases}
\]
Then (\ref{pi-bd-upper}), (\ref{pi-bd-lower}), and (\ref{pi-bd-zero}) may be combined as $\smash{\bm{A}^\ddagger\bm{d}=\pi_0\,\bm{\phi}}$,
hence
$\smash{\bm{A}^\ddagger\bm{Y}\bm{b}=\pi_0\,\bm{\phi}}$.
Now observe that the matrix $\bm{A}^\ddagger\bm{Y}$ cannot be singular.
(If it were, then there would be either no
solutions
or multiple solutions for $\pi_0^{-1}\bm{b}$, and thus
there would be
either no
solutions
or multiple solutions for $\{\pi_i/\pi_0:i\ge1\}$.
This would contradict
either the existence or uniqueness, respectively, of a stationary distribution.  Note also that $\pi_0>0$ since the Markov chain is irreducible.)
Then we may write
$\bm{b}=\pi_0(\bm{A}^\ddagger\bm{Y})^{-1}\bm{\phi}$ and apply Cramer's rule to write the elements $b_\ell$ of $\bm{b}$ as
\begin{align}\SADS
b_\ell=\frac{\pi_0}{\det(\bm{A}^\ddagger\bm{Y})}\begin{vmatrix}\bm{A}^\ddagger\bm{Y}&-\bm{\phi}\\ \bm{e}_\ell^\T&0\end{vmatrix},
\label{pi-cramer}
\end{align}
where $|\cdot|$ denotes the determinant and $\bm{e}_\ell$ is the $\ell$th unit vector of length $ r$.  Now let $Y_{j,\ell}$ denote the $(j,\ell)$th element of $\bm{Y}$, so that for each $i\in\{1,\ldots,N\}$,
\begin{align}
\pi_i
=\pi_0+\sum_{j=1}^i\sum_{\ell=1}^ r Y_{j+k_p-1,\,\ell}\,b_\ell&=
\pi_0+\frac{\pi_0}{\det(\bm{A}^\ddagger\bm{Y})}\begin{vmatrix}\bm{A}^\ddagger\bm{Y}&-\bm{\phi}\\ \sum_{j=k_p}^{k_p+i-1}\tilde{\bm{e}}_j^\T\bm{Y}&0\end{vmatrix}
\label{pi-manipulate}
\\
&=\frac{\pi_0}{\det(\bm{A}^\ddagger\bm{Y})}\begin{vmatrix}\bm{A}^\ddagger\bm{Y}&-\bm{\phi}\\ \sum_{j=k_p}^{k_p+i-1}\tilde{\bm{e}}_j^\T\bm{Y}&1\end{vmatrix}
=\pi_0\;\frac{\det(\bm{W}_i)}{\det(\bm{A}^\ddagger\bm{Y})},\notag
\end{align}
where $\tilde{\bm{e}}_j$ is the $j$th unit vector of length $N+r-1$.
Note that the last equality is obtained by switching the $k_p$th and last rows, negating the last column, and subtracting the $k_p$th row from the $(k_p+1)$st row unless the latter is also the last row.
Then setting $\sum_{j=0}^N\pi_j=1$ yields $\smash{\pi_0=\det(
\bm{A}^\ddagger
\bm{Y})/\sum_{j=0}^N\det(\bm{W}_j)}$, from which the result follows immediately.
\end{proof}

\subsection{Infinite State Space}
\label{subsec:reflect-infinite}

Another Markov chain of interest
is
the variant of the reflecting random leap that features a reflecting lower barrier at $0$ but lacks an upper barrier, which we formally define below.

\begin{defn}\label{defn:reflect-infinite}
Let $\{X_n^\star:n\ge0\}$ be a simple random leap.  Now define a Markov chain $\{X_n:n\ge0\}$ with state space $\{0,1,\ldots\}$ and transition probabilities
\[
P\left(X_{n+1}=j\Given X_n=i\right)=\begin{cases}
P\left(X_{n+1}^\star\le 0\Given X_n^\star=i\right) & \text{ if }j=0,\\
P\left(X_{n+1}^\star\mathrlap{{}=j}\hphantom{{}\le0}\Given X_n^\star=i\right) & \text{ if }j\ge1.
\end{cases}
\]
We call the resulting Markov chain a \emph{one-sided reflecting random leap}.
\end{defn}

Such a Markov chain may be informally conceptualized in some sense as the limit of a two-sided reflecting random leap on $\{0,\ldots,N\}$ as $N\to\infty$. It might therefore be hoped that the limits (if they exist) of the probabilities comprising the stationary distribution for the two-sided version might equal the corresponding probabilities for the one-sided version.  This is indeed the case as long as the convergence is uniform,
an idea which is further discussed in the Appendix.  However, we now derive the the stationary distribution for the one-sided form using different, though related, methods.
First,
we must prove a technical lemma about the roots of the polynomial $\psi(z)$.

\begin{lem}\label{lem:roots-loc}
Counting with multiplicity, the polynomial $\psi(z)$ has exactly $k_p-I(\mu\ge0)$ nonzero roots with absolute value strictly less than~$1$, exactly $k_q-I(\mu\le0)$ roots with absolute value strictly greater than~$1$, and exactly $I(\mu=0)$ roots with absolute value equal to~$1$.
\end{lem}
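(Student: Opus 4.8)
The plan is to count roots of $\psi(z)$ inside, on, and outside the unit circle by relating $\psi$ to the characteristic polynomial $\chi$ and exploiting the probabilistic meaning of the coefficients. Recall from the excerpt that $\psi(z)=z^{2k-1}\chi(z^{-1})$, so the nonzero roots of $\psi$ are the reciprocals of the nonzero roots of $\chi$; hence it suffices to prove the mirror-image statement for $\chi(z)=\sum_{j=k-k_q}^{k+k_p-1}c_jz^j$, namely that $\chi$ has exactly $k_q-I(\mu\ge 0)$ nonzero roots with $|z|>1$, exactly $k_p-I(\mu\le 0)$ with $|z|<1$, and $I(\mu=0)$ on $|z|=1$. (One must also double-check the total count: $\chi$ has $k_p+k_q-1 = r$ nonzero roots, and indeed $(k_q-I(\mu\ge0))+(k_p-I(\mu\le0))+I(\mu=0) = k_p+k_q-1$ in all three sign cases for $\mu$.)

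The first key step is to locate the root at $z=1$. A direct computation gives $\chi(1)=\sum_{j}c_j = \sum_{\ell=1}^{k_p}\ell p_\ell - \sum_{\ell=1}^{k_q}\ell q_\ell$; wait—more carefully, using the definition of $c_j$ one gets $\chi(1) = \sum_{j=k}^{2k-1}\sum_{\ell=j-k+1}^k p_\ell - \sum_{j=0}^{k-1}\sum_{\ell=k-j}^k q_\ell = \sum_{\ell=1}^{k}\ell p_\ell - \sum_{\ell=1}^k \ell q_\ell = \mu$. So $z=1$ is a root of $\chi$ iff $\mu=0$, which accounts for the $I(\mu=0)$ term; one then checks (e.g.\ by differentiating $\chi$ at $1$ and getting something like $\tfrac12(\sigma^2+\mu^2+\mu)\ne0$ when $\mu=0$) that this root is simple, so there is no root at $z=1$ with higher multiplicity and—by a separate argument—no other root on $|z|=1$. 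The latter I would handle by a strict-triangle-inequality / Rouché-type argument: write $\chi(z)=0$ on $|z|=1$ as $p_{k_p}z^{k_p-1+k}= -\big(\text{sum of other terms}\big)$ after dividing by $z^{k-k_q}$, and use that the coefficients $c_j$ have the special telescoping sign structure ($c_j<0$ for $j<k$, $c_j>0$ for $j\ge k$, with $|c_j|$ being partial sums of the $p$'s and $q$'s) together with Assumption~\ref{assum:gcd} (gcd $=1$) to rule out equality in the triangle inequality except at $z=1$ — essentially an aperiodicity argument showing $\chi$ cannot vanish at a root of unity other than $1$, and more generally not anywhere on the circle.

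The second, and I expect main, step is the count of roots strictly inside versus strictly outside the unit circle. The clean tool here is the argument principle / Rouché applied to a comparison polynomial, or equivalently a continuity-in-a-parameter (homotopy) argument. I would consider the family $\chi_t(z)$ obtained by deforming the transition probabilities toward a degenerate case whose root locations are transparent — for instance pushing mass to $p_{k_p}$ and $q_{k_q}$ so that $\chi_t$ approaches (a scalar multiple of) $p_{k_p}z^{k+k_p-1} - q_{k_q}z^{k-k_q}$, i.e.\ $z^{k-k_q}(p_{k_p}z^{k_p+k_q-1}-q_{k_q})$, whose nonzero roots are the $(k_p+k_q-1)$-st roots of $q_{k_q}/p_{k_p}$, all of modulus $(q_{k_q}/p_{k_p})^{1/(k_p+k_q-1)}$. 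This modulus is $<1$, $=1$, or $>1$ according to the sign of $\log(q_{k_q}/p_{k_p})$, which can be matched to the sign of $\mu$ in the limiting regime. Then, provided no root crosses $|z|=1$ along the homotopy (which is exactly what Step one's "no roots on the circle except the simple one at $z=1$ when $\mu=0$" guarantees, after checking the homotopy can be taken to keep $\mathrm{sign}(\mu)$ fixed), the inside/outside counts are constant along the path and can be read off from the degenerate endpoint. The bookkeeping of the $I(\mu\ge0)$ versus $I(\mu\le0)$ asymmetry — i.e.\ which side the boundary root at $z=1$ "belongs to" as $\mu\to0^{\pm}$ — is the delicate part and is where I would be most careful: when $\mu>0$ one of the $k_q$ large-modulus roots of the degenerate polynomial has migrated in to modulus $<1$, matching $k_q-1 = k_q-I(\mu\ge0)$ roots outside and one extra inside, and symmetrically for $\mu<0$; at $\mu=0$ that root sits exactly at $z=1$.

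Finally, translating back via $z\mapsto z^{-1}$ swaps "inside" and "outside" and swaps the roles of $p$ and $q$ (note $\psi$ is $\chi$ with $p\leftrightarrow q$), turning the count for $\chi$ into exactly the claimed count for $\psi$: $k_p-I(\mu\ge0)$ roots of $\psi$ with $|z|<1$, $k_q-I(\mu\le0)$ with $|z|>1$, and $I(\mu=0)$ with $|z|=1$. The main obstacle, to restate, is rigorously ruling out roots on the unit circle other than the exceptional one (the aperiodicity/strict-inequality argument using the coefficient sign pattern and Assumption~\ref{assum:gcd}) and correctly tracking the boundary root's side as $\mu\to 0^\pm$; the Rouché/homotopy machinery itself is then routine.
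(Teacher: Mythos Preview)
Your overall architecture---reduce to $\chi$, compute $\chi(1)=\mu$, use Assumption~\ref{assum:gcd} (aperiodicity) to exclude other unit-circle roots, then count inside/outside by a Rouch\'e or continuity argument---is exactly the paper's structure. Two problems, however, make your proposed execution of the count fail.

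First, a minor bookkeeping slip: the mirror-image statement you wrote for $\chi$ has $k_p$ and $k_q$ swapped. Taking reciprocals of roots of $\psi$ gives that $\chi$ has $k_p-I(\mu\ge0)$ nonzero roots with $|z|>1$ and $k_q-I(\mu\le0)$ with $|z|<1$ (this is what the paper states and proves for $\chi^\star$). Your intermediate claim and your final sentence are inconsistent with each other; only the final sentence is correct.

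Second, and more seriously, your homotopy endpoint is miscalculated. The coefficients $c_j$ are \emph{cumulative} sums of the $p_\ell$ and $q_\ell$, so pushing all mass to $p_{k_p}$ and $q_{k_q}$ does \emph{not} give $p_{k_p}z^{k+k_p-1}-q_{k_q}z^{k-k_q}$; it gives
\[
\chi(z)=p_{k_p}\sum_{j=k}^{k+k_p-1}z^{j}\;-\;q_{k_q}\sum_{j=k-k_q}^{k-1}z^{j},
\]
whose nonzero roots do not all share a common modulus, so you cannot simply read off the inside/outside count. Moreover, when $\gcd(k_p,k_q)>1$ the endpoint chain violates Assumption~\ref{assum:gcd}, so your ``no other roots on $|z|=1$'' step breaks down near the endpoint and roots can cross the circle along the homotopy; the argument then gives no information.

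The paper sidesteps both issues. For $\mu\ne0$ it applies Rouch\'e directly to $\tilde\chi(z)=(z-1)\chi^\star(z)=z^{k_q}\bigl(-1+\sum_j p_jz^j+\sum_j q_jz^{-j}\bigr)$ on the circle of radius $1\pm\varepsilon$, comparing the dominant term $|\tilde c_{k_q}|(1\pm\varepsilon)^{k_q}=(1\pm\varepsilon)^{k_q}$ against the sum of the remaining terms (a first-order Taylor estimate in $\varepsilon$ produces the factor $\mu$, whose sign decides which radius works). For $\mu=0$ it uses a \emph{small} perturbation $(p_{k_p},q_{k_q})\mapsto(p_{k_p}+\varepsilon,q_{k_q}-\varepsilon)$---never leaving the regime where Assumption~\ref{assum:gcd} holds---together with continuity of the roots and the already-established $\mu\ne0$ counts on either side. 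This avoids ever needing a degenerate endpoint with transparent roots.
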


\begin{proof}
It suffices to show that the polynomial $\chi^\star(z)$ as defined in~(\ref{chi-star}) has exactly $k_p-I(\mu\ge0)$ roots with absolute value strictly greater than~$1$, exactly $k_q-I(\mu\le0)$ roots with absolute value strictly less than~$1$, and exactly $I(\mu=0)$ roots with absolute value equal to~$1$.

We first establish the last part of the statement.
By Descartes' rule of signs, $\chi^\star(z)$ has exactly one positive real root, and since $\chi^\star(1)=\mu$, it follows that $z=1$ is a (single) root of $\chi^\star(z)$ if $\mu=0$, and $z=1$ is not a root of $\chi^\star(z)$ if $\mu\ne0$.
Now define for $z\ne0$ the polynomial
\[
\tilde{\chi}(z)=(z-1)\,\chi^\star(z)=z^{k_q}\left(-1+\sum_{j=1}^{k_p}p_j z^{j}+\sum_{j=1}^{k_q}q_j z^{-j}\right)=\sum_{j=0}^{ r+1}\tilde{c}_{ r-j+1}z^{ r-j+1},
\]
with
\[
\tilde{c}_{ r-j+1}=\begin{cases}p_{k_p-j}&\text{ if }0\le j\le k_p-1,\\-1&\text{ if }j=k_p,\\q_{j-k_p}&\text{ if }k_p+1\le j\le  r+1.\end{cases}
\]
Note that the roots of $\tilde{\chi}(z)$ are the roots of $\chi^\star(z)$ plus an additional root at $z=1$.  For any $z\ne1$ with $|z|=1$, it is clear that $\tilde{\chi}(z)=0$ if and only if $z^j=1$ for every~$j$ such that $p_j+q_j>0$, noting that $\smash{\sum_{j=1}^{k_p}p_j+\sum_{j=1}^{k_q}q_j=1}$.
This condition is satisfied only if $z$ is a $g$th root of unity, where $g=\gcd\{j:p_j+q_j>0\}$.  But $g=1$ by Assumption~\ref{assum:gcd}, and hence the condition cannot be satisfied by any $z\ne1$.  Therefore $\chi^\star(z)$ has exactly $I(\mu=0)$ roots with absolute value equal to~$1$.

We now establish the first two parts of the statement in the case that $\mu\ne0$.  Observe that since $(1+\varepsilon)^j=1+j\varepsilon+O(\varepsilon^2)$ for any~$j\in
\mathbb{Z}
$, there exists $\varepsilon^\star>0$ such that for every~$\varepsilon$ with $|\varepsilon|<\varepsilon^\star$,
\bas\SADS
\max_{\mathllap{-}k_q\le j\le k_p}\left[(1+\varepsilon)^j-(1+j\varepsilon)\right]<\frac{|\varepsilon\mu|}{2}.
\eas
Then let $\varepsilon\in(0,\varepsilon^\star)$, and consider the $\mu<0$ and $\mu>0$ cases separately.

Suppose $\mu<0$.  Observe that
\begin{align*}
&\smashoperator{\sum_{j=0}^{k_p-1}}\,\left|\tilde{c}_{ r+1-j}\right|(1+\varepsilon)^{ r+1-j}
+\smashoperator{\sum_{j=k_p+1}^{ r+1}}\,\left|\tilde{c}_{ r+1-j}\right|(1+\varepsilon)^{ r+1-j}\\
&\qquad=(1+\varepsilon)^{k_q}\left(\sum_{j=1}^{k_p}p_{j} (1+\varepsilon)^{j}+\sum_{j=1}^{k_q}q_j (1+\varepsilon)^{-j}\right)\\
&\qquad<(1+\varepsilon)^{k_q}\left[\sum_{j=1}^{k_p}p_{j} \left(1+j\varepsilon-\frac{\varepsilon\mu}{2}\right)+\sum_{j=1}^{k_q}q_j \left(1-j\varepsilon-\frac{\varepsilon\mu}{2}\right)\right]\\
&\qquad=(1+\varepsilon)^{k_q}\left(1+\varepsilon\mu-\frac{\varepsilon\mu}{2}\right)
=(1+\varepsilon)^{k_q}\left(1+\frac{\varepsilon\mu}{2}\right)
<(1+\varepsilon)^{k_q}=\left|\tilde{c}_{k_q}\right|(1+\varepsilon)^{k_q}.
\end{align*}
Then by Rouch\'{e}'s theorem, $\tilde{\chi}(z)$ has exactly $k_q$ zeros with absolute value strictly less than $1+\varepsilon$.  Since $\varepsilon$ was arbitrarily small, it follows that $\tilde{\chi}(z)$ has exactly $k_q$ zeros with absolute value less than or equal to~$1$, which in turn implies that $\chi^\star(z)$ has exactly $k_q-1$ roots with absolute value less than or equal to~$1$.  Since $\mu\ne0$,\; $\chi^\star(z)$ has no roots with absolute value equal to~$1$, and thus it has $k_q-1$ roots with absolute value strictly less than~$1$.  Then the remaining $k_p$ roots of $\chi^\star(z)$ have absolute value strictly greater than~$1$.

Now suppose $\mu>0$.  Observe that
\begin{align*}
&\smashoperator{\sum_{j=0}^{k_p-1}}\,\left|\tilde{c}_{ r+1-j}\right|(1-\varepsilon)^{ r+1-j}
+\smashoperator{\sum_{j=k_p+1}^{ r+1}}\,\left|\tilde{c}_{ r+1-j}\right|(1-\varepsilon)^{ r+1-j}\\
&\qquad=(1-\varepsilon)^{k_q}\left(\sum_{j=1}^{k_p}p_{j} (1-\varepsilon)^{j}+\sum_{j=1}^{k_q}q_j (1-\varepsilon)^{-j}\right)\\
&\qquad<(1-\varepsilon)^{k_q}\left[\sum_{j=1}^{k_p}p_{j} \left(1-j\varepsilon+\frac{\varepsilon\mu}{2}\right)+\sum_{j=1}^{k_q}q_j \left(1-j\varepsilon+\frac{\varepsilon\mu}{2}\right)\right]\\
&\qquad=(1-\varepsilon)^{k_q}\left(1-\varepsilon\mu+\frac{\varepsilon\mu}{2}\right)
=(1-\varepsilon)^{k_q}\left(1-\frac{\varepsilon\mu}{2}\right)
<(1-\varepsilon)^{k_q}=\left|\tilde{c}_{k_q}\right|(1-\varepsilon)^{k_q}.
\end{align*}
Then by Rouch\'{e}'s theorem, $\tilde{\chi}(z)$ has exactly $k_q$ zeros with absolute value strictly less than $1-\varepsilon$.  Since $\varepsilon$ was arbitrarily small, it follows that $\tilde{\chi}(z)$ has exactly $k_q$ zeros with absolute value strictly less than~$1$, which in turn implies that $\chi^\star(z)$ has exactly $k_q$ roots with absolute value strictly less than~$1$.  
Since $\mu\ne0$,\; $\chi^\star(z)$ has no roots with absolute value equal to~$1$, and thus the remaining $k_p-1$ roots of $\chi^\star(z)$ have absolute value strictly greater than~$1$.

Finally, suppose $\mu=0$.  For each $\varepsilon$ satisfying $|\varepsilon|<\min\{p_{k_p},q_{k_q},1-p_{k_p},1-q_{k_p}\}$, let $\zeta_1(\varepsilon),\ldots,\zeta_ r(\varepsilon)$ be the roots of the characteristic equation $\chi^\star_\varepsilon(z)$ formed by replacing $p_{k_p}$ with $p_{k_p}+\varepsilon$ and replacing $q_{k_q}$ with $q_{k_q}-\varepsilon$ in the definition of $\chi^\star(z)$.  We may assume without loss of generality that $\zeta_1(\varepsilon)$ is the positive real root and that the other roots are ordered in such a way that $\zeta_j(\varepsilon)$ depends continuously on~$\varepsilon$ for each $j\in\{1,\ldots, r\}$.
(Assigning index~$1$ to the positive real root does not interfere with this continuity. Since the positive real root has multiplicity~$1$, it cannot leave the real line as $\varepsilon$ varies, as this would create a complex root without its complex conjugate as a root.)
Note that if $\varepsilon<0$, then $\#\{j:|\zeta_j(\varepsilon)|<1\}=k_q-1$ and $\#\{j:|\zeta_j(\varepsilon)|>1\}=k_p$.  However, if $\varepsilon>0$, then $\#\{j:|\zeta_j(\varepsilon)|<1\}=k_q$ and $\#\{j:|\zeta_j(\varepsilon)|>1\}=k_p-1$.  Now recall that any $z\ne1$ with $|z|=1$ cannot be a root of $\chi^\star(z)$.  Then for each $j\in\{2,\ldots, r\}$,\; $|\zeta_j(\varepsilon)|\ne1$ for any $\varepsilon$.  Since each $\zeta_j(\varepsilon)$ depends continuously on~$\varepsilon$, we have that for each~$j\in\{2,\ldots, r\}$, either $|\zeta_j(\varepsilon)|<1$ for all $\varepsilon$ or $|\zeta_j(\varepsilon)|>1$ for all $\varepsilon$.  Then it follows that $\#\{j\ge2:|\zeta_j(\varepsilon)|<1\text{ for all }\varepsilon\}=k_q-1$ and $\#\{j\ge2:|\zeta_j(\varepsilon)|>1\text{ for all }\varepsilon\}=k_p-1$, with $\zeta_1(\varepsilon)>0$ for all $\varepsilon<0$ and $\zeta_1(\varepsilon)<0$ for all $\varepsilon>0$.  Again, since $\zeta_1(\varepsilon)$ depends continuously on $\varepsilon$, it follows that $\zeta_1(0)=1$, which establishes the result.
\end{proof}

We now introduce some additional notation to permit a concise statement of the next result.
Recall the definition of $\bm{Y}$ in~(\ref{Y-matrix}).
Let $\bm{\Psi}$ be the matrix with
infinitely many
rows and $k_p$ columns formed by
first extending the matrix~$\bm{Y}$ downward (i.e., as $N\to\infty$) by allowing~(\ref{Y-elements}) to hold for every $i\ge1$, and then retaining only the first~$k_p$ columns of the resulting matrix.
We now provide another definition.
\begin{defn}
For each $i\in\{0,\ldots,N\}$ the $i$th \emph{half-accordion matrix}~$\bm{H}_i$ is the matrix with $k_p$ rows and infinitely many columns given by
\[
\bm{H}_i=\begin{pmatrix}\bm{I}_{k_p-1}&\bm{0}&\bm{0}&\cdots\\ \bm{0}&\bm{1}_i^\T&\bm{0}&\cdots\end{pmatrix}.
\]
\end{defn}
Next, let $\bm{\eta}_\infty$ be the vector of infinite length given by $\bm{\eta}_\infty^\T=(\bm{\eta}^\T\;\;\bm{0}\;\;\cdots)$,
where $\bm{\eta}$ is the same as in~(\ref{eta}).
Finally, for each $i\in\{0,1,\ldots\}$, let $\bm{\Omega}_i$ be the $(k_p+1)\times(k_p+1)$ matrix
\begin{align}
\bm{\Omega}_i=\begin{pmatrix}\hphantom{\bm{\eta}_\infty^\T}\mathllap{\bm{H}_i}\bm{\Psi}&\bm{e}_{k_p-1}-\bm{e}_{k_p}\\ \bm{\eta}_\infty^\T\bm{\Psi} & \sum_{\ell=1}^{k_p}p_\ell-\sum_{\ell=1}^{k_q}\ell q_\ell\end{pmatrix},
\end{align}
where $\bm{e}_{k_p-1}$ and $\bm{e}_{k_p}$ denote the $(k_p-1)$st and $k_p$th unit vectors of length~$k_p$, taking the former to be a zero vector when $k_p=1$.
Then the following result provides the form of the stationary distribution.

\begin{thm}\label{thm:pi-limit}
If $\mu\ge0$, then the one-sided reflecting random leap has no stationary distribution.  If $\mu<0$, then the one-sided reflecting random leap has a stationary distribution $\bm{\pi}=(\pi_0,\pi_1,\ldots)$ given by
\[
\pi_i=\frac{\det(\bm{\Omega}_i)}{\sum_{j=0}^\infty\det(\bm{\Omega}_j)}
\]
for each $i\in\{0,1,\ldots\}$.
\end{thm}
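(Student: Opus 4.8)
The plan is to treat $\mu\ge0$ and $\mu<0$ separately. For $\mu\ge0$ I would show the one-sided reflecting random leap is not positive recurrent, which suffices because an irreducible Markov chain (irreducibility holds by Assumption~\ref{assum:gcd} together with the reflecting structure) that admits a stationary distribution must be positive recurrent. Conditioning on the first step, with probability $p_{k_p}>0$ the chain moves from~$0$ to state~$k_p$, and, since the chain coincides with the non-absorbing simple random leap $\{X_n^\star\}$ until it first hits~$0$, the expected time to return to~$0$ from~$k_p$ equals $E[\min\{n\ge0:X_n^\star\le0\}\mid X_0^\star=k_p]$. Applying optional stopping to the bounded-increment martingale $\{X_n^\star-n\mu:n\ge0\}$ shows this expectation is infinite: a finite value $E[\tau]$ would force the stopped chain to have mean $k_p+\mu\,E[\tau]\ge k_p\ge1$, contradicting the fact that the stopped value lies in $\{-(k_q-1),\ldots,0\}$. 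Hence the expected first return time to~$0$ is infinite, so no stationary distribution exists.

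For $\mu<0$ I would first establish positive recurrence — so that a unique stationary distribution~$\bm\pi$ exists — for instance via Foster's criterion with Lyapunov function $V(i)=i$, whose one-step mean drift equals $\mu<0$ for every $i\ge k_q$ while the increments are bounded, so that only the finite set $\{0,\ldots,k_q-1\}$ must be excluded; alternatively one may exhibit the candidate below and appeal to Lemma~\ref{lem:stationary}. Since $\bm\pi$ is a probability vector, $\pi_i\to0$, hence $d_i:=\pi_i-\pi_{i-1}\to0$. Exactly as in the proof of Theorem~\ref{thm:pi}, a first-step analysis at the interior states — now all states $i\ge k_p$, there being no upper boundary — shows the $d_i$ obey the order-$r$ homogeneous recurrence~(\ref{pi-d-recurrence}) for all $i>r$, whose characteristic polynomial $\psi^\star(z)$ has roots $y_1,\ldots,y_s$. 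By Lemma~\ref{lem:roots-loc}, when $\mu<0$ exactly $k_p$ of these roots (counted with multiplicity) lie strictly inside the unit disk and none lie on it; consequently the only solutions tending to~$0$ are those spanned by the $k_p$ corresponding modes, i.e.\ $\bm d=\bm\Psi\bm b$ for some $\bm b\in\mathbb{C}^{k_p}$, where $\bm\Psi$ is precisely the submatrix of the downward extension of~$\bm Y$ retaining the columns of those inside roots.

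It then remains to cast $\pi_i$ in determinantal form. I would read off the $k_p$ boundary conditions from the stationary equations at states $0,1,\ldots,k_p-1$ exactly as in the derivations of~(\ref{pi-bd-lower}) and~(\ref{pi-bd-zero}) (the upper conditions~(\ref{pi-bd-upper}) being absent here), combine them with the identity $\pi_i-\pi_0=\sum_{j=1}^i d_j$ into a matrix equation for $(\pi_0,\bm b)$, and apply Cramer's rule in the manner of~(\ref{pi-cramer})--(\ref{pi-manipulate}) to obtain $\pi_i\propto\det(\bm\Omega_i)$; the relevant $k_p\times k_p$ coefficient matrix is nonsingular, since singularity would contradict the existence or uniqueness of~$\bm\pi$. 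Finally, because every entry of~$\bm\Omega_i$ involves only powers $y_j^i$ with $|y_j|<1$, the series $\sum_{j\ge0}\det(\bm\Omega_j)$ converges, and normalizing $\sum_{i\ge0}\pi_i=1$ produces the stated formula.

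I expect the main obstacle to be the bookkeeping of the last step: faithfully transcribing the finite-state Cramer's-rule computation of Theorem~\ref{thm:pi} to the half-line, in particular checking that the products $\bm H_i\bm\Psi$ and $\bm\eta_\infty^\T\bm\Psi$ of formally infinite objects are well-defined (the convergence being geometric) and that the boundary conditions line up with the rows of~$\bm\Omega_i$. The other delicate point, which is exactly what generates the dichotomy at $\mu=0$, is the correct application of Lemma~\ref{lem:roots-loc} to pin down which root-modes a summable stationary solution may contain.
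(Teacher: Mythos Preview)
Your proposal is correct and takes a genuinely different route from the paper, particularly for the $\mu<0$ case.

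For $\mu\ge0$, the paper argues by direct contradiction: assuming a stationary $\bm\pi$ exists and letting $X_0\sim\bm\pi$, it computes $E(X_1-X_0)\ge\pi_0\,k_q\,q_{k_q}>0$, contradicting stationarity. Your optional-stopping argument on $X_n^\star-n\mu$ is a valid alternative that instead shows the expected return time to~$0$ is infinite; both are short.

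For $\mu<0$, the approaches diverge substantially. The paper proceeds by \emph{verification}: it first proves recurrence via a SLLN coupling with the unreflected leap, then checks by a lengthy determinantal computation that the sequence $\det(\bm\Omega_i)$ satisfies the stationarity equations~(\ref{condition-stationary}) for every $i\ge0$, then separately bounds $\sum_i|\det(\bm\Omega_i)|$ by another extended calculation, and finally invokes Lemma~\ref{lem:stationary}. You instead \emph{derive} the formula: Foster's criterion gives positive recurrence (hence existence and uniqueness of~$\bm\pi$) in one line; the constraint $\pi_i\to0$ forces the solution of the recurrence~(\ref{pi-d-recurrence}) to lie in the span of the $k_p$ modes with $|y_j|<1$ furnished by Lemma~\ref{lem:roots-loc}; and the $k_p$ lower boundary conditions~(\ref{pi-bd-lower})--(\ref{pi-bd-zero}), combined via Cramer's rule exactly as in~(\ref{pi-cramer})--(\ref{pi-manipulate}), yield $\pi_i\propto\det(\bm\Omega_i)$ directly. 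Summability is then immediate from $|y_j|<1$, avoiding the paper's separate finiteness argument. Your route is shorter and more conceptual, making the role of Lemma~\ref{lem:roots-loc} transparent (it selects the admissible modes rather than merely certifying convergence), at the cost of relying on Foster's criterion and a slightly more delicate linear-independence fact for recurrence solutions; the paper's route is more self-contained but computationally heavier.
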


\begin{proof}
Let $\{X_n:n\ge0\}$ be a one-sided reflecting random leap.  We consider the $\mu\ge0$ and $\mu<0$ cases separately.

First suppose $\mu\ge0$, and suppose that $\bm{\pi}$ is a stationary distribution.  Since the chain is irreducible
by Assumption~\ref{assum:gcd},
we must have $\pi_i>0$ for every $i\ge0$.  Now let $X_0$ have distribution~$\bm{\pi}$.  Then $X_1$ also has distribution~$\bm{\pi}$.  Now note that $|X_1-X_0|\le k$, hence $E(X_1-X_0)$ exists and is finite.  Then
\begin{align*}\SADS
E(X_1-X_0)=E\left[E\left(X_1-X_0\Given X_0\right)\right]
&=\sum_{i=0}^\infty \pi_i\, E\left(X_1-X_0\Given X_0=i\right)\\
&=\smashoperator{\sum_{i=0}^{k_q-1}}\pi_i\left[\mu+\smashoperator{\sum_{\ell=i+1}^{k_q}}(\ell-i)q_\ell\right] +\smashoperator{\sum_{i=k_q}^\infty}\pi_i\,\mu\\
&\ge\left[\sum_{i=0}^{k_q\mathrlap{-1}\;}\pi_i\smashoperator{\sum_{\ell=i+1}^{k_q}}(\ell-i)q_\ell\right]
\ge
\pi_0\,k_q\,q_{k_q}>0.
\end{align*}
Thus, $X_1$ and $X_0$ do not have the same distribution, which is a contradiction since~$\bm{\pi}$ is stationary.  Therefore no stationary distribution exists.


Now suppose instead that $\mu<0$.
The proof for this case consists of three parts.  First, we show that the Markov chain is recurrent.
Second, we show that
\begin{align}
\det(\bm{\Omega}_i)=\sum_{j=0}^\infty \det(\bm{\Omega}_j)\,P\left(X_{n+1}=i\Given X_n=j\right)
\text{ for all }i\in\{0,1,\ldots\}.
\label{condition-stationary}
\end{align}
Third, we establish that
$|\sum_{i=0}^\infty\det(\bm{\Omega}_j)|<\infty$.
Then since the Markov chain is irreducible by Assumption~\ref{assum:gcd}, Lemma~\ref{lem:stationary} implies that
$\det(\bm{\Omega}_i)/\sum_{j=0}^\infty\det(\bm{\Omega}_j)$ is the unique stationary distribution.

First, we prove recurrence of the Markov chain $\{X_n:n\ge0\}$.  Since this chain is irreducible by Assumption~\ref{assum:gcd}, it suffices to show that at least one state is recurrent.
Note that we may express this Markov chain as
\begin{align*}
X_n=X_0+\sum_{i=1}^n\left(X_i-X_{i-1}\right)
=X_0^\star+\sum_{i=1}^n\left(X_i^\star-X_{i-1}^\star+\Delta_i\right)
=X_n^\star+\sum_{i=1}^n\Delta_i,
\end{align*}
where $\{X_n^\star:n\ge0\}$ is a corresponding simple (i.e., non-reflecting) random leap with $X_0^\star=X_0$ and where $\Delta_i$ is
the random variable
\[
\Delta_i=-(X_{i-1}+X_i^\star-X_{i-1}^\star)\;I(X_{i-1}+X_i^\star-X_{i-1}^\star<0).
\]
Observe that $0\le\Delta_i\le k_q\,I(X_{i-1}<k_q)$, which implies that
\begin{align}
0\le X_n-X_n^\star\le k_q\sum_{i=0}^{n-1} I(X_i<k_q).
\label{visits-bound}
\end{align}
Now note that $n^{-1}\sum_{i=1}^n(X_i^\star-X_{i-1}^\star)\to\mu<0$ almost surely as $n\to\infty$ by the strong law of large numbers.  Then $X_n^\star\to-\infty$ almost surely as $n\to\infty$.  However, $X_n\ge0$ for all $n\ge0$, and hence~(\ref{visits-bound}) implies that $\sum_{i=0}^\infty I(X_i<k_q)=\infty$ almost surely.  Thus, $\{X_n:n\ge0\}$ visits the finite set $\{0,\ldots,k_q-1\}$ infinitely often almost surely, from which it follows that at least one state in this finite set is recurrent.

Second, we show condition~(\ref{condition-stationary}).
For every $i\ge1$, define a vector~$\bm{\alpha}_{(i)}$ of length $i+k_q$ with elements
\[
\alpha_{(i),j}=\begin{cases}1&\text{ if }1\le j\le i-k_p,\\
\sum_{\ell=1}^{i-j}p_\ell+\sum_{\ell=1}^{k_q}q_\ell&\text{ if }i-k_p+1\le j\le i,\\
\sum_{\ell=j-i}^{k_q}q_\ell&\text{ if }i+1\le j\le i+k_q,\end{cases}
\]
taking any empty sum to be zero.  (Note that the $1\le j\le i-k_p$ case does not apply when $i\le k_p-1$.)  Now observe that we may rewrite these elements as
\begin{align*}
\alpha_{(i),j}
&=\begin{cases}1&\text{ if }1\le\ell\le i-k_p,\\
1+\gamma_{\ell-i+k-1}&\text{ if }i-k_p+1\le\ell\le i,\\
\gamma_{\ell-i+k-1}&\text{ if }i+1\le\ell\le i+k_q,\end{cases}
\end{align*}
with each $\gamma_j$ as given in Definition~\ref{defn:char-rev}.  Then for every~$i\ge1$,
\begin{align}
&\sum_{j=0}^\infty \det(\bm{\Omega}_j)\,P\left(X_{n+1}=i\Given X_n=j\right) \notag\\
&\qquad=\smashoperator{\sum_{j=1}^{\max\{i,k_p\}}} p_j\,\det(\bm{\Omega}_{i-j})+\sum_{j=1}^{k_q} q_j\,\det(\bm{\Omega}_{i+j}) \notag\\
&\qquad=\begin{vmatrix}
\begin{pmatrix}\bm{I}_{k_p-1}&\bm{0}&\bm{0}&\cdots\\ \bm{0}&\bm{\alpha}_{(i)}^\T&\bm{0}&\cdots\end{pmatrix}\bm{\Psi}&
\begin{matrix}
\tilde{\bm{e}}_{k_p-1}
\\-1+\sum_{j=i+1}^{k_p}p_j\end{matrix}
\\ \bm{\eta}_\infty^\T\bm{\Psi} & \sum_{\ell=1}^{k_p}p_\ell-\sum_{\ell=1}^{k_q}\ell q_\ell
\end{vmatrix},
\label{condition-stationary-intermediate}
\end{align}
where we write $\tilde{\bm{e}}_{k_p-1}$ to denote the $(k_p-1)$st unit vector of length~$k_p-1$, as opposed to length~$k_p$.
Next, for every $i\ge1$, define a vector
\[
\tilde{\bm{\alpha}}_{(i)}=\begin{cases}\bm{0}_{k_p-1}&\text{ if }i\ge k_p,\\
\begin{pmatrix}\bm{0}_{i-1}^\T&\gamma_{k-k_p}&\gamma_{k-k_p+1}&\cdots&\gamma_{k-i-2}&\gamma_{k-i-1}\end{pmatrix}^{
T}&\text{ if }i\le k_p-1.\end{cases}
\]
Now return to~(\ref{condition-stationary-intermediate}).  If $i\le k_p-1$, then add multiples of rows $i,\ldots,k_p-1$ to row~$k_p$ to obtain
\begin{align}\SADS
&\begin{vmatrix}
\begin{pmatrix}\bm{I}_{k_p-1}&\bm{0}&\bm{0}&\cdots\\ \bm{0}&\bm{\alpha}_{(i)}^\T&\bm{0}&\cdots\end{pmatrix}\bm{\Psi}&
\begin{matrix}
\tilde{\bm{e}}_{k_p-1}\\-1+\sum_{j=i+1}^{k_p}p_j\end{matrix}
\\ \bm{\eta}_\infty^\T\bm{\Psi} & \sum_{\ell=1}^{k_p}p_\ell-\sum_{\ell=1}^{k_q}\ell q_\ell
\end{vmatrix}\notag\\
&\qquad=
\begin{vmatrix}
\begin{pmatrix}\bm{I}_{k_p-1}&\bm{0}&\bm{0}&\cdots\\ \tilde{\bm{\alpha}}_{(i)}^\T&\bm{\alpha}_{(i)}^\T&\bm{0}&\cdots\end{pmatrix}\bm{\Psi}&
\begin{matrix}
\tilde{\bm{e}}_{k_p-1}\\-1\end{matrix}
\\ \bm{\eta}_\infty^\T\bm{\Psi} & \sum_{\ell=1}^{k_p}p_\ell-\sum_{\ell=1}^{k_q}\ell q_\ell
\end{vmatrix}.
\label{condition-stationary-almost}
\end{align}
If instead $i\ge k_p$, then~(\ref{condition-stationary-almost}) holds trivially.
Now observe that
\[
\begin{pmatrix}\tilde{\bm{\alpha}}_{(i)}^\T&\bm{\alpha}_{(i)}^\T\end{pmatrix}=\begin{pmatrix}\bm{1}_i^\T&\bm{0}_{k_q}^\T\end{pmatrix}+
\begin{pmatrix}\bm{0}_{i-k_p}^\T&\gamma_{k-k_p}&\gamma_{k-k_p+1}&\cdots&\gamma_{k+k_q-2}&\gamma_{k+k_q-1}\end{pmatrix}.
\]
Then since the columns of $\bm{\Psi}$ satisfy the recurrence relation corresponding to the reverse characteristic polynomial~$\psi(z)$ as defined in Definition~\ref{defn:char-rev}, we have
\[
\begin{pmatrix}\bm{I}_{k_p-1}&\bm{0}&\bm{0}&\cdots\\ \tilde{\bm{\alpha}}_{(i)}^\T&\bm{\alpha}_{(i)}^\T&\bm{0}&\cdots\end{pmatrix}\bm{\Psi}
=\begin{pmatrix}\bm{I}_{k_p-1}&\bm{0}&\bm{0}&\cdots\\ \bm{0}&\bm{1}_i^\T&\bm{0}&\cdots\end{pmatrix}\bm{\Psi}.
\]
Combining this with~(\ref{condition-stationary-intermediate})~and~(\ref{condition-stationary-almost}) immediately yields that condition~(\ref{condition-stationary}) holds for every $i\ge1$.
For $i=0$,
\begin{align}
&\sum_{j=0}^\infty \det(\bm{\Omega}_j)\,P\left(X_{n+1}=i\Given X_n=j\right)\notag\\
&\qquad=\sum_{j=1}^{k_q} q_j\,\det(\bm{\Omega}_0)
+\sum_{j=1}^{k_q}\sum_{\ell=j}^{k_q}q_\ell\,\det(\bm{\Omega}_{j}) \notag\\
&\qquad=\sum_{j=1}^{k_q} q_j
\begin{vmatrix}
\bm{H}_{0}\bm{\Psi}&\bm{e}_{k_p-1}-\bm{e}_{k_p}\\ \bm{\eta}_\infty^\T\bm{\Psi} & \sum_{\ell=1}^{k_p}p_\ell-\sum_{\ell=1}^{k_q}\ell q_\ell
\end{vmatrix}
+\sum_{j=1}^{k_q}\sum_{\ell=j}^{k_q}q_\ell
\begin{vmatrix}
\bm{H}_{j}\bm{\Psi}&\bm{e}_{k_p-1}-\bm{e}_{k_p}\\ \bm{\eta}_\infty^\T\bm{\Psi} & \sum_{\ell=1}^{k_p}p_\ell-\sum_{\ell=1}^{k_q}\ell q_\ell
\end{vmatrix}\notag\\
&\qquad=\begin{vmatrix}
\begin{pmatrix}\bm{I}_{k_p-1}&\bm{0}&\bm{0}&\cdots\\ \bm{0}&\bm{g}^\T&\bm{0}&\cdots\end{pmatrix}\bm{\Psi}& & &
\bm{e}_{k_p-1}-\sum_{\ell=1}^{k_q}(\ell+1)q_\ell\,\bm{e}_{k_p}
\\ \bm{\eta}_\infty^\T\bm{\Psi} & & & \sum_{\ell=1}^{k_p}p_\ell-\sum_{\ell=1}^{k_q}\ell q_\ell
\end{vmatrix},\label{condition-stationary-zero-intermediate}
\end{align}
where $\bm{g}$ is the length-$k_q$ vector with elements $g_j=\sum_{\ell=j}^{k_q}(\ell-j+1)q_\ell$.  Noting that 
$g_j=\eta_{j+k_p-1}$
for each $j\in\{1,\ldots,k_q\}$ and that
\[
\sum_{\ell=1}^{k_q}(\ell+1)q_\ell=1-\sum_{\ell=1}^{k_p}p_\ell+\sum_{\ell=1}^{k_q}\ell q_\ell,
\]
we may rewrite the determinant in~(\ref{condition-stationary-zero-intermediate}) as
\begin{align*}
&\begin{vmatrix}
\begin{pmatrix}\bm{I}_{k_p-1}&\bm{0}&\bm{0}&\cdots\\ \bm{0}&\bm{g}^\T&\bm{0}&\cdots\end{pmatrix}\bm{\Psi}& & &
\bm{e}_{k_p-1}-\sum_{\ell=1}^{k_q}(\ell+1)q_\ell\,\bm{e}_{k_p}
\\ \bm{\eta}_\infty^\T\bm{\Psi} & & & \sum_{\ell=1}^{k_p}p_\ell-\sum_{\ell=1}^{k_q}\ell q_\ell
\end{vmatrix}\\
&\qquad=
\begin{vmatrix}
\begin{pmatrix}\bm{I}_{k_p-1}&\bm{0}&\bm{0}&\cdots\end{pmatrix}\bm{\Psi}& & &
\tilde{\bm{e}}_{k_p-1}\\ 
\bm{\eta}_\infty^\T\bm{\Psi} & & & \sum_{\ell=1}^{k_p}p_\ell-\sum_{\ell=1}^{k_q}\ell q_\ell-1\\
\bm{\eta}_\infty^\T\bm{\Psi} & & & \sum_{\ell=1}^{k_p}p_\ell-\sum_{\ell=1}^{k_q}\ell q_\ell\phantom{{}-1}
\end{vmatrix}
=\det(\bm{\Omega}_0),
\end{align*}
where the last equality is obtained by subtracting the last row from the second-to-last row.  Thus, condition~(\ref{condition-stationary}) holds for~$i=0$, and hence for all~$i\ge0$.

Third and finally, we establish that $|\sum_{i=0}^\infty\det(\bm{\Omega}_j)|<\infty$.
Begin by noting that
\[
\sum_{j=0}^{2k-1}\gamma_j=\smashoperator{\sum_{j=k-k_p}^{k+k_q-1}}\gamma_j=\sum_{\ell=1}^{k_q}\ell q_\ell-\sum_{\ell=1}^{k_p}\ell p_\ell=-\mu>0.
\]
Now let $\breve{\bm{e}}_m$ denote the $m$th unit vector of infinite length.  Then for all~$i\ge 2r+2$,
\begin{align*}
-\mu\begin{pmatrix}\bm{0}_{k_p-1}^\T&\bm{1}^\T_i&\bm{0}&\cdots\end{pmatrix}\bm{\Psi}
&=\sum_{\mathllap{j=k}-k_p}^{\mathllap{k+}k_q-\mathrlap{1}}\sum_{m=\mathrlap{1}\vphantom{k_p}}^{i\vphantom{k_q}}\gamma_j\breve{\bm{e}}_{m+k_p-1}^\T\bm{\Psi}\\
&=\sum_{\mathllap{m}=1\vphantom{k_p}}^{\;\;\mathllap{i-k_p-k_q}+1}\sum_{j=k\mathrlap{-k_p}}^{k+\mathrlap{k_q-1}}\gamma_j\breve{\bm{e}}_{j+m-k+k_p}^\T\bm{\Psi}+\sum_{\mathllap{m}=1\vphantom{k_p}}^{k_q}\sum_{j=k\mathrlap{-k_p+m}}^{k+\mathrlap{k_q-1}}\gamma_j\breve{\bm{e}}_{m+k_p-1}^\T\bm{\Psi}\\
&\qquad-\sum_{\mathllap{m}=1\vphantom{k_p}}^{\mathllap{k_p}-1}\sum_{j=k\mathrlap{-k_p}}^{k-\mathrlap{k_p+m-1}}\gamma_j\breve{\bm{e}}_m^\T\bm{\Psi}
\quad+\quad\sum_{\;\mathllap{m=i-k_p-k_q}+2}^{i}\sum_{j=k\mathrlap{-k_p}}^{k+\mathrlap{k_q+m-i-2}}\gamma_j\breve{\bm{e}}_{m+k_p-1}^\T\bm{\Psi}.
\end{align*}
The first term of this expression is zero since the columns of $\bm{\Psi}$ satisfy the recurrence relation corresponding to $\psi(z)$.  Now let $\tilde{\bm{g}}$ be the vector of length $k_p-1$ with elements $\tilde{g}_j=\sum_{\ell=k_p-j+1}^{k_p}(\ell-k_p+j)p_\ell$, and let $\breve{\bm{g}}$ be the vector of length~$ r$ with elements
\[
\breve{\bm{g}}_j=\begin{cases}-\sum_{\ell=j}^{k_p}(\ell-j+1) p_\ell&\text{ if }1\le j\le k_p,\\ -\sum_{\ell=1}^{k_p}\ell p_\ell+\sum_{\ell=j-k_p}^{k_q}(\ell-j+k_p+1)q_\ell&\text{ if }k_p+1\le j\le  r.\end{cases}
\]
Then
\begin{align*}\SADS
-\mu\begin{pmatrix}\bm{0}_{k_p-1}^\T&\bm{1}^\T_i&\bm{0}&\cdots\end{pmatrix}\bm{\Psi}
=\begin{pmatrix}\tilde{\bm{g}}^\T&\bm{g}^\T&\bm{0}_{i-k_p-2k_q+1}^\T&\breve{\bm{g}}^\T&\bm{0}&\cdots\end{pmatrix}\bm{\Psi},
\end{align*}
and hence
\begin{align*}\SADS
-\mu\det(\bm{\Omega}_i)&=\begin{vmatrix}
\begin{pmatrix}\bm{I}_{k_p-1}&\bm{0}&\bm{0}&\cdots\end{pmatrix}\bm{\Psi}&
\tilde{\bm{e}}_{k_p-1}\\
\begin{pmatrix}\tilde{\bm{g}}^\T&\bm{g}^\T&\bm{0}_{i-k_p-2k_q+1}^\T&\breve{\bm{g}}^\T&\bm{0}&\cdots\end{pmatrix}\bm{\Psi}&\mu\\
\bm{\eta}_\infty^\T\bm{\Psi} & \sum_{\ell=1}^{k_p}p_\ell-\sum_{\ell=1}^{k_q}\ell q_\ell
\end{vmatrix}\\
&=\begin{vmatrix}
\begin{pmatrix}\bm{I}_{k_p-1}&\bm{0}&\bm{0}&\cdots\end{pmatrix}\bm{\Psi}&
\tilde{\bm{e}}_{k_p-1}\\
\begin{pmatrix}\tilde{\bm{g}}^\T&\bm{0}_{i-k_p-k_q+1}^\T&\breve{\bm{g}}^\T&\bm{0}&\cdots\end{pmatrix}\bm{\Psi}&\sum_{\ell=2}^{k_p}(\ell-1)p_\ell\\
\bm{\eta}_\infty^\T\bm{\Psi} & \sum_{\ell=1}^{k_p}p_\ell-\sum_{\ell=1}^{k_q}\ell q_\ell
\end{vmatrix}\\
&=\begin{vmatrix}
\begin{pmatrix}\bm{I}_{k_p-1}&\bm{0}&\bm{0}&\cdots\end{pmatrix}\bm{\Psi}&
\tilde{\bm{e}}_{k_p-1}\\
\begin{pmatrix}\bm{0}_{i-k_q}^\T&\breve{\bm{g}}^\T&\bm{0}&\cdots\end{pmatrix}\bm{\Psi}&0\\
\bm{\eta}_\infty^\T\bm{\Psi} & \sum_{\ell=1}^{k_p}p_\ell-\sum_{\ell=1}^{k_q}\ell q_\ell
\end{vmatrix}.
\end{align*}
Note that the $(k_p,j)$th element of the last matrix above is $\sum_{\ell=1}^{ r}\breve{g}_\ell\Psi_{\ell+i-k_q,\,j}$ if $1\le j\le k_p$ and $0$ if $j=k_p+1$.  Now let $C_{k_p,j}$ denote the cofactor of this element, and note that $C_{k_p,j}$ does not depend on~$i$.
Then
\bas\SADS
-\mu\det(\bm{\Omega}_i)=\sum_{j=1}^{k_p}C_{k_p,j}\sum_{\ell=1}^{ r}\breve{g}_\ell\Psi_{\ell+i-k_q,\,j},
\eas
and hence
\begin{align*}\SADS
-\mu\smashoperator{\sum_{i=2r+2}^{\infty}}\det(\bm{\Omega}_i)&=
\sum_{\mathllap{i=2}r+2}^{\infty}
\sum_{j=1}^{k_p}C_{k_p,j}\sum_{\ell=1}^{ r}\breve{g}_\ell\Psi_{\ell+i-k_q,\,j}\\
&=\sum_{j=1}^{k_p}C_{k_p,j}\sum_{\ell=1}^{ r}\breve{g}_\ell\smashoperator{\sum_{i=2 r+2}^{\infty}}\Psi_{\ell+i-k_q,\,j}.
\end{align*}
Note that the interchange of summation order above is permissible because the triple sum converges absolutely, which is shown below.
Now observe that by Lemma~\ref{lem:roots-loc}, the roots~$y_j$ that appear in~$
\bm{\Psi}$ all have absolute value less than~$1$.
Then
for each $j\in\{1,\ldots,k_p\}$, there exist a
geometrically distributed
random variable~$G_j$ and a nonnegative integer~$r_j$ such that $\sum_{m=k_p-1}^\infty\left|\Psi_{m,j}\right|=E(G_j^{r_j})$.  Define $M=\max_{1\le j\le k_p}E(G_j^{r_j})$, and note that $M<\infty$.  Then
\begin{align*}
\left|\sum_{i=0}^{\infty}\det(\bm{\Omega}_i)\right|
&\le\left|\sum_{i=0}^{2 r+\mathrlap{1}}\det(\bm{\Omega}_i)\right|+
\left|\sum_{i=2\mathrlap{r+2}}^\infty\det(\bm{\Omega}_i)\right|\\
&\le\left|\sum_{i=0}^{2 r+\mathrlap{1}}\det(\bm{\Omega}_i)\right|+
\frac{1}{|\mu|}\sum_{j=1}^{k_p}\,\left|C_{k_p,j}\right|\sum_{\ell=1}^{ r}\,\left|\breve{g}_\ell\right|\smashoperator{\sum_{i=2 r+2}^{\infty}}\,\left|\Psi_{\ell+i-k_q,\,j}\right|\\
&\le\left|\sum_{i=0}^{2 r+\mathrlap{1}}\det(\bm{\Omega}_i)\right|+
\frac{M}{|\mu|}\left(\sum_{j=1}^{k_p}\,\left|C_{k_p,j}\right|\right)\left(\sum_{\ell=1}^{ r}\,\left|\breve{g}_\ell\right|\right)<\infty.
\end{align*}
Thus,
$\det(\bm{\Omega}_i)/\sum_{j=0}^\infty\det(\bm{\Omega}_j)=\pi_i$
is the unique stationary distribution.
\end{proof}

\begin{rem}
An alternative approach to proving Theorem~\ref{thm:pi-limit} is discussed in the Appendix.  This approach relies upon the general technique of showing that if the probabilities associated with the stationary distribution for the finite state space $\{0,\ldots,N\}$ and converge uniformly to some limit as $N\to\infty$, then this limit is the stationary distribution for the infinite state space.  The Appendix rigorously elaborates on this idea.
\end{rem}

We now provide two examples to illustrate the generality of the above results.

\begin{ex}
Consider a one-sided reflecting random walk with $\mu=2p-1<0$.  In this case, the polynomial $\psi^\star(z)=(1-p)z-p$ has root $y=p/(1-p)<1$.  Then $\bm{H}_i=(\bm{1}_i^\T\;\;\bm{0}\;\;\cdots)$,\; $\bm{\Psi}=(y,y^2,\ldots)$,\; $\bm{\eta}_\infty^\T=(1-p\;\;\bm{0}\;\;\cdots)$, and $\smash{\sum_{\ell=1}^{k_p}p_\ell-\sum_{\ell=1}^{k_q}\ell q_\ell=2p-1}$.  Thus,
\begin{align*}
\det(\bm{\Omega}_i)=\begin{vmatrix}\sum_{j=1}^i y^j&-1\\ (1-p)y&2p-1\end{vmatrix}
=(2p-1)\sum_{j=1}^i y^j+(1-p)y
&=\frac{(2p-1)y(1-y^i)}{1-y}+(1-p)y\\
&=p\left(\frac{p}{1-p}\right)^i,
\end{align*}
and
\[
\sum_{j=0}^\infty\det(\bm{\Omega}_j)=p\left(1-\frac{p}{1-p}\right)^{-1}
=\frac{p(1-p)}{1-2p}<\infty.
\]
Then the stationary distribution is given by
\[
\pi_i=\frac{1-2p}{1-p}\left(\frac{p}{1-p}\right)^i=\left(1-\frac{p}{1-p}\right)\left(\frac{p}{1-p}\right)^i
\]
for each $i\in\{0,1,\ldots\}$, which agrees with the standard result \citep[e.g.,][]{feller1968,karlin1998}.
\end{ex}

\begin{ex}
Consider a one-sided reflecting random jump with $k_p=k_q=2$ and $\mu=2p_2+p_1-q_1-2q_2<0$.  In this case, the polynomial $\psi^\star(z)=p_2z^3+(p_1+p_2)z^2-(q_1+q_2)z-q_2$\\
\newpage
\noindent has three roots, exactly two of which have absolute value less than~$1$.  Let $y_1$ and $y_2$ denote these two roots.  Then
\bas\SADS
\bm{H}_i=\begin{pmatrix}1&\bm{0}&\bm{0}&\cdots\\0&\bm{1}_i^\T&\bm{0}&\cdots\end{pmatrix},\qquad
\bm{\Psi}=\begin{pmatrix}1&1\\y_1&y_2\\y_1^2&y_2^2\\ \vdots&\vdots\end{pmatrix},\qquad
\bm{\eta}_\infty=\begin{pmatrix}q_1+2q_2\\q_2\\ \bm{0}\\ \vdots\end{pmatrix},
\eas
and $\sum_{\ell=1}^{k_p}p_\ell-\sum_{\ell=1}^{k_q}\ell q_\ell=p_1+p_2-q_1-2q_2=\mu-p_2$.  Thus,
\begin{align*}
\det(\bm{\Omega}_i)
&=\begin{vmatrix}1&1&1\\ \sum_{j=1}^i y_1^j&\sum_{j=1}^i y_2^j&-1\\(q_1+2q_2)y_1+q_2y_1^2&(q_1+2q_2)y_2+q_2y_2^2&\mu-p_2\end{vmatrix}\\
&=\begin{vmatrix}1&1&1\\ (y_1-y_1^{i+1})/(1-y_1)&(y_2-y_2^{i+1})/(1-y_2)&-1\\(q_1+2q_2)y_1+q_2y_1^2&(q_1+2q_2)y_2+q_2y_2^2&\mu-p_2\end{vmatrix}\\
&=\begin{vmatrix}1-y_1&1-y_2&1\\ y_1-y_1^{i+1}&y_2-y_2^{i+1}&-1\\
-q_2y_1^3-(q_1+q_2)y_1^2+(q_1+2q_2)y_1&-q_2y_2^3-(q_1+q_2)y_2^2+(q_1+2q_2)y_2&\mu-p_2\end{vmatrix}\\
&\qquad\times\frac{1}{(1-y_1)(1-y_2)}\\
&=\begin{vmatrix}1-y_1&1-y_2&1\\ y_1-y_1^{i+1}&y_2-y_2^{i+1}&-1\\
-(p_1+p_2)y_1-p_2+(q_1+2q_2)y_1&-(p_1+p_2)y_1-p_2+(q_1+2q_2)y_2&\mu-p_2\end{vmatrix}\\
&\qquad\times\frac{1}{(1-y_1)(1-y_2)},
\end{align*}
noting that $q_2y_1^3+(q_1+q_2)y_1^2-(p_1+p_2)y_1-p_2=0$ and $q_2y_2^3+(q_1+q_2)y_2^2-(p_1+p_2)y_2-p_2=0$ since $y_1$ and $y_2$ are both roots of the recurrence relation.  Continuing, we have
\begin{align*}
\det(\bm{\Omega}_i)
&=\frac{1}{(1-y_1)(1-y_2)}
\begin{vmatrix}1-y_1&1-y_2&1\\ y_1-y_1^{i+1}&y_2-y_2^{i+1}&-1\\
p_2(y_1-1)-\mu y_1&p_2(y_2-1)-\mu y_2&\mu-p_2\end{vmatrix}\\
&=\frac{1}{(1-y_1)(1-y_2)}
\begin{vmatrix}1-y_1&1-y_2&1\\ 1-y_1^{i+1}&1-y_2^{i+1}&0\\
p_2(y_1-1)-\mu y_1&p_2(y_2-1)-\mu y_2&\mu-p_2\end{vmatrix}\\
&=\frac{1}{(1-y_1)(1-y_2)}
\begin{vmatrix}1-y_1&1-y_2&1\\ 1-y_1^{i+1}&1-y_2^{i+1}&0\\
-\mu y_1&-\mu y_2&\mu\end{vmatrix}\\
&=\frac{\mu}{(1-y_1)(1-y_2)}
\begin{vmatrix}1&1&0\\ 1-y_1^{i+1}&1-y_2^{i+1}&0\\
-y_1&-y_2&1\end{vmatrix}\\
&=\frac{\mu\left[(1-y_2^{i+1})-(1-y_1^{i+1})\right]}{(1-y_1)(1-y_2)}
=\frac{\mu\left(y_1^{i+1}-y_2^{i+1}\right)}{(1-y_1)(1-y_2)}.
\end{align*}
Then
\begin{align*}\SADS
\sum_{j=0}^\infty\det(\bm{\Omega}_j)
&=\frac{\mu}{(1-y_1)(1-y_2)}\sum_{j=0}^\infty\left(y_1^{j+1}-y_2^{j+1}\right)\\
&=\frac{\mu}{(1-y_1)(1-y_2)}\left(\frac{y_1}{1-y_1}-\frac{y_2}{1-y_2}\right)
=\frac{\mu(y_1-y_2)}{(1-y_1)^2(1-y_2)^2}
<\infty,
\end{align*}
and thus the stationary distribution is given by
\[
\pi_i=\frac{(1-y_1)(1-y_2)}{y_1-y_2}\left(y_1^{i+1}-y_2^{i+1}\right)
\]
for each $i\in\{0,1,\ldots\}$.
\end{ex}
\section{%
An Application 
of Simple Random Leap Theory}
\label{sec:example}

As an example, we consider the casino game of roulette.  In standard American roulette, bets are placed and settled according to the eventual resting place of a ball spun around a wheel with $38$ spaces (numbered $1$--$36$, with additional spaces marked $0$ and $00$).  A wide variety of bets are possible, but for our purposes we consider only the so-called column bet, in which the gambler essentially bets that the result of the spin will be a positive number congruent to $0$, $1$, or $2$ modulo~$3$.  (Note, however, that a bet on $\{3,6,9,\ldots,36\}$ does \emph{not} win on $0$ or~$00$.)  A column bet pays at $2$-to-$1$, i.e. a gambler who bets \$1 will either lose this \$1 or win \$2.  However, consider a gambler who sometimes bets \$1 on a single column and sometimes bets \$1 on each of two columns simultaneously, making each decision with probability $1/2$.  Then on each spin of the wheel, the gambler's winnings in dollars (where negative numbers indicate losses) will be either $-2$, $-1$, $+1$, or $+2$, and these results will occur with probabilities $7/38$, $13/38$, $12/38$, and $6/38$, respectively.  Suppose this gambler enters the casino with $i$ dollars and will leave upon either going broke or reaching at least $N$ dollars.  Also, to maintain the structure of the absorbing simple random leap, suppose that if the gambler has \$1 remaining but wishes to place two bets, he or she will borrow \$1 from a friend, which will be repaid immediately after a win but not after a loss (since after a loss, the gambler will have no money with which to repay the debt).
Then the results of Theorems~\ref{thm:u}~and~\ref{thm:v} may be applied to find the probability that the gambler reaches the target~$N$, as well as the expected number of spins until the gambler reaches the target or leaves the casino.  These quantities are shown in Tables~\ref{tab:u}~and~\ref{tab:v} for each $N\in\{5,10,15,20,25\}$ and each $i\in\{0,\ldots,N\}$.

\section{Conclusion}
\label{sec:concl}

In this paper, we have extended some basic properties of the
classical
simple random walk to a simple random leap with maximum step size $\pm k$ in each direction.  Specifically, we have used the theory of linear recurrence relations to write the absorption probabilities~$u_i$ and expected absorption times~$v_i$ as determinants of matrices involving powers of the roots of certain characteristic polynomials.  Furthermore, we provided elementary proofs of the necessary and sufficient condition for recurrence of the simple random leap on~$\mathbb{Z}$, as well as of the nonexistence of a stationary distribution.

\section*{Acknowledgments}
\label{sec:ack}

The authors wish to thank Apoorva Khare for his insight and generous assistance.
This work was partially funded by the US National Science Foundation under grants
DMS-CMG-1025465, AGS-1003823, DMS-1106642, DMS-CAREER-1352656,
and the US Air Force Office
of Scientific Research grant award FA9550-13-1-0043.

\bibliographystyle{ims}
\bibliography{references}

\appendix
\section{Appendix}

An alternative proof of Theorem~\ref{thm:pi-limit} relies upon a technique that is in fact more general.  Specifically, consider a sequence of irreducible Markov chains $\{\{\smash{X_n^{(N)}}:n\ge0\}:N\ge N_0\}$ such that the state space of $\{\smash{X_n^{(N)}}:n\ge0\}$ is $\{0,\ldots,N\}$ for each $N\ge N_0$.  Let $\bm{P}^{(N)}$ denote the associated transition matrices, and suppose that there exists a positive integer~$k$ such that $\smash{P^{(N)}_{ij}=0}$ for all~$N$ if $|i-j|>k$.  Let $\bm{\pi}^{(N)}$ refer to the stationary distribution associated with each transition matrix~$\bm{P}^{(N)}$.  Then, let $\tilde{\bm{\pi}}^{(N)}$ and $\tilde{\bm{P}}^{(N)}$ denote the natural extensions of $\bm{\pi}^{(N)}$ and $\bm{P}^{(N)}$ to the state space $\{0,1,\ldots\}$, i.e.,
\[
\tilde{\bm{\pi}}^{(N)}=\begin{pmatrix}\bm{\pi}^{(N)}\\0\\0\\ \vdots\end{pmatrix},
\qquad
\tilde{\bm{P}}^{(N)}=\begin{pmatrix}
\bm{P}^{(N)}&\bm{0}&\bm{0}&\cdots\\
\bm{0}&1&0&\cdots\\
\bm{0}&0&1&\cdots\\
\vdots&\vdots&\vdots&\ddots\end{pmatrix}.
\]
Finally, let $\bm{P}$ be a matrix such that for each $i\in\{0,1,\ldots\}$,
$P_{ij}=\smash{\tilde{P}_{ij}^{(N)}}$ for all~$j$ if $N\ge i+k$.
Then we have the following result.

\begin{thm}\label{thm:sd-extend}
If $\bm{\pi}=(\pi_0\;\;\pi_1\;\;\cdots)^\T$ is a distribution such that
$\smash{\tilde{\pi}_i^{(N)}}\to\pi_i$ uniformly in $i$ as $N\to\infty$, then $\bm{\pi}\bm{P}=\bm{\pi}$.
\end{thm}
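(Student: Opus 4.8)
The plan is to reduce the infinite-dimensional identity $\bm{\pi}\bm{P}=\bm{\pi}$ to a coordinatewise passage to the limit in the finite-dimensional stationarity relations $\bm{\pi}^{(N)}\bm{P}^{(N)}=\bm{\pi}^{(N)}$, using the uniform bandwidth~$k$ to keep every relevant sum finite. Fix a coordinate $j\in\{0,1,\ldots\}$; it suffices to show $(\bm{\pi}\bm{P})_j=\pi_j$.

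I would first note two easy facts. (i)~Each extended vector $\tilde{\bm{\pi}}^{(N)}$ is stationary for $\tilde{\bm{P}}^{(N)}$: for an index $\le N$ this is just the corresponding component of $\bm{\pi}^{(N)}\bm{P}^{(N)}=\bm{\pi}^{(N)}$, while for an index $>N$ both sides vanish, since $\tilde{\bm{\pi}}^{(N)}$ places no mass above~$N$ and the states above~$N$ are unreachable from $\{0,\ldots,N\}$ under $\tilde{\bm{P}}^{(N)}$. (ii)~Since $P^{(N)}_{i\ell}=0$ whenever $|i-\ell|>k$, the same is true of $\tilde{\bm{P}}^{(N)}$ and hence of $\bm{P}$, so the sum defining $(\bm{\pi}\bm{P})_j$ has at most the $2k+1$ terms with $\max\{0,j-k\}\le i\le j+k$. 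Next, take any $N\ge j+2k$. Every index $i$ in that range satisfies $i\le j+k\le N-k$, so $N\ge i+k$ and therefore $P_{ij}=\tilde{P}^{(N)}_{ij}$ by the defining property of~$\bm{P}$; moreover $j\le N$, so $\tilde{\pi}^{(N)}_j=\pi^{(N)}_j$. Stationarity of $\tilde{\bm{\pi}}^{(N)}$ from~(i) then reads
\[
\tilde{\pi}^{(N)}_j=\sum_{i=\max\{0,j-k\}}^{j+k}\tilde{\pi}^{(N)}_i\,P_{ij}.
\]

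Finally I would let $N\to\infty$ in this identity. The left-hand side tends to $\pi_j$ by hypothesis, and the right-hand side is a sum of a fixed, finite number of terms, each tending to $\pi_i P_{ij}$; hence it tends to $\sum_i\pi_i P_{ij}=(\bm{\pi}\bm{P})_j$, giving $(\bm{\pi}\bm{P})_j=\pi_j$. Since $j$ was arbitrary, $\bm{\pi}\bm{P}=\bm{\pi}$. There is no serious obstacle here; the only points needing care are checking that the extension $\tilde{\bm{\pi}}^{(N)}$ stays stationary across the artificial boundary at~$N$ and making the finite-sum reduction explicit, so that the limit requires only termwise convergence of the $\tilde{\pi}^{(N)}_i$. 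In particular the uniformity hypothesis is stronger than what this argument uses, but it is the form that is convenient to verify in applications such as Theorem~\ref{thm:pi-limit}.
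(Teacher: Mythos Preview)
Your proof is correct. Both your argument and the paper's rest on the same two facts---the finite bandwidth (so $(\bm{\pi}\bm{P})_j$ involves at most $2k+1$ terms) and the stationarity of the extended vectors $\tilde{\bm{\pi}}^{(N)}$---but the implementations differ. You fix $j$, wait until $N\ge j+2k$ so that the relevant entries of $\tilde{\bm{P}}^{(N)}$ already coincide with those of $\bm{P}$, and pass to the limit termwise in a finite sum; this needs only pointwise convergence of $\tilde{\pi}^{(N)}_i$ (and does not even use that $\bm{\pi}$ is a distribution). The paper instead fixes $\varepsilon$, chooses a single $M$---via the uniform convergence together with $\pi_i\to0$---that works for every $j$ at once, and bounds $|\pi_j-(\bm{\pi}\bm{P})_j|$ by a three-term triangle inequality, one term of which absorbs the discrepancy $\tilde{P}^{(M)}_{ij}\ne P_{ij}$ that can occur for $j$ near the upper boundary of $\{0,\ldots,M\}$. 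Your route is shorter and, as you correctly observe, shows that the uniformity hypothesis is stronger than necessary for the conclusion as stated; the paper's proof genuinely uses the uniformity, but only because of how the argument is organized rather than out of any essential need.
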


\begin{proof}
Let $\varepsilon>0$.  Since $\bm{\pi}$ is a distribution, $\lim_{i\to\infty}\pi_i=0$.  Then by this fact and the condition of the theorem, there exists $M$ large enough so that both $|\smash{\tilde{\pi}_i^{(M)}}-\pi_i|<\varepsilon/(4k+4)$ for every $i\ge0$ and
$\smash{\tilde{\pi}_i^{(M)}}<\varepsilon/(4k+2)$ for every $i>M-k$.  Then for every $j\ge0$,
\begin{align*}
\left|\pi_j-\sum_{i=0}^\infty\pi_iP_{ij}\right|
&=\left|\pi_j-\smashoperator{\sum_{i=\max\{0,j-k\}}^{j+k}}\pi_iP_{ij}\right|\\
&\le\left|\pi_j-\smashoperator{\sum_{i=\max\{0,j-k\}}^{j+k}}\tilde{\pi}_i^{(M)}\tilde{P}_{ij}^{(M)}\right|
+\left|\sum_{i=\mathrlap{\max\{0,j-k\}}\;\;}^{j+k}\tilde{\pi}_i^{(M)}\left(\tilde{P}_{ij}^{(M)}-P_{ij}\right)\right|\\
&\qquad+\left|\sum_{i=\mathrlap{\max\{0,j-k\}}\;\;}^{j+k}\left(\tilde{\pi}_i^{(M)}-\pi_i\right)P_{ij}\right|\\
&
\le\left|\pi_j-\tilde{\pi}_j^{(M)}\right|
+(2k+1)\sup_{i>M-k}\tilde{\pi}_i^{(M)}+(2k+1)\sup_{i\ge0}\left|\tilde{\pi}_i^{(M)}-\pi_i\right|\\
&\le(2k+1)\frac{\varepsilon}{4k+2}+(2k+2)\frac{\varepsilon}{4k+4}=\varepsilon,
\end{align*}
noting that $\smash{\tilde{P}^{(M)}_{ij}}\ne P_{ij}$ only if $i>M-k$.
\end{proof}

This theorem may be applied to the reflecting random leap as follows.  Denote the vector~$\bm{\pi}$ and the matrices~$\bm{W}_i$ in Theorem~\ref{thm:pi} as $\bm{\pi}^{(N)}$ and $\bm{W}_i^{(N)}$ to explicitly indicate their dependence on~$N$.  Then let $\tilde{\bm{\pi}}^{(N)}$ denote the natural extension of $\bm{\pi}^{(N)}$ to the state space~$\{0,1,\ldots\}$.

\begin{cor}
If $\bm{\pi}=(\pi_0,\pi_1,\ldots)$ is a distribution such that
\[
\tilde{\pi}_i^{(N)}=\frac{\det\!\left[\bm{W}_i^{(N)}\right]}{\sum_{j=0}^N\det\!\left[\bm{W}_j^{(N)}\right]}\to\pi_i
\]
uniformly in $i$ as $N\to\infty$, then $\bm{\pi}$ is the stationary distribution of the one-sided reflecting random leap.
\end{cor}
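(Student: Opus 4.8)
The plan is to deduce the corollary from Theorem~\ref{thm:sd-extend} once the relevant chains have been placed in its abstract framework. For each integer $N\ge k_p+k_q$, take $\{X_n^{(N)}:n\ge0\}$ to be the (two-sided) reflecting random leap of Definition~\ref{defn:reflect-finite} on the state space $\{0,\ldots,N\}$, with transition matrix $\bm{P}^{(N)}$ and stationary distribution $\bm{\pi}^{(N)}$. Each $\{X_n^{(N)}:n\ge0\}$ is irreducible by Assumption~\ref{assum:gcd}, and $P^{(N)}_{ij}=0$ whenever $|i-j|>k$, so the structural hypotheses imposed on the $\bm{P}^{(N)}$ in Theorem~\ref{thm:sd-extend} hold. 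Let $\bm{P}$ be the transition matrix of the one-sided reflecting random leap of Definition~\ref{defn:reflect-infinite}, and let $\tilde{\bm{\pi}}^{(N)}$ and $\tilde{\bm{P}}^{(N)}$ be the natural extensions of $\bm{\pi}^{(N)}$ and $\bm{P}^{(N)}$ to $\{0,1,\ldots\}$ as in the Appendix.

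The one genuinely required verification is that $P_{ij}=\tilde{P}^{(N)}_{ij}$ for every $j$ whenever $N\ge i+k$, so that $\bm{P}$ is precisely the matrix appearing in the hypotheses of Theorem~\ref{thm:sd-extend}. First I would carry out a direct comparison of Definitions~\ref{defn:reflect-finite} and~\ref{defn:reflect-infinite}: when $N\ge i+k\ge i+k_p$, state $i$ is far enough from the upper barrier that the finite chain cannot ``leap past'' $N$ in a single step from $i$ (the event $\{X^\star_{n+1}\ge N\}$ has probability zero when $i+k_p<N$, and in the boundary case $i+k_p=N$ it coincides with $\{X^\star_{n+1}=N\}$), so from state $i$ both chains send probability $P(X^\star_{n+1}=j\mid X^\star_n=i)$ to each $j\ge1$ and probability $P(X^\star_{n+1}\le0\mid X^\star_n=i)$ to $j=0$. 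Hence $\bm{P}$ is well-defined and agrees with the one-sided transition matrix.

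With this in place, Theorem~\ref{thm:pi} identifies $\pi^{(N)}_i=\det(\bm{W}^{(N)}_i)/\sum_{j=0}^N\det(\bm{W}^{(N)}_j)$, so the hypothesis of the corollary says exactly that $\tilde{\pi}^{(N)}_i\to\pi_i$ uniformly in $i$ as $N\to\infty$. Theorem~\ref{thm:sd-extend} then yields $\bm{\pi}\bm{P}=\bm{\pi}$, and since $\bm{\pi}$ is assumed to be a probability distribution it is a stationary distribution for the one-sided reflecting random leap; because that chain is irreducible (Assumption~\ref{assum:gcd}) it has at most one stationary distribution, so $\bm{\pi}$ is its stationary distribution. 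I expect no serious obstacle: the argument is essentially a bookkeeping check that the finite reflecting leaps and the one-sided reflecting leap fit the framework of Theorem~\ref{thm:sd-extend}, the only mildly delicate point being the edge case $i+k_p=N$ in the transition-probability comparison above.
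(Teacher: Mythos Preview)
Your proposal is correct and follows the same approach as the paper: the paper's entire proof is the single sentence ``This result is a direct application of Theorem~\ref{thm:sd-extend},'' and you have simply spelled out the verifications (irreducibility, bounded steps, and the agreement $P_{ij}=\tilde P^{(N)}_{ij}$ for $N\ge i+k$) that the paper leaves implicit. Your added remark on uniqueness via irreducibility is a reasonable bonus, since the corollary's statement uses the definite article.
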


\begin{proof}
This result is a direct application of Theorem~\ref{thm:sd-extend}.
\end{proof}

Thus, if the stationary distribution for the two-sided reflecting random leap on $\{0,\ldots,N\}$ in Theorem~\ref{thm:pi} can be shown to converge uniformly to some distribution $\bm{\pi}$ on $\{0,1,\ldots\}$, then this limiting distribution is the stationary distribution of the corresponding one-sided reflecting random leap.


\begin{table}[p]
\begin{center}
\begin{tabular}{c@{\hspace{3em}}rrrrr}\toprule
\multirow{2}{*}{$i$}&\multicolumn{5}{c}{\phantom{.0}$N$\hspace*{6pt}}\\\cmidrule{2-6}
&5\phantom0&10\phantom0&15\phantom0&20\phantom0&25\phantom0\\ \midrule
1&.1978&.0829&.0447&.0266&.0166\\
2&.3541&.1490&.0803&.0477&.0269\\
3&.5445&.2272&.1223&.0727&.0455\\
4&.7252&.3098&.1668&.0992&.0620\\
5&1.0000&.3996&.2152&.1280&.0800\\\midrule[0.2\lightrulewidth]
6&&.4968&.2674&.1590&.0995\\
7&&.6010&.3239&.1926&.1205\\
8&&.7170&.3850&.2289&.1432\\
9&&.8297&.4510&.2682&.1677\\
10&&1.0000&.5224&.3106&.1943\\\midrule[0.2\lightrulewidth]
11&&&.5997&.3565&.2230\\
12&&&.6826&.4061&.2540\\
13&&&.7749&.4598&.2876\\
14&&&.8645&.5178&.3238\\
15&&&1.0000&.5805&.3631\\\midrule[0.2\lightrulewidth]
16&&&&.6484&.4055\\
17&&&&.7212&.4513\\
18&&&&.8023&.5009\\
19&&&&.8810&.5545\\
20&&&&1.0000&.6124\\\midrule[0.2\lightrulewidth]
21&&&&&.6751\\
22&&&&&.7424\\
23&&&&&.8173\\
24&&&&&.8900\\
25&&&&&1.0000\\\bottomrule
\end{tabular}
\end{center}
\caption{Probability~$u_i$ that the gambler
described in
Section~\ref{sec:example}
with
an initial stake of $i$~dollars reaches at least $N$ dollars before going broke.}
\label{tab:u}
\end{table}


\begin{table}[p]
\begin{center}
\begin{tabular}{c@{\hspace{3em}}rrrrr}\toprule
\multirow{2}{*}{$i$}&\multicolumn{5}{c}{\phantom{.0}$N$\hspace*{6pt}}\\\cmidrule{2-6}
&5\phantom{00}&10\phantom{00}&15\phantom{00}&20\phantom{00}&25\phantom{00}\\ \midrule
1&2.6764&5.3096&7.5377&9.3944&10.9165\\
2&3.5075&8.2208&12.2248&15.5612&18.2963\\
3&3.6019&10.8523&16.9560&22.0422&26.2118\\
4&\phantom{00}2.8784&\phantom{0}12.5231&\phantom{0}20.8465&\phantom{0}27.7819&\phantom{0}33.4674\\
5&.0000&13.3454&24.0806&33.0273&40.3618\\\midrule[0.2\lightrulewidth]
6&&13.1845&26.5333&37.6511&46.7655\\
7&&12.0244&28.1613&41.6276&52.6672\\
8&&9.5941&28.8924&44.8972&58.0180\\
9&&6.4972&28.6540&47.4045&62.7755\\
10&&.0000&27.3721&49.0870&66.8909\\\midrule[0.2\lightrulewidth]
11&&&24.9425&49.8782&70.3123\\
12&&&21.3471&49.7053&72.9833\\
13&&&16.2060&48.4906&74.8429\\
14&&&10.4765&46.1478&75.8255\\
15&&&.0000&42.5925&75.8596\\\midrule[0.2\lightrulewidth]
16&&&&37.6988&74.8685\\
17&&&&31.4618&72.7687\\
18&&&&23.3795&69.4709\\
19&&&&14.7939&64.8754\\
20&&&&.0000&58.8861\\\midrule[0.2\lightrulewidth]
21&&&&&51.3547\\
22&&&&&42.2899\\
23&&&&&31.0589\\
24&&&&&19.4157\\
25&&&&&.0000\\\bottomrule
\end{tabular}
\end{center}
\caption{Expected number of spins~$v_i$ on which the gambler
described in
Section~\ref{sec:example}
with
an initial stake of $i$~dollars reaches at least $N$~dollars or goes broke.}
\label{tab:v}
\end{table}

\end{document}